\newtheorem{thm}{Theorem}[section]
\newtheorem*{thm*}{Theorem}
\newtheorem{cor}[thm]{Corollary}
\newtheorem{lem}[thm]{Lemma}
\newtheorem{prop}[thm]{Proposition}
\newtheorem*{prop*}{Proposition}
\newtheorem*{conj*}{Conjecture}
\newtheorem*{dfn*}{Definition}
\theoremstyle{definition}
\newtheorem{rem}[thm]{\textbf{Remark}}
\newtheorem*{rmk*}{Remark}
\newtheorem*{fact*}{Fact}
\theoremstyle{proof}
\newcommand{\norm}[1]{\left\Vert#1\right\Vert}
\newcommand{\abs}[1]{\left\vert#1\right\vert}
\newcommand{\set}[1]{\left\{#1\right\}}
\newcommand{\brac}[1]{\left(#1\right)}
\newcommand{\scalar}[1]{\left \langle #1 \right \rangle}
\newcommand{\Real}{\mathbb{R}}
\def \RR {\mathbb R}
\def \PP {\mathbb P}
\def \EE {\mathbb E}
\newcommand{\vr}{\textrm{V.Rad.}}
\newcommand{\detcov}{\textrm{det Cov}}
\newcommand{\cov}{\textrm{Cov}}
\newcommand{\diam}{\textrm{diam}}
\newcommand{\Hess}{\textrm{Hess}}
\newlength{\defbaselineskip}
\newcommand{\setlinespacing}[1]%
           {\setlength{\baselineskip}{#1 \defbaselineskip}}
\newcommand{\trcov}{\textrm{trace Cov}}
\newcommand{\vol}{\textrm{Vol}_n}
\numberwithin{equation}{section}
\begin{document}

\title{Centroid Bodies and the Logarithmic Laplace Transform - \\ A Unified Approach}
%\markright{Short Title}
\author{Bo'az Klartag\textsuperscript{1} and Emanuel Milman\textsuperscript{2}}
%\date{March 8, 2009}

\footnotetext[1]{School of Mathematical Sciences, Tel-Aviv
University, Tel Aviv 69978, Israel. Supported in part by the Israel
Science Foundation and by a Marie Curie Reintegration Grant from the
Commission of the European Communities. Email: klartagb@tau.ac.il}

\footnotetext[2]{Department of Mathematics,
Technion - Israel Institute of Technology, Haifa 32000, Israel. Supported by ISF, GIF and the Taub Foundation (Landau Fellow).
Email: emilman@tx.technion.ac.il.}

\date{}

\maketitle \abstract{We unify and slightly improve several bounds on
the isotropic constant of high-dimensional convex bodies; in
particular, a linear dependence on the body's $\psi_2$ constant is
obtained. Along the way, we present some new bounds on the volume of
$L_p$-centroid bodies and yet another equivalent formulation of Bourgain's
hyperplane conjecture. Our method is a combination of the
$L_p$-centroid body technique of Paouris and the logarithmic Laplace
transform technique of the first named author. }

\section{Introduction}

This work combines two recent techniques in the study of volumes of
high-dimensional convex bodies. The first technique is due to
G. Paouris \cite{Paouris-IsotropicTail}, and it relies on properties of the $L_p$-centroid
bodies. The second technique was developed by the first named author
\cite{quarter}, and it uses the logarithmic Laplace transform.

\medskip

Suppose that $\mu$ is a Borel probability measure on $\RR^n$ endowed with a Euclidean structure $\abs{\cdot} = \sqrt{\scalar{\cdot,\cdot}}$. We say that $\mu$ is a $\psi_\alpha$-measure ($\alpha > 0$) with constant
$b_{\alpha}$ if:
% for any $\theta \in \RR^n$:
%$$
%\int_{\RR^n} \exp\left( \left( \frac{\left|\scalar{x, \theta} \right|}{b_{\alpha} \sigma_{\theta}}  \right)^{\alpha}  \right) d \mu(x) \leq 2
%$$
%where $\sigma^2_{\theta} = \int_{\RR^n} (\scalar{x,\theta})^2 d \mu(x)$,
%and $\scalar{u, v}$ stands for the standard scalar product
%between $u, v \in \RR^n$.
\begin{equation} \label{eq:psi-alpha}
 \brac{\int_{\Real^n} \abs{\scalar{x,\theta}}^p d\mu(x)}^{\frac{1}{p}} \leq b_\alpha p^{\frac{1}{\alpha}} \brac{\int_{\Real^n} \abs{\scalar{x,\theta}}^2 d\mu(x)}^{\frac{1}{2}} \;\;\; \forall p \geq 2 \;\;\; \forall \theta \in \Real^n ~.
\end{equation}
It is well-known that the uniform probability measure $\mu_K$ on any
convex body $K \subset \RR^n$ is a $\psi_1$-measure with constant
$C$, where $C
> 0$ is a universal constant (this follows from Berwald's inequality \cite{ber}, see also \cite{MilmanPajor}).
Here, as usual, a convex body in $\RR^n$
means a compact, convex set with a non-empty interior. The isotropic
constant $L_K$ of a convex body $K \subset \RR^n$ is the following affine invariant parameter:
$$
L_K := \vol(K)^{-\frac{1}{n}} \left( \detcov(\mu_K) \right)^{\frac{1}{2n}} ~,
$$
%where $\mu_K$ is the uniform probability measure on $K$, and
where $\vol$ denotes Lebesgue measure and $\cov(\mu_k)$ denotes the covariance matrix of $\mu_K$. The next theorem unifies and slightly improves
several known bounds on the isotropic constant.

\begin{thm} \label{main_thm}
Let $K \subset \RR^n$ denote a convex body whose barycenter lies at the origin, and
suppose that $\mu_K$ is a $\psi_{\alpha}$-measure ($1 \leq \alpha \leq 2$)
with constant $b_{\alpha}$. Then:
\[
L_K \leq C \sqrt{b_\alpha^\alpha n^{1-\alpha/2}} ~,
\]
where $C > 0$ is a universal constant.
\end{thm}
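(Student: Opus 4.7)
\emph{Proof proposal.} The plan is to combine the two techniques highlighted in the introduction: Paouris's $L_p$-centroid body method and the logarithmic Laplace transform of \cite{quarter}. I would proceed in three steps.

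\emph{Step 1: Reformulation.} By affine invariance of $L_K$, I may put $K$ in isotropic position so that $\vol(K) = 1$, $\cov(\mu_K) = L_K^2 \cdot \mathrm{Id}$, and consequently $Z_2(\mu_K) = L_K B_2^n$. In this position, hypothesis \eqref{eq:psi-alpha} becomes the $L_p$-centroid body inclusion
\[
Z_p(\mu_K) \subseteq b_\alpha\, p^{1/\alpha}\, L_K\, B_2^n, \qquad p \geq 2,
\]
which yields in particular the volume estimate $\vol(Z_p(\mu_K))^{1/n} \leq C b_\alpha p^{1/\alpha} L_K/\sqrt{n}$ after comparison with the Euclidean ball.

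\emph{Step 2: Bound on the log-Laplace transform.} Set
\[
\tilde\Lambda(\xi) := \log \int e^{\scalar{\xi,x}}\, d\mu_K(x),
\]
a convex function satisfying $\tilde\Lambda(0) = 0$, $\nabla\tilde\Lambda(0) = 0$, and $\Hess\tilde\Lambda(0) = L_K^2 \cdot \mathrm{Id}$. Taylor-expanding the exponential and substituting the moment estimates from \eqref{eq:psi-alpha} (namely $\int \scalar{\xi,x}^p\, d\mu_K \leq (b_\alpha p^{1/\alpha} L_K |\xi|)^p$), followed by Stirling's formula, yields (for $\alpha \in (1,2]$) a sub-$\alpha'$ growth of the form
\[
\tilde\Lambda(\xi) \leq C_\alpha\, (b_\alpha L_K |\xi|)^{\alpha/(\alpha-1)}.
\]
The boundary case $\alpha = 1$ (always available by Berwald's inequality for log-concave measures) would be treated separately, with $|\xi|$ localized to ensure convergence of the series.

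\emph{Step 3: From log-Laplace to $L_K$.} Taking the Legendre dual of the bound from Step~2 gives the pointwise lower bound $\tilde\Lambda^*(y) \geq c_\alpha (|y|/(b_\alpha L_K))^\alpha$. The log-Laplace framework of \cite{quarter} then converts such an estimate on $\tilde\Lambda^*$ into a control on $L_K$ through an integral identity relating $L_K^n$ to $\int e^{-\tilde\Lambda^*(y)}\, dy$ (equivalently, to volumes of the level sets $\{\tilde\Lambda \leq t\}$). Substituting the pointwise bound and evaluating the Gaussian-like integral $\int e^{-c_\alpha (|y|/(b_\alpha L_K))^\alpha}\, dy \sim (b_\alpha L_K)^n\, n^{n(1/\alpha - 1/2)}$, and matching this with the $L_p$-centroid body volume estimate from Step~1 at the critical scale $p \sim n$ (where $Z_n(\mu_K)$ is comparable to $K$ itself), the dimension-counting yields
\[
L_K^2 \leq C\, b_\alpha^\alpha\, n^{1-\alpha/2}.
\]

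\emph{Main obstacle.} The principal difficulty lies in Step~3: identifying the correct form of Klartag's log-Laplace integral identity and matching it against the Paouris-style $L_p$-centroid body volume estimates so that the dimension counting produces exactly the exponent $n^{1-\alpha/2}$, rather than a weaker $\sqrt{n}$ or $n$. A careful calibration of the scale $p$ in the $L_p$-centroid bodies and of the range of $|\xi|$ in the sub-$\alpha'$ bound on $\tilde\Lambda$ is essential, and this interplay is precisely where the combination of the two methods — each insufficient on its own — has to be made to work.
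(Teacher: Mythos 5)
Your Step~1 reduction is fine, and the sub-$\alpha'$ estimate in Step~2 is a correct computation; by Lemma~\ref{lem:Lambda-Zp} it is just a restatement of the hypothesis $Z_p(\mu_K)\subseteq b_\alpha p^{1/\alpha}L_K B_n$, namely that $\Lambda_p(\mu_K)\supseteq c\,p^{1-1/\alpha}(b_\alpha L_K)^{-1}B_n$. The gap is in Step~3, and it is not a matter of calibration: the identity you invoke goes in the wrong direction. An \emph{upper} bound on $\tilde\Lambda$ yields a \emph{lower} bound on its level sets, and hence (via Lemmas~\ref{lem:Lambda-Zp},~\ref{lem:Zn} and Bourgain--Milman, which give $\vr(\Lambda_n(\mu_K))\simeq\sqrt{n}\,\norm{\mu_K}_{L_\infty}^{1/n}$) only a \emph{lower} bound on $L_K$, namely $L_K\gtrsim n^{1/2-1/\alpha}/b_\alpha$, which is trivial for $\alpha\in[1,2]$. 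Likewise, for $K$ normalized so that $\vol(K)=1$, one has $\int e^{-\tilde\Lambda^*}\simeq 1$ up to $e^{\pm Cn}$ (by the Laplace method $\tilde\Lambda^*$ is within $O(n)$ of the potential of $\mu_K$), so the integral carries no information about $L_K$. The same issue affects your ``matching at scale $p\sim n$'': $Z_n(\mu_K)\simeq\mathrm{conv}(K,-K)$ combined with $Z_n\subseteq b_\alpha n^{1/\alpha}L_K B_n$ again produces only $1\lesssim b_\alpha n^{1/\alpha-1/2}L_K$.

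What the argument needs, and what the paper supplies, is a \emph{lower} bound on $\det\Hess\tilde\Lambda(x)=\detcov(\mu_x)$ for the tilted measures $\mu_x$ at points $x\in\tfrac12\Lambda_p(\mu)$. Lemma~\ref{lem:F-vr} (applied via Lemma~\ref{lem_low}) together with Bourgain--Milman gives $\vr(Z_p(\mu))\gtrsim\sqrt{p}\sqrt{\Psi_p}$ where $\Psi_p$ is a geometric average of $\detcov(\mu_x)$; lower-bounding $\Psi_p$ yields $\vr(Z_p(\mu))\gtrsim\sqrt{p}\,\vr(Z_2(\mu))$ (Theorem~\ref{main_thm2}), whence the desired upper bound on $L_K$ follows by a short Rogers--Shephard comparison as in the paper's reduction of Theorem~\ref{main_thm} to Theorem~\ref{main_thm2}. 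The crucial tool for the lower bound on $\detcov(\mu_x)$ is Proposition~\ref{prop:Zp-iso}, stating that the tilt preserves the $L_p$-centroid body, $Z_p(\mu_x)\simeq Z_p(\mu)$ for $x\in\tfrac12\Lambda_p(\mu)$, combined with the flag-of-eigenspace-projections argument of Propositions~\ref{prop:det-basic-est} and~\ref{prop:Her}, which uses the Paouris-type parameter $q^\sharp$ and the Litvak--Milman--Schechtman theory to control $\vr(Proj_E Z_k)$ in the right range of dimensions. None of this machinery appears in your outline, and the pointwise $\psi_\alpha$-to-Laplace translation you propose cannot replace it.
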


A central question raised by Bourgain \cite{bou_amer, bou_cong} is
whether $L_K \leq C$ for some universal constant $C>0$, for any
convex body $K \subset \RR^n$ (it is well-known that $L_K \geq c$
for a universal constant $c > 0$). This question is usually referred
to as the {\it slicing problem} or {\it hyperplane conjecture}, see
Milman and Pajor \cite{MilmanPajor} for many of its equivalent
formulations and for further background. Plugging $\alpha = 1$ in
Theorem \ref{main_thm}, we match the best known bound on the
isotropic constant, which is $L_K \leq C n^{1/4}$ for any convex
body $K \subset \RR^n$ (see Bourgain \cite{bou_L} and Klartag
\cite{quarter}). In the case $\alpha = 2$, Theorem \ref{main_thm}
yields $L_K \leq C b_2$. This slightly improves upon the previously
known bound, which is:
\begin{equation}
 L_K \leq C b_2 \sqrt{\log b_2} ~, \label{eq_2307}
\end{equation}
due to  Dafnis and Paouris \cite{DP} in the precise form
(\ref{eq_2307}) and to Bourgain \cite{bou_psi2} (with a different
power of the logarithmic factor). Here, as elsewhere in this text,
we use the letters $c, \tilde{c}, C, \tilde{C}, \bar{C}$ etc. to
denote positive universal constants,  whose value may not
necessarily be the same in different occurrences.

\medskip

We proceed by recalling the definition of the $L_p$-centroid bodies
$Z_p(\mu)$, originally introduced by E. Lutwak and G. Zhang in
\cite{LutwakZhang-IntroduceLqCentroidBodies} (under different
normalization), which lie at the heart of Paouris' remarkable work
\cite{Paouris-IsotropicTail}. Given a Borel probability measure
$\mu$ on $\RR^n$ and $p \geq 1$, denote:
$$
h_{Z_p(\mu)}(\theta) = \left( \int_{\RR^n} \abs{\scalar{x,\theta}}^p d\mu(x) \right)^{\frac{1}{p}} \quad , \quad \theta \in \RR^n ~ .
$$
The function $h_{Z_p(\mu)}$ is a norm on $\RR^n$, and it is the
supporting functional of a convex body $Z_p(\mu) \subseteq \RR^n$
(see e.g. Schneider \cite{Schneider-Book} for information on
supporting functionals). Clearly $Z_p(\mu) \subseteq Z_q(\mu) $ for
$p \leq q$.

Now suppose that $K \subset \RR^n$ is a convex body whose barycenter
lies at the origin, and denote $Z_p(K) = Z_p(\mu_K)$, where $\mu_K$
is as before the uniform probability measure on $K$. As realized by
Paouris, obtaining volumetric and other information on $Z_p(K)$ is
very useful for understanding the volumetric properties of $K$
itself. For instance, note that:
\begin{equation} \label{eq:vr-z2}
\vr(Z_2(K)) = \detcov(\mu_K)^{\frac{1}{2n}} ~,
\end{equation}
where the volume-radius of a compact set $T \subset \RR^n$ is
defined as:
$$
\vr(T) = \left( \frac{\vol(T)}{\vol(B_n)} \right)^{\frac{1}{n}} ~,
$$
measuring the radius of the Euclidean ball whose volume equals the
volume of $T$. Here, $B_n = \{ x \in \RR^n ; |x| \leq 1 \}$; 
note that $c n^{-\frac{1}{2}} \leq \vol(B_n)^{\frac{1}{n}} \leq C
n^{-\frac{1}{2}}$, as verified by direct calculation. Furthermore,
it is known (e.g. \cite[Lemma 3.6]{PaourisSmallBall}) that:
\begin{equation} \label{eq:ZnK}
c \cdot Z_\infty(K) \subseteq Z_n(K) \subseteq Z_\infty(K) := conv(K, -K) ~,
\end{equation}
where $conv(K, -K)$ denotes the convex hull of $K$ and $-K$.

A sharp \emph{lower} bound on the volume of $Z_p(K)$ due to Lutwak,
Yang and Zhang \cite{LYZ} states that ellipsoids minimize
$\vr(Z_p(K)) / \vr(K)$ among all convex bodies $K$, for all $p \geq
1$. An elementary calculation yields:
\begin{equation} \label{eq:LYZ-bound}
 \vr( Z_p(K) ) \geq c \sqrt{\frac{p}{n}} \vr(K) \quad \quad
\quad \text{for} \ 1 \leq p \leq n ~,
\end{equation}
which is best possible (up to the value of the constant $c>0$) in terms of $\vol(K)$.
%for any convex body $K \subset \RR^n$. In fact, it was shown in \cite{LYZ} that $K = B_n$ minimizes $\vr(Z_p(K)) / \vr(K)$ for all $p \geq 1$, and so (\ref{eq:LYZ-bound}) is the best possible lower bound for $\vr(Z_p(K))$ in terms of $Vol_n(K)$.
However, in view of the slicing problem and (\ref{eq:vr-z2}), one may try to strengthen (\ref{eq:LYZ-bound}) by replacing its right-hand side
by $c \sqrt{p} \vr(Z_2(K))$.
%it makes sense to try and replace
The next two theorems are a step in this direction.
Before formulating the results, we first broaden our scope.

\medskip

It was realized by K. Ball \cite{BallPhD,Ball-kdim-sections} that
many questions regarding the volume of convex bodies are better
formulated in the broader class of logarithmically-concave measures.
A function $\rho: \RR^n \rightarrow [0, \infty)$ is called
log-concave if $ -\log \rho: \RR^n \rightarrow (-\infty, \infty] $
is a convex function. A probability measure on $\RR^n$ is
log-concave if its density is log-concave. For example, the uniform
probability measure on a convex body and its marginals are all
log-concave measures (see Borell \cite{Borell-logconcave} for a
characterization).

%In view of the slicing problem, it makes sense to try and replace the right hand side of (\ref{eq:LYZ-bound}) by $c \sqrt{p} \vr(Z_2(K))$. The following theorem is a partial step in this direction.

\begin{thm}  \label{main_thm2}
Let $\mu$ be a log-concave probability measure on $\Real^n$ with barycenter at the origin.
Let $1 \leq \alpha \leq 2$, and assume that $\mu$ is a $\psi_\alpha$-measure with constant $b_\alpha$. Then:
\[
\vr(Z_p(\mu)) \geq c \sqrt{p} \vr(Z_2(\mu))~,
\]
for all $2 \leq p \leq C n^{\alpha/2} / b_\alpha^\alpha$. Here $c,C >
0$ denote universal constants.
\end{thm}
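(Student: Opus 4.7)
The plan is to compare $Z_p(\mu)$ to a level set of the Legendre transform of the logarithmic Laplace transform of $\mu$, and then estimate the volume of that level set via the change of variables afforded by $\nabla\Lambda$. This combines Klartag's log-Laplace technique with Paouris' centroid body framework, as foreshadowed in the introduction.

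After applying a linear transformation I would assume $\cov(\mu) = I$; then $\vr(Z_2(\mu)) = 1$ and the target becomes $\vol(Z_p(\mu))^{1/n} \geq c\sqrt{p/n}$. Introduce
$$\Lambda(\xi) = \log \int_{\RR^n} e^{\sscalar{x,\xi}} d\mu(x), \qquad \Lambda^*(x) = \sup_\xi \brac{\sscalar{x,\xi} - \Lambda(\xi)},$$
and for $p \geq 2$ set $B_p := \set{x : \Lambda^*(x) \leq p}$, a convex body containing the origin. The first step is to verify the inclusion $B_p \subseteq C Z_p(\mu)$, valid for log-concave $\mu$ centered at the origin; this is standard (it follows by applying Chebyshev's inequality to the exponential tilts of $\mu$). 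Consequently, it suffices to lower-bound $\vol(B_p)$.

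For this, I would use that $\nabla\Lambda$ is a diffeomorphism from $\RR^n$ onto the interior of $\mathrm{conv}(\mathrm{supp}\,\mu)$, with $\nabla\Lambda(\xi)$ equal to the barycenter of the tilted measure $d\mu_\xi \propto e^{\sscalar{x,\xi}} d\mu$ and $\Hess\Lambda(\xi) = \cov(\mu_\xi)$. The Fenchel identity $\Lambda^*(\nabla\Lambda(\xi)) = \sscalar{\xi,\nabla\Lambda(\xi)} - \Lambda(\xi)$ identifies $B_p$ as the $\nabla\Lambda$-image of $A_p := \set{\xi : \sscalar{\xi,\nabla\Lambda(\xi)} - \Lambda(\xi) \leq p}$, so the change-of-variables formula yields
$$\vol(B_p) = \int_{A_p} \detcov(\mu_\xi) \, d\xi.$$

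The technical heart of the argument, and the main obstacle, is to exhibit a Euclidean ball of radius $r \sim \sqrt{p}$ contained in $A_p$ on which moreover $\detcov(\mu_\xi)^{1/n} \geq c$. Both requirements say that the tilt $\mu_\xi$ stays approximately isotropic for $|\xi| \lesssim \sqrt{p}$, which follows from one-dimensional moment estimates controlled by the $\psi_\alpha$ hypothesis on $\mu$. Pinpointing the scale on which the $\psi_\alpha$-tail contribution to $\Lambda(\xi)$ is still dominated by the Gaussian leading term $|\xi|^2/2$ is precisely what yields the stated range $p \leq C n^{\alpha/2}/b_\alpha^\alpha$. Once such a ball is secured, combining $r^n \vol(B_n) \gtrsim (cp/n)^{n/2}$ with the pointwise lower bound on $\detcov(\mu_\xi)$ closes the argument.
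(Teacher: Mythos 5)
Your overall framework is correct and matches the paper's: pass to the logarithmic Laplace transform, compare level sets of $\Lambda$ (or its Legendre transform $\Lambda^*$) with $Z_p(\mu)$, and reduce to estimating $\int_{A_p} \detcov(\mu_\xi)\,d\xi$ via the change of variables $x = \nabla\Lambda(\xi)$ with Jacobian $\det\Hess\Lambda(\xi) = \detcov(\mu_\xi)$. The inclusion $B_p \subseteq C Z_p(\mu)$ and the Fenchel identity are fine, and this is the same starting point as the paper's Lemmas \ref{lem:Lambda-Zp} and \ref{lem:F-vr} (the paper works dually with level sets of $\Lambda$ and uses Blaschke--Santal\'o and Bourgain--Milman rather than a direct lower bound on $\vol(B_p)$, but these are interchangeable reformulations).

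However, there is a genuine gap at what you call the ``technical heart.'' First, a structural issue: for $\alpha < 2$ the set $A_p$ (equivalently $\frac{1}{2}\Lambda_p(\mu)$) need \emph{not} contain a Euclidean ball of radius $\sim\sqrt{p}$; the $\psi_\alpha$ assumption only guarantees $\Lambda_p(\mu) \supseteq c\,\frac{p^{1-1/\alpha}}{b_\alpha} B_n$, which for $\alpha = 1$ is a fixed-radius ball independent of $p$. That is precisely why the paper replaces the inscribed-ball argument with the Santal\'o/Bourgain--Milman duality in Lemma \ref{lem:F-vr}, which works with the \emph{geometric average} $\Psi_p$ of $\detcov(\mu_x)$ over $\frac{1}{2}\Lambda_p(\mu)$ rather than with an inscribed ball.

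The more serious gap is your assertion that $\detcov(\mu_\xi)^{1/n} \geq c$ on the relevant set ``follows from one-dimensional moment estimates controlled by the $\psi_\alpha$ hypothesis.'' It does not. The $\psi_\alpha$ moments of $\mu$ control the size of $Z_p(\mu)$ and of $\Lambda$ itself, but they give no direct lower bound on $\Hess\Lambda(\xi) = \cov(\mu_\xi)$ for $\xi \neq 0$: tilting a uniform measure on a convex body pushes mass toward the boundary, and the covariance of $\mu_\xi$ can genuinely degenerate in the $\xi$-direction. Obtaining the needed lower bound on $\detcov(\mu_\xi)^{1/2n}$ is the actual content of Sections \ref{sec4} and \ref{sec5} of the paper. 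The crucial input there is not a moment estimate but a stability statement (Proposition \ref{prop:Zp-iso}: $Z_p(\mu_x) \simeq Z_p(\mu)$ for $x \in \frac{1}{2}\Lambda_p(\mu)$) combined with a projection argument over the flag of eigenspaces of $\cov(\mu_x)$ (Propositions \ref{prop:det-basic-est} and \ref{prop:Her}), which in turn hinges on the Dvoretzky-type theorem of Milman--Schechtman and the Litvak--Milman--Schechtman moment comparison, encapsulated in the parameter $q^\sharp(\mu)$. This chain is where the restriction $p \leq C n^{\alpha/2}/b_\alpha^\alpha$ actually enters (through $q^\sharp_H(\mu) \gtrsim n^{\alpha/2}/b_\alpha^\alpha$), not through a balance between ``$\psi_\alpha$ tails'' and a ``Gaussian leading term $|\xi|^2/2$'' in $\Lambda$. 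To complete your argument you would need to supply this projection machinery or an equivalent substitute for it; as written, the key step is asserted rather than proved.
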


Theorem \ref{main_thm} follows immediately from Theorem
\ref{main_thm2}. Indeed, simply observe that for $p$ in the specified range:
\[
c \sqrt{p} \leq \frac{\vr(Z_p(K))}{\vr(Z_2(K))} \leq
\frac{\vr{(conv(K,-K))}}{\vr(Z_2(K)} \leq C \sqrt{n}
\frac{\vol(K)^{1/n}}{\vr(Z_2(K))} =  \frac{C \sqrt{n}}{L_K} ~,
\]
where the last inequality follows from the Rogers-Shephard
inequality \cite{RS}. This completes the proof of Theorem
\ref{main_thm}, reducing it to that of Theorem \ref{main_thm2}. We
remark here that the proof (of both theorems) only requires that the
$\psi_\alpha$ condition (\ref{eq:psi-alpha}) hold for $p \geq 2$
so that $\diam(Z_p(\mu)) \leq c \sqrt{n}$, and only in an average sense (see Subsection \ref{subsec:generalizations}).

\medskip

Our next theorem contains an additional lower bound on the volume of
$Z_p(\mu)$ which complements that of Theorem \ref{main_thm2} in some sense. A
Borel probability measure $\mu$ on $(\RR^n,\abs{\cdot})$ is called isotropic when
its barycenter lies at the origin, and its covariance matrix equals
the identity matrix (i.e. $Z_2(\mu) = B_n$). Any measure with finite
second moments and full-dimensional support may be brought into
isotropic ``position" by means of an affine transformation.

\begin{thm} \label{main_thm3}
Let $\mu$ be an \emph{isotropic} log-concave probability measure on
$\Real^n$. Then:
\[
\vr(Z_p(\mu)) \geq c \sqrt{p} ~,
\]
for all $p \geq 2$ for which:
\begin{equation} \label{eq_0034}
\diam(Z_p(\mu)) \sqrt{\log p} \leq C \sqrt{n} ~.
\end{equation}
Here, $\diam(T) = \sup_{x, y \in T} |x-y|$ stands for the diameter
of $T \subset \RR^n$, and $c,C > 0$ are universal constants.
\end{thm}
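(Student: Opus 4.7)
My plan is to adapt the logarithmic Laplace transform technique underlying the proof of Theorem \ref{main_thm2}, replacing the $\psi_\alpha$ hypothesis by a direct diameter control. The starting point is the convex function
\[
\Lambda(\xi) := \log \int_{\Real^n} e^{\scalar{x,\xi}} d\mu(x), \qquad \xi \in \Real^n,
\]
which satisfies $\Hess \Lambda(0) = \cov(\mu) = \textrm{Id}$ by isotropy. I would set $T_p := \set{\xi \in \Real^n : \Lambda(\xi) \leq p}$ and first recall (or rederive) the lower bound $\vr(T_p) \geq c\sqrt{p}$ for isotropic log-concave $\mu$, which is one of the central volumetric estimates in the spirit of \cite{quarter}. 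The body of the argument will be to prove the inclusion $c \cdot T_p \subseteq Z_p(\mu)$ for a universal constant $c > 0$; combining the two yields $\vr(Z_p(\mu)) \geq c\sqrt{p}$, which is the desired conclusion.

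To establish the inclusion I would argue that, for every unit vector $\theta \in \Real^n$ and every $t > 0$ with $\Lambda(t\theta) \leq p$, one has $h_{Z_p(\mu)}(t\theta) \leq C t$ for a universal $C > 0$. The passage from the exponential moments defining $\Lambda$ to the $p$-th moments defining $h_{Z_p(\mu)}$ would go through a bulk-tail decomposition
\[
\int \abs{\scalar{x,\theta}}^p d\mu(x) = \int_{\abs{\scalar{x,\theta}} \leq R} \abs{\scalar{x,\theta}}^p d\mu(x) + \int_{\abs{\scalar{x,\theta}} > R} \abs{\scalar{x,\theta}}^p d\mu(x),
\]
where $R$ is tuned to be of order $\diam(Z_p(\mu))$. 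The bulk term is at most $R^p$, while the tail is bounded via the elementary pointwise inequality $y^p \leq (p/(e\lambda))^p e^{\lambda y}$ together with the exponential integrability of $\scalar{x,\theta}$ supplied by $\Lambda(\lambda\theta)$. Choosing $\lambda$ just below the boundary of $T_p$ in the direction $\theta$ converts the tail contribution into essentially $(C p)^p$, so that $h_{Z_p(\mu)}(\theta)^p \lesssim R^p + (Cp)^p$.

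The delicate step --- and the main obstacle --- is the careful calibration of the threshold $R$ together with $\lambda$, so that the bulk and tail contributions balance to give $h_{Z_p(\mu)}(\theta) \lesssim \sqrt{p}/\sqrt{n}$ after the normalization appropriate for the volume-radius. The factor $\sqrt{\log p}$ appearing in the hypothesis (\ref{eq_0034}) arises precisely at this stage: the pointwise inequality used for the tail is sharp only at $y \sim p/\lambda$, and pushing the truncation down to the diameter level forces a logarithmic loss. I anticipate that tracking this loss rigorously will require a layer-cake refinement of the tail estimate and a careful matching of constants; once this is done, the asserted inclusion $c\cdot T_p \subseteq Z_p(\mu)$ follows, and with it the lower bound $\vr(Z_p(\mu)) \geq c\sqrt{p}$.
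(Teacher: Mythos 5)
Your plan has a fundamental geometric error that makes the whole approach collapse before reaching the technical steps. You propose to prove the inclusion $c\,T_p \subseteq Z_p(\mu)$ and combine it with $\vr(T_p) \geq c\sqrt{p}$. But the level set $T_p = \{\Lambda_\mu \leq p\}$ is essentially the \emph{polar} of $Z_p(\mu)$, not (a scaled copy of) $Z_p(\mu)$ itself: Lemma \ref{lem:Lambda-Zp} gives $\Lambda_p(\mu) \simeq p\, Z_p(\mu)^\circ$, where $\Lambda_p(\mu) = T_p \cap (-T_p)$. Concretely, the claimed inclusion fails already for $\mu$ uniform on the isotropic cube $[-\sqrt{3},\sqrt{3}]^n$: in a coordinate direction $e_1$, $h_{Z_p(\mu)}(e_1) \leq \sqrt{3}$ for all $p$, so $Z_p(\mu)$ has radius $\lesssim 1$ there, while $\Lambda_\mu(t e_1) = \log(\sinh(\sqrt{3}t)/(\sqrt{3}t)) \sim \sqrt{3}\,t$, so $T_p$ reaches out to $t \simeq p$. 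Hence $T_p \not\subseteq C Z_p(\mu)$ for any bounded $C$. Your intermediate step — deriving $h_{Z_p(\mu)}(t\theta) \leq Ct$ for $t\theta \in T_p$, i.e.\ $h_{Z_p(\mu)}(\theta) \leq C$ — is an \emph{upper} bound on the support function (hence on $Z_p(\mu)$), which is logically the opposite of the containment $c\,T_p \subseteq Z_p(\mu)$ you need (that would require a lower bound on the \emph{radial} function of $Z_p(\mu)$, a very different quantity for non-round bodies).

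Moreover, even if the duality is used correctly, the sign of the volume estimate you recall is the wrong one. Since $T_p$ is essentially $p\,Z_p(\mu)^\circ$, a \emph{lower} bound $\vr(T_p) \gtrsim \sqrt{p}$ is, via the Blaschke--Santal\'o inequality, the same thing as the \emph{upper} bound $\vr(Z_p(\mu)) \lesssim \sqrt{p}$ of Paouris. What is needed for the theorem is the reverse: an \emph{upper} bound on $\vr(\Lambda_p(\mu))$ of the form $2\sqrt{p}/\sqrt{\Psi_p}$, obtained through the gradient-map/Blaschke--Santal\'o argument of Lemma \ref{lem:F-vr}, where $\Psi_p$ averages $\det\Hess\Lambda_\mu(x) = \detcov(\mu_x)$ over tilts $x$. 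Bourgain--Milman then converts this into the desired lower bound on $\vr(Z_p(\mu))$, \emph{provided} one can bound $\detcov(\mu_x)$ from below. That last step is where the actual content lies: one uses $Z_p(\mu_x) \simeq Z_p(\mu)$ (Proposition \ref{prop:Zp-iso}), projects onto the eigenspaces of $\cov(\mu_x)$ corresponding to small eigenvalues, and invokes the $q^\sharp$-parameter and Dvoretzky-type results of Paouris. It is precisely there — in the exponent $k_0/n$ with $k_0 \simeq \diam^2(Z_p(\mu))$ and the resulting factor $(1/p)^{k_0/n}$ — that the $\sqrt{\log p}$ in hypothesis (\ref{eq_0034}) appears, not from any moment-truncation calibration. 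None of this machinery (tilts, projections, $q^\sharp$) is present in your outline, and it cannot be replaced by a direct bulk-tail estimate on one-dimensional marginals.
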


Note that the $\psi_\alpha$-condition (\ref{eq:psi-alpha}) is
precisely the requirement that $Z_p(\mu) \subseteq b_\alpha
p^{\frac{1}{\alpha}} Z_2(\mu)$ for all $p \geq 2$, and so the
conclusion of Theorem \ref{main_thm3} agrees with that of Theorem
\ref{main_thm2}, up to the logarithmic factor in (\ref{eq_0034}).
This discrepancy is explained by the fact that in Theorem
\ref{main_thm2}, we actually make full use of the growth of
$\diam(Z_p(\mu))$ for all $p \geq 2$, whereas in Theorem
\ref{main_thm3} we only assumed this control for the end value of
$p$. We emphasize that this constitutes a genuine difference in
assumptions, and that the logarithmic factor in (\ref{eq_0034}) is
not just a mere technicality: we show in Section \ref{sec_counter}
that removing this factor is actually \emph{equivalent} to
Bourgain's original hyperplane conjecture.

%it is actually related to deep problems in convex geometry. Specifically, this logarithmic factor may be eliminated if
%and only if Bourgain's hyperplane conjecture is true. This is explained in Section \ref{sec_counter} below.
We find condition (\ref{eq_0034}) quite interesting from other
respects as well. It is very much related to Paouris' parameter
$q^*(\mu)$, to be discussed in Section \ref{sec4}. In fact, we show
there that the parameter:
\[
q^{\#}(\mu) := \sup \set{q \geq 1 ; \diam(Z_q(\mu)) \leq c^\sharp \sqrt{n} \detcov(\mu)^\frac{1}{2n}} ~,
\]
for a small-enough universal constant $c^\sharp > 0$, is essentially
equivalent to and has the same functionality as Paouris' $q^*(\mu)$
parameter, in addition to being rather convenient to work with.
%has roughly the same functionality and order of magnitude as Paouris $q^*(\mu)$, and is quite convenient to work with.

\medskip The lower bounds in Theorem \ref{main_thm2} and Theorem
\ref{main_thm3} compare with the matching \emph{upper} bounds on
$\vr(Z_p(\mu))$, obtained by Paouris \cite[Theorem 6.2]{Paouris-IsotropicTail}, which are valid for
\emph{all} $2 \leq p \leq n$:
\begin{equation} \label{eq:vr-upper}
\vr(Z_p(\mu)) \leq C \sqrt{p} \vr(Z_2(\mu)) ~.
\end{equation}
This implies that the lower bounds in both theorems above are sharp, up to
constants, and so the only pertinent question is the optimality of the range of $p$'s for which their conclusion is valid.
In this direction, Paouris obtained a partial converse to (\ref{eq:vr-upper}) in the following range of $p$'s:
\begin{equation}
W(Z_p(\mu)) \geq c \sqrt{p} \vr(Z_2(\mu)) \quad \quad \forall 2 \leq p \leq q^\#(\mu)
~. \label{eq_152}
\end{equation}
Here $W(K) = \int_{S^{n-1}} h_K(\theta) d\sigma(\theta)$ denotes half the mean width of $K$,
$\sigma$ is the Haar probability measure on the Euclidean unit sphere
$S^{n-1}$, and $h_K(\theta) = \sup_{x \in K} \scalar{x,\theta}$ is the
supporting functional of $K$. Note that according to the Urysohn inequality, $W(K) \geq \vr(K)$ (see
e.g. \cite{Milman-Schechtman-Book}), and so Theorem \ref{main_thm3} should be thought of as a formal strengthening of (\ref{eq_152}), if it
were not for the logarithmic factor in (\ref{eq_0034}).

\medskip

The rest of this work is organized as follows. We begin with some
more or less known preliminaries in Section \ref{sec2}. In Section
\ref{sec3}, we deduce a new formula for $\vr(Z_p(\mu))$ involving
the ``tilts" of the measure $\mu$ from \cite{quarter, K_psi2}, and
we relate between the $Z_p$-bodies of the original measure and its
tilts. In Section \ref{sec4}, we deviate from our discussion to
review Paouris' $q^*$-parameter, and compare it with  $q^\sharp$;
this section may be read independently of this work. In Section
\ref{sec5}, we use projections and the $q^\sharp$-parameter to
relate between the determinant of the covariance matrix of $\mu$ and
its tilts, and conclude the proofs of Theorems \ref{main_thm2} (in
fact, a more general version) and \ref{main_thm3}. In Section
\ref{sec6}, we show that removing the log-factor from Theorem
\ref{main_thm3} is equivalent to the slicing problem.

\medskip
\noindent \textbf{Acknowledgements}. We thank Grigoris Paouris for interesting discussions.

%%%%%%%%%%%%%%%%%%%%%%%%%%%%%%%%%%%%%%%%%%%%%%%%%%%%%%%%%%%%%%%%%%%%%%%%%%%%%%%%%%%%%%%%%%%%%%%%%%%%%%%%%%%%%%%%%%%%%%%%%%%%%%%%
%%%%%%%%%%%%%%%%%%%%%%%%%%%%%%%%%%%%%%%%%%%%%%%%%%%%%%%%%%%%%%%%%%%%%%%%%%%%%%%%%%%%%%%%%%%%%%%%%%%%%%%%%%%%%%%%%%%%%%%%%%%%%%%%
%%%%%%%%%%%%%%%%%%%%%%%%%%%%%%%%%%%%%%%%%%%%%%%%%%%%%%%%%%%%%%%%%%%%%%%%%%%%%%%%%%%%%%%%%%%%%%%%%%%%%%%%%%%%%%%%%%%%%%%%%%%%%%%%
%%%%%%%%%%%%%%%%%%%%%%%%%%%%%%%%%%%%%%%%%%%%%%%%%%%%%%%%%%%%%%%%%%%%%%%%%%%%%%%%%%%%%%%%%%%%%%%%%%%%%%%%%%%%%%%%%%%%%%%%%%%%%%%%
%%%%%%%%%%%%%%%%%%%%%%%%%%%%%%%%%%%%%%%%%%%%%%%%%%%%%%%%%%%%%%%%%%%%%%%%%%%%%%%%%%%%%%%%%%%%%%%%%%%%%%%%%%%%%%%%%%%%%%%%%%%%%%%%
%%%%%%%%%%%%%%%%%%%%%%%%%%%%%%%%%%%%%%%%%%%%%%%%%%%%%%%%%%%%%%%%%%%%%%%%%%%%%%%%%%%%%%%%%%%%%%%%%%%%%%%%%%%%%%%%%%%%%%%%%%%%%%%%
%%%%%%%%%%%%%%%%%%%%%%%%%%%%%%%%%%%%%%%%%%%%%%%%%%%%%%%%%%%%%%%%%%%%%%%%%%%%%%%%%%%%%%%%%%%%%%%%%%%%%%%%%%%%%%%%%%%%%%%%%%%%%%%%
%%%%%%%%%%%%%%%%%%%%%%%%%%%%%%%%%%%%%%%%%%%%%%%%%%%%%%%%%%%%%%%%%%%%%%%%%%%%%%%%%%%%%%%%%%%%%%%%%%%%%%%%%%%%%%%%%%%%%%%%%%%%%%%%

\section{Preliminaries} \label{sec2}

Given $1 \leq k \leq n$, the Grassmann manifold of all
$k$-dimensional linear subspaces of $\Real^n$ is denoted by
$G_{n,k}$. Given $E \in G_{n,k}$, the orthogonal projection onto $E$
is denoted by $Proj_E$, and given a Borel probability measure $\mu$
on $\Real^n$, we denote by $\pi_E \mu := (Proj_E)_*(\mu)$ the
push-forward of $\mu$ via $Proj_E$. For a convex body $K \subset
\RR^n$ containing the origin in its interior, its polar body is
denoted by:
$$
K^{\circ} = \set{ x \in \RR^n \; ; \; \scalar{x,y} \leq
1  \;\;\; \forall y \in K } ~.
$$
Finally, we denote by $\nabla$ and $\Hess$ the gradient and Hessian
of a sufficiently differentiable function, respectively.

Throughout this text, $x \simeq y$ is an abbreviation for $c x \leq
y \leq C x$ for universal constants $c, C > 0$. Similarly, we write
$x \lesssim y$ ($x \gtrsim y$) when $x \leq C y$ ($x \geq c y$). Additionally, for two convex sets
$K, T \subset \RR^n$ we write $K \simeq T$ when:
$$ c K \subseteq T \subseteq C K $$
for universal constants $c, C > 0$.

%%%%%%%%%%%%%%%%%%%%%%%%%%%%%%%%%%%%%%%%%%%%%%%%%%%%%%%%%%%%%%%%%%%%%%%%%%%%%%%%%%%%%%%%%%%%%%%%%%%%%%%%%%%%%%%%%%%%%%%%%%%%%%%%%%%%%%%%%%%%%%%%%%%%

\subsection{Extension of the Slicing Problem to log-concave measures} \label{subsec:slicing}

We first recall the well-known extension of the slicing problem from the
class of convex bodies to the class of all log-concave measures, due
to Ball \cite{BallPhD,Ball-kdim-sections}. Given a log-concave
probability measure $\mu$ on $\Real^n$, define its isotropic
constant $L_\mu$ by:
\begin{equation} \label{eq:Lmu}
L_\mu := \norm{\mu}_{L_\infty}^\frac{1}{n} \detcov(\mu)^{\frac{1}{2n}} ~,
\end{equation}
where $\norm{\mu}_{L_\infty} := \sup_{x \in \Real^n} \rho(x)$ and
$\rho$ is the log-concave density of $\mu$. It was shown by Ball
\cite{BallPhD,Ball-kdim-sections} that given $n \geq 1$:
\[
\sup_\mu {L_\mu} \leq C \sup_{K} L_K ~,
\]
where the suprema are taken over all log-concave probability
measures $\mu$ and convex bodies $K$ in $\Real^n$, respectively (see
e.g. \cite{quarter} for the non-even case). Similarly, the following theorem slightly
generalizes Theorem \ref{main_thm}:
%Theorem \ref{main_thm1} naturally extends to the following more general:
\begin{thm} \label{main_thm1+}
Let $\mu$ denote a log-concave probability measure on $\Real^n$ with
barycenter at the origin. Suppose that $\mu$ is in addition a
$\psi_{\alpha}$-measure ($1 \leq \alpha \leq 2$) with constant
$b_{\alpha}$. Then:
\[
L_\mu \leq C \sqrt{b_\alpha^\alpha n^{1-\alpha/2}} ~.
\]
\end{thm}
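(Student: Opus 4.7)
The plan is to mirror, in the log-concave category, the one-paragraph deduction of Theorem \ref{main_thm} from Theorem \ref{main_thm2} given in the introduction. Since Theorem \ref{main_thm2} is already stated for any log-concave probability measure with barycenter at the origin, it is directly applicable to $\mu$; what changes in the generalization is only the substitute used at the $L_\infty$/Rogers-Shephard step.

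First, I would note that every log-concave probability measure is $\psi_1$ with a universal constant (Berwald's inequality), so we may assume $b_\alpha \gtrsim 1$. Consequently $p^\star := c\, n^{\alpha/2}/b_\alpha^\alpha \leq n$ for the universal constant $c$ from Theorem \ref{main_thm2}. Applying that theorem at $p = p^\star$ and using the obvious inclusion $Z_{p^\star}(\mu) \subseteq Z_n(\mu)$ gives
\[
c\sqrt{p^\star}\,\detcov(\mu)^{\frac{1}{2n}} \;=\; c\sqrt{p^\star}\,\vr(Z_2(\mu)) \;\leq\; \vr(Z_{p^\star}(\mu)) \;\leq\; \vr(Z_n(\mu)).
\]

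Second, I would invoke the log-concave analogue of the Rogers-Shephard inequality,
\[
\vol(Z_n(\mu)) \;\leq\; C^n/\norm{\mu}_{L_\infty},
\]
or equivalently $\vr(Z_n(\mu)) \leq C\sqrt{n}/\norm{\mu}_{L_\infty}^{1/n}$ after using $\vol(B_n)^{1/n} \simeq n^{-1/2}$. For $\mu = \mu_K$ this reduces, via $Z_n(\mu_K) \simeq conv(K,-K)$ and $\norm{\mu_K}_{L_\infty} = 1/\vol(K)$, to the classical Rogers-Shephard bound $\vol(conv(K,-K)) \leq 4^n \vol(K)$. For general log-concave $\mu$ it is standard and can be obtained by passing to a Ball body $K_\mu$ with $\vol(K_\mu) \simeq 1/\norm{\mu}_{L_\infty}$ and $Z_n(\mu) \simeq Z_n(\mu_{K_\mu})$, reducing the bound to the convex case.

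Chaining the two displays and recalling the definition \eqref{eq:Lmu},
\[
L_\mu \;=\; \norm{\mu}_{L_\infty}^{\frac{1}{n}}\,\detcov(\mu)^{\frac{1}{2n}} \;\lesssim\; \sqrt{n/p^\star} \;\simeq\; \sqrt{b_\alpha^\alpha\, n^{1-\alpha/2}},
\]
which is the desired bound.

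The only nontrivial input beyond Theorem \ref{main_thm2} is the log-concave Rogers-Shephard estimate of the second step, so this is where I would expect the subtlety to lie. A seemingly cleaner alternative would be to quote Ball's comparison $L_\mu \lesssim L_{K_\mu}$ and then invoke Theorem \ref{main_thm} itself; however, doing so requires showing that the $\psi_\alpha$-constant transfers from $\mu$ to the uniform measure on $K_\mu$ up to universal factors, which is not automatic and is arguably more delicate than supplying the $L_\infty$ bound on $\vol(Z_n(\mu))$ directly.
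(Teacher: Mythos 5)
Your argument is correct and follows essentially the same route as the paper: both reduce Theorem~\ref{main_thm1+} to Theorem~\ref{main_thm2} via the monotonicity $Z_{p}(\mu) \subseteq Z_n(\mu)$ together with the estimate $\vr(Z_n(\mu)) \lesssim \sqrt{n}/\norm{\mu}_{L_\infty}^{1/n}$, which is precisely (one half of) the paper's Lemma~\ref{lem:Zn}. The only difference is that you sketch this last estimate via Ball bodies and the classical Rogers--Shephard inequality, while the paper obtains it by combining a cited result of Paouris ($\vr(Z_n(\mu)) \simeq \sqrt{n}/\rho(0)^{1/n}$) with Fradelizi's bound $e^{-n}\norm{\mu}_{L_\infty} \leq \rho(0) \leq \norm{\mu}_{L_\infty}$; your proposed reduction to the convex case is also viable, though one should be a bit careful that the Ball body controlling $Z_n$ (roughly $K_{2n}(\rho)$) and the one with volume $\simeq 1/\rho(0)$ (namely $K_n(\rho)$) are comparable, which is standard but not entirely free.
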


As was the case with Theorem \ref{main_thm}, deducing Theorem
\ref{main_thm1+} from Theorem \ref{main_thm2} is equally elementary.
We only require the following additional well-known lemma, which
will come in handy in other instances in this work as well. This
lemma serves as an extension of (\ref{eq:ZnK}) to the class of
log-concave measures.

%Some additional useful properties of the body $Z_n(\mu)$, analogous to (\ref{eq:XXXX}) in the case that $\mu$ is uniform on a convex body, are summarized in the following:
\begin{lem} \label{lem:Zn}
Let $\mu$ denote a log-concave probability measure on $\Real^n$ with barycenter at the origin. Then:
\[
\vr(Z_n(\mu)) \simeq %\frac{n}{\vr(\Lambda_n(\mu))} \simeq
\frac{\sqrt{n}}{\norm{\mu}_{L_\infty}^\frac{1}{n}} ~.
\]
%\item[(b)]
%\[
%\sqrt{ \int_{\Real^n} |x|^2 d\mu(x) } \leq \left( \int_{\Real^n}
%|x|^n d\mu(x) \right)^{1/n} \leq C \diam(Z_n(\mu)) ~.
%\]
\end{lem}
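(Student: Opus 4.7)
The plan is to reduce this statement about an arbitrary log-concave measure to the convex-body inclusion \eqref{eq:ZnK}, by passing through a convex body built from the density $\rho$ of $\mu$ via K. Ball's construction. Since $\vol(B_n)^{1/n} \simeq n^{-1/2}$, the claim is equivalent to $\vol(Z_n(\mu))^{1/n} \simeq \norm{\mu}_{L_\infty}^{-1/n}$. Because $\mu$ has barycenter at the origin, Fradelizi's inequality gives $\norm{\rho}_{L_\infty} \leq e^n \rho(0)$, so $\norm{\mu}_{L_\infty}^{1/n} \simeq \rho(0)^{1/n}$; it therefore suffices to prove $\vol(Z_n(\mu))^{1/n} \simeq \rho(0)^{-1/n}$.

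Introduce the convex body
\[ K := K_{2n}(\rho) = \set{x \in \RR^n : \tfrac{1}{\rho(0)} \int_0^\infty 2n\, s^{2n-1} \rho(sx)\, ds \geq 1}, \]
whose radial function in direction $\theta \in S^{n-1}$ is $r_K(\theta) = (\rho(0)^{-1} \int_0^\infty 2n s^{2n-1}\rho(s\theta)\, ds)^{1/(2n)}$. A direct polar-coordinates calculation yields Ball's identity
\[ \int_K |\scalar{x,\theta}|^n\, dx \;=\; \rho(0)^{-1} \int_{\RR^n} |\scalar{x,\theta}|^n \rho(x)\, dx \;=\; \rho(0)^{-1}\, h_{Z_n(\mu)}(\theta)^n \]
for every $\theta \in \RR^n$, or equivalently $Z_n(\mu) = (\rho(0)\vol(K))^{1/n} Z_n(\mu_K)$, where $\mu_K$ denotes the uniform probability measure on $K$. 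Since $K$ is an honest convex body, \eqref{eq:ZnK} combined with the Rogers-Shephard inequality \cite{RS} gives $\vol(Z_n(\mu_K))^{1/n} \simeq \vol(conv(K,-K))^{1/n} \simeq \vol(K)^{1/n}$, so
\[ \vol(Z_n(\mu))^{1/n} \simeq \rho(0)^{1/n} \vol(K)^{2/n}. \]

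It remains to establish the volume comparison $\vol(K)^{1/n} \simeq \rho(0)^{-1/n}$. The companion body $K_n(\rho)$, defined analogously with the exponent $2n$ replaced by $n$, satisfies $\vol(K_n(\rho)) = 1/\rho(0)$ exactly (the same polar integration, together with $\int \rho = 1$). The uniform pointwise comparison $r_{K_{2n}}(\theta) \simeq r_{K_n}(\theta)$ for $\theta \in S^{n-1}$ then reduces to the one-dimensional fact that, for any log-concave $f \colon [0,\infty) \to [0,\infty)$, the quantity $\bigl(p \int_0^\infty s^{p-1} f(s)\, ds\bigr)^{1/p}$ changes only by a universal factor upon replacing $p$ by $2p$; this is a standard Borell-type consequence of log-concavity applied to $f(s) := \rho(s\theta)$ for each $\theta$, and is the main analytic point of the argument (all other steps being routine polar calculations or direct appeals to \eqref{eq:ZnK} and Rogers-Shephard). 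Granting it, we conclude $\vol(Z_n(\mu))^{1/n} \simeq \rho(0)^{1/n}\cdot \rho(0)^{-2/n} = \rho(0)^{-1/n} \simeq \norm{\mu}_{L_\infty}^{-1/n}$, and multiplying by $\sqrt{n} \simeq \vol(B_n)^{-1/n}$ yields the asserted equivalence.
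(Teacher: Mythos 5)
Your proof takes a genuinely different route from the paper's. The paper simply cites \cite[Proposition 3.7]{PaourisSmallBall} for the identity $\vr(Z_n(\mu)) \simeq \sqrt{n}/\rho(0)^{1/n}$ and then applies Fradelizi's inequality, whereas you re-derive that cited proposition from scratch via K. Ball's body $K = K_{2n}(\rho)$. The reduction, the polar-coordinates identity linking $Z_n(\mu)$ to $Z_n(\mu_K)$, and the comparison of $K_{2n}(\rho)$ with $K_n(\rho)$ (whose volume is exactly $1/\rho(0)$) are all correct, and the final bookkeeping $\vol(Z_n(\mu))^{1/n} \simeq \rho(0)^{1/n}\vol(K)^{2/n} \simeq \rho(0)^{-1/n}$ closes the loop. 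This is essentially a proof of the black-boxed Paouris result, so it is more self-contained than the paper's one-line citation, at the cost of invoking Ball's convexity theorem for $K_p(\rho)$ and a Berwald/Borell-type moment comparison.

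Two points deserve more care. First, you invoke (\ref{eq:ZnK}) for $K = K_{2n}(\rho)$, but the paper states (\ref{eq:ZnK}) under the hypothesis that the convex body has barycenter at the origin, and it is not clear that $K_{2n}(\rho)$ inherits this from $\mu$ (the barycenter of $K_{n+1}(\rho)$ is proportional to that of $\rho$, but for $K_{2n}(\rho)$ the barycenter is a nonlinear functional of $\rho$ and need not vanish). This is easily repaired: the upper bound $Z_n(\mu_K) \subseteq \mathrm{conv}(K,-K)$ is unconditional, and for the lower bound one can instead appeal to the Lutwak--Yang--Zhang inequality (\ref{eq:LYZ-bound}) with $p=n$, which gives $\vr(Z_n(\mu_K)) \geq c\,\vr(K)$ for any convex body with $0$ in its interior (and $0 \in \mathrm{int}(K_{2n}(\rho))$ since $\rho(0) > 0$ by Fradelizi). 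Together with Rogers--Shephard this yields $\vol(Z_n(\mu_K))^{1/n} \simeq \vol(K)^{1/n}$ as you need, with no centering hypothesis on $K$. Second, the one-dimensional comparison you state, that $\bigl(p\int_0^\infty s^{p-1}f(s)\,ds\bigr)^{1/p}$ changes only by a bounded factor when $p$ is replaced by $2p$, is \emph{false} as written (scale $f$ by a constant $\varepsilon$ and the ratio blows up as $\varepsilon^{-1/(2p)}$); the correct and standard statement compares $\bigl(\tfrac{p}{f(0)}\int_0^\infty s^{p-1}f(s)\,ds\bigr)^{1/p}$ across $p$, and this is indeed what the radial functions $r_{K_p(\rho)}$ involve, so your application is correct even though the intermediate claim is misstated.
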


Given Lemma \ref{lem:Zn}, the reduction of Theorem \ref{main_thm1+}
to Theorem \ref{main_thm2} is indeed immediate, since for $p \leq n$
in the range specified in the latter:
\[
c \sqrt{p} \leq \frac{\vr(Z_p(\mu))}{\vr(Z_2(\mu))} \leq \frac{\vr(Z_n(\mu))}{\detcov(\mu)^\frac{1}{2n}} \simeq
\frac{\sqrt{n}}{\norm{\mu}_{L_\infty}^\frac{1}{n} \detcov(\mu)^\frac{1}{2n}} = \frac{\sqrt{n}}{L_\mu} ~.
\]

\begin{proof}[Proof of Lemma \ref{lem:Zn}]
%We begin with the proof of (a). A convex body $K \subset \RR^n$ is
%centrally-symmetric when $K= -K$. The Blaschke--Santal\'o and the
%Bourgain-Milman inequalities (see, e.g., \cite{Pis}) state that for
%a centrally-symmetric convex body $K \subset \RR^n$:
%\begin{equation}
%\label{eq_2117} \vr(K^\circ) \vr(K) \simeq 1 ~.
%\end{equation}
%The left-hand side part of (a) thus follows immediately from Lemma \ref{lem:Lambda-Zp}.
Denote by $\rho$ the log-concave density of $\mu$.
According to \cite[Proposition 3.7]{PaourisSmallBall} (compare with \cite[Lemma 2.8]{K_psi2} and Lemma \ref{lem:Lambda-Zp} below):
\[
 \vr(Z_n(\mu)) \simeq \frac{\sqrt{n}}{\rho(0)^\frac{1}{n}} ~.
\]
However, according to Fradelizi \cite{fradelizi}:
\[
e^{-n} M \leq \rho(0) \leq M \quad , \quad M := \norm{\mu}_{L_\infty} = \sup_{x \in \Real^n} \rho(x) ~,
\]
and so the assertion immediately follows.
%The left-hand side inequality in (b) is simply H\"older's
%inequality. In order to prove the right-hand side inequality in (b),
%use \cite[Lemma 3.2]{Paouris-IsotropicTail} to obtain
%$$  \int_{\RR^n} |x|^n d \mu(x) = \kappa_n \int_{S^{n-1}} \int_{\RR^n} |\langle x, \theta
%\rangle|^n d \mu(x) d \sigma(\theta) \leq \kappa_n \sup_{\theta \in
%S^{n-1}} \left( h_{Z_n(\mu)}(\theta) \right)^n $$ where $\kappa_n
%\leq C^n$ is a constant depending solely on the dimension $n$. Since
%$\diam(Z_n(\mu)) = 2 \sup_{\theta \in S^{n-1}}
%h_{Z_n(\mu)}(\theta)$, the lemma is proven.
\end{proof}

%%%%%%%%%%%%%%%%%%%%%%%%%%%%%%%%%%%%%%%%%%%%%%%%%%%%%%%%%%%%%%%%%%%%%%%%%%%%%%%%%%%%%%%%%%%%%%%%%%%%%%%%%%%%%%%%%%%%%%%%%%%%%%%%%%%%%%%%%%%%%%%%%%%%

\subsection{$\Lambda_p$-bodies}

Now suppose that $\mu$ is an arbitrary Borel probability measure on
$\RR^n$. Its logarithmic Laplace transform is defined as:
$$
\Lambda_{\mu}(\xi) := \log \int_{\RR^n} \exp(\scalar{\xi, x})
d\mu(x) \quad \quad , \quad \xi \in \RR^n ~.
$$
The function $\Lambda_{\mu}$ is always convex (e.g. by H\"{o}lder's
inequality), and clearly $\Lambda_{\mu}(0) = 0$. If in addition the
barycenter of $\mu$ lies at the origin, then $\Lambda_\mu$ is
non-negative (by Jensen's inequality). In this case, for any $t \geq
0$ and $\alpha \geq 1$:
\begin{equation} \label{eq:Lambda-inclusion}
\frac{1}{\alpha} \set{ \Lambda_\mu \leq \alpha t } \subseteq \set{
\Lambda_\mu \leq t } \subseteq \set{ \Lambda_\mu \leq \alpha t } ~,
\end{equation}
where we abbreviate $\set{\Lambda_{\mu} \leq t} = \{ \xi \in \RR^n ; \Lambda_{\mu}(\xi) \leq t \}$. % for $t \in \RR \cup \{ +\infty \}$.
When $\mu$ is log-concave, the convex function $\Lambda_{\mu}$
possesses several additional regularity properties. For instance
$\set{\Lambda_{\mu} < \infty}$ is an open set, and $\Lambda_{\mu}$
is $C^{\infty}$-smooth and strictly-convex in this open set (see,
e.g., \cite[Section 2]{K_psi2}).

The following lemma describes a certain equivalence, known to
specialists, between the $L_p$-centroid bodies and the level-sets of
the logarithmic Laplace Transform $\Lambda_\mu$. See Lata\l a and
Wojtaszczyk \cite[Section 3]{LW} for a proof of a dual version in
the symmetric case (i.e., when $\mu(A) = \mu(-A)$ for all Borel
subsets $A \subset \RR^n$).

\begin{dfn*}
The $\Lambda_p$-body associated to $\mu$, for $p \geq 0$, is defined
as:
$$
\Lambda_p(\mu) := \set{\Lambda_{\mu} \leq p} \cap -\set{
\Lambda_{\mu} \leq p } ~.
$$
\end{dfn*}

\begin{lem} \label{eq_0000} \label{lem:Lambda-Zp}
Suppose $\mu$ is a log-concave probability measure on $\RR^n$ whose
barycenter lies at the origin. Then for any $p \geq 1$:
\[
\Lambda_p(\mu) \simeq p Z_p(\mu)^{\circ} ~.
\]
\end{lem}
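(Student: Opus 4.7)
The plan is to reduce the claim to a one-dimensional comparison between polynomial and exponential moments of a log-concave random variable. Fix $\xi \in \Real^n$ and study the real-valued random variable $Y := \scalar{\xi, \cdot}$ under $\mu$; since $\mu$ is log-concave with barycenter at the origin, $Y$ is a log-concave random variable on $\Real$ with $\E Y = 0$. By definition, $\Lambda_\mu(\pm \xi) = \log \E e^{\pm Y}$, while $\xi \in p Z_p(\mu)^\circ$ precisely when $h_{Z_p(\mu)}(\xi) = \norm{Y}_p \leq p$. The lemma thus reduces to the one-dimensional statement that, for any log-concave mean-zero $Y$ on $\Real$, the conditions ``$\norm{Y}_p \leq p$'' and ``$\log \E e^{\pm Y} \leq p$'' are equivalent up to universal rescaling of $Y$. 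The main analytic input is a Berwald-type moment comparison for log-concave mean-zero $Y$ on $\Real$, $\norm{Y}_k \leq C'(k/p)\norm{Y}_p$ for $k \geq p \geq 1$ with universal $C'$, which is well-known in the $L_p$-centroid body literature.

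For the inclusion $p Z_p(\mu)^\circ \subseteq C \Lambda_p(\mu)$, suppose $\norm{Y}_p \leq p$. I would Taylor-expand $\E e^{cY}$ for a small universal $c > 0$: the constant term is $1$, the linear term vanishes since $\E Y = 0$, and each remaining summand is bounded by $c^k \norm{Y}_k^k / k!$. For $2 \leq k \leq p$, monotonicity of $L_k$-norms gives $\norm{Y}_k \leq p$, and the contribution is at most $\sum_{k \geq 2}(cp)^k/k! \leq e^{cp}$. For $k > p$, Berwald gives $\norm{Y}_k \leq C'k$, so using $k^k/k! \leq e^k$ the tail is bounded by $\sum_{k > p}(C'ce)^k$, which is a geometric series once we choose $c < 1/(2C'e)$. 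Altogether $\E e^{cY} \leq e^{cp} + O(1)$, so $\Lambda_\mu(c\xi) \leq cp + O(1) \leq \bar{C}p$ for all $p \geq 1$; the identical bound holds at $-\xi$. An application of (\ref{eq:Lambda-inclusion}) absorbs the factor $\bar{C}$ to give $c\xi \in \bar C \Lambda_p(\mu)$, hence $\xi \in (\bar C/c)\Lambda_p(\mu)$.

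For the reverse inclusion $\Lambda_p(\mu) \subseteq Cp Z_p(\mu)^\circ$, suppose $\Lambda_\mu(\pm\xi) \leq p$. Markov's inequality applied to $e^Y$ and $e^{-Y}$ yields the exponential tail bound $\PP(|Y| > t) \leq 2 e^{p - t}$ for all $t \geq 0$. Integrating through the layer-cake formula and splitting at $t = p$,
\[
\E |Y|^p = p \int_0^\infty t^{p-1}\PP(|Y| > t)\,dt \leq \int_0^p p t^{p-1}\,dt + 2 p e^p \int_p^\infty t^{p-1} e^{-t}\,dt \leq p^p + 2 p e^p \Gamma(p) \leq (1 + \bar C \sqrt{p})\,p^p,
\]
by Stirling. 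Since $(1 + \bar C \sqrt{p})^{1/p}$ is bounded uniformly for $p \geq 1$, we obtain $\norm{Y}_p \leq C'' p$, i.e.\ $\xi \in C'' p Z_p(\mu)^\circ$.

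The main obstacle I expect is locating and citing the correct form of the Berwald-type moment bound $\norm{Y}_k \leq C'(k/p)\norm{Y}_p$ for log-concave mean-zero $Y$; once granted, the rest of the argument is routine constant-chasing, cleanly handled by the elementary convexity-based inclusion (\ref{eq:Lambda-inclusion}) already recorded in the paper.
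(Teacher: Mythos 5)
Your proof is correct, and for the substantive direction it takes a genuinely different route than the paper's. For the ``easy'' inclusion $\Lambda_p(\mu) \subseteq C p Z_p(\mu)^\circ$, you and the paper both start from the exponential-moment bound $\E e^{|Y|} \leq 2e^p$; the paper then applies the elementary inequality $t^p/p! \leq e^t$ in one line, whereas you integrate via the layer-cake formula, split at $t=p$, and invoke Stirling. Both work; the paper's is shorter, yours is more computational but avoids needing $(p!)^{1/p} \leq p$.

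The real difference is in the inclusion $p Z_p(\mu)^\circ \subseteq C \Lambda_p(\mu)$. The paper works on the tail side: it uses the Pr\'ekopa--Leindler inequality to see that $t \mapsto \PP(Y \geq t)$ is log-concave, the Gr\"unbaum--Hammer inequality to pin down its value at $0$, and Markov's inequality at a single scale $t = 3ep$ to deduce an exponential tail; integrating this tail controls the Laplace transform. You instead work on the moment side: you Taylor-expand $\E e^{cY}$, control the moments $\|Y\|_k$ for $k \leq p$ by monotonicity and for $k > p$ by the Berwald-type reverse H\"older bound $\|Y\|_k \leq C'(k/p)\|Y\|_p$. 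The two approaches encode the same log-concavity input in dual ways (sub-exponential tail vs.\ linear-in-$k$ moment growth), and each makes one of the two implications in the equivalence immediate. One pragmatic remark: the Berwald-type bound you want is exactly the paper's inequality (\ref{eq:Zp-inclusion}), which the paper records as a known consequence of Berwald's inequality \emph{independently} of Lemma \ref{lem:Lambda-Zp} (it merely \emph{observes afterwards} that the lemma reinterprets it via (\ref{eq:Lambda-inclusion})); cite Berwald/Borell or \cite{MilmanPajor} directly to avoid any appearance of circularity. Also, as you note, the vanishing of the linear term $c\E Y$ is a pleasant simplification but not actually needed, since $\|Y\|_1 \leq p$ gives $cp$ anyway, which is already absorbed by $e^{cp}$.
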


These two equivalent points of view turn out to complement each
other well, and play a synergetic role in this work. Before
providing a proof, we illustrate this in the following naive
example. Given a log-concave probability measure $\mu$, a well known
consequence of Berwald's inequality (see e.g. \cite{MilmanPajor}) is
that:
\begin{equation} \label{eq:Zp-inclusion}
q \geq p \geq 1 \;\;\; \Rightarrow \;\;\; Z_p(\mu) \subset Z_q(\mu)
\subset C \frac{q}{p} Z_p(\mu) ~.
\end{equation}
In view of Lemma \ref{lem:Lambda-Zp}, note that this is nothing else
but a reformulation (up to constants) of the trivial set of
inclusions in (\ref{eq:Lambda-inclusion}).

\begin{proof}[Proof of Lemma \ref{lem:Lambda-Zp}]
First, suppose that $\xi \in \Lambda_p(\mu)$. Then:
$$
\int_{\RR^n} \exp(\abs{\scalar{\xi,x}}) d \mu(x) \leq \int_{\RR^n}
\exp(\scalar{\xi,x}) d \mu(x) + \int_{\RR^n} \exp(-\scalar{\xi,x}) d
\mu(x) \leq 2 e^p ~.
$$
Using the inequality $t^p / p! \leq e^t$, valid for any $t \geq 0$,
we see that:
$$
h_{Z_p(\mu)}(\xi) = \brac{\int_{\RR^n} \abs{\scalar{\xi,x}}^p d
\mu(x)}^{\frac{1}{p}} \leq \brac{2 e^p p!}^\frac{1}{p} \leq C p ~.
$$
Since $\xi \in \Lambda_p(\mu)$ was arbitrary, this amounts to
$\Lambda_p(\mu) \subseteq C p Z_p(\mu)^\circ$, the first desired
inclusion.

For the other inclusion, suppose $\xi \in \RR^n$ is such that
$h_{Z_p(\mu)}(\xi) \leq p$, that is:
\begin{equation}
\left(  \int_{\RR^n} \abs{\scalar{\xi,x}}^p d \mu(x) \right)^{1/p}
\leq p ~. \label{eq_540}
\end{equation}
Write $X$ for the random vector in $\RR^n$ that is distributed
according to $\mu$. Then the function:
$$ \varphi(t) = \PP(\scalar{X,\xi} \geq t) \quad \quad , \quad\quad t \in \RR ~, $$
is log-concave, according to the Pr\'ekopa-Leindler inequality (see,
e.g., the first pages of  \cite{Pis}). Furthermore, since the barycenter of $\mu$ lies at the origin, we have $1/e \leq
\varphi(0) \leq 1 - 1/e$ by the Gr\"unbaum--Hammer inequality (see e.g.
\cite[Lemma 3.3]{bobkov}). Using Markov's inequality, (\ref{eq_540})
entails that:
$$ \varphi(3 e p) \leq ( 3 e)^{-p} ~. $$
Since $\varphi$ is log-concave, then:
$$
\PP(\scalar{X,\xi} \geq t) = \varphi(t) \leq \varphi(0) \left( \frac{\varphi(3 ep)}{\varphi(0)}
\right)^{\frac{t}{3 ep}} \leq C \exp(-t / (3e)) \quad ,  \quad \forall t \geq 3 ep ~.
$$
An identical bound holds for $\PP(\scalar{X,\xi} \leq -t)$, and
combining the two, we obtain:
$$
\PP(|\scalar{X,\xi}| \geq t)  \leq C \exp(-t / (3e)) \quad ,  \quad \forall t \geq 3 ep ~.
$$
Therefore:
\begin{align*}
\EE \exp \left( \frac{|\scalar{\xi,X}|}{6e} \right) & = \frac{1}{6e}
\int_0^{\infty}
\exp \left( \frac{t}{6e}  \right) \PP(|\scalar{X,\xi}| \geq t)dt  \\
& \leq   \frac{1}{6e} \int_0^{3ep} \exp \left( \frac{t}{6e} \right) dt + C \int_{3
ep}^{\infty} \exp(-t / (6 e)) dt \leq \exp\left( \tilde{C} p
\right) ~.
\end{align*}
Consequently:
$$ \max \left \{ \Lambda_{\mu} \left(\frac{1}{6e} \xi \right),
\Lambda_{\mu} \left( -\frac{1}{6e} \xi \right) \right \} \leq \log
\EE \exp \left( \frac{|\scalar{\xi,X}|}{6e} \right) \leq C p ~,
$$
for some $C \geq 1$, and using (\ref{eq:Lambda-inclusion}), this
implies:
\[
\max \left \{ \Lambda_{\mu} \left(\frac{1}{6eC} \xi \right),
\Lambda_{\mu} \left( -\frac{1}{6eC} \xi \right) \right \} \leq p ~,
\]
for any $\xi \in \RR^n$ with $h_{Z_p(\mu)}(\xi) \leq p$. This is
precisely the second desired inclusion $p Z_p(\mu)^\circ \subseteq
C' \Lambda_p(\mu)$, and the assertion follows.
%\begin{equation}
%C \Lambda_p(\mu) \supseteq p Z_p(\mu)^{\circ}. \label{eq_610}
%\end{equation}
%The lemma follows from (\ref{eq_538}) and (\ref{eq_610}).
\end{proof}

%%%%%%%%%%%%%%%%%%%%%%%%%%%%%%%%%%%%%%%%%%%%%%%%%%%%%%%%%%%%%%%%%%%%%%%%%%%%%%%%%%%%%%%%%%%%%%%%%%%%%%%%%%%%%%%%%%%%%%%%%%%%%%%%%%%%%%%%%%%%%%%%%%%%

\subsection{Level Sets of Convex Functions Under Gradient Maps}
%\medskip
The last topic we would like to review pertains to some properties
of level sets of convex functions and their gradient images. The
possibility to use the gradient image of $\Lambda_{\mu}$ as in
\cite{quarter} is one of the main reasons for additionally employing the logarithmic
Laplace transform, rather than working exclusively with the
$L_p$-centroid bodies.

\begin{lem}\label{lem:prod} 
Let $F: \RR^n \rightarrow \RR \cup
\{ \infty \}$ be a non-negative convex function, which is $C^1$-smooth in $\set{ F < \infty }$.
%, $C^2$-smooth and strictly-convex in $\set{ F < \infty }$, with $F(0) = 0$. 
Let $q,r \geq 0$. Then:
\[
\scalar{z,\nabla F(x)} \leq q+r \quad \quad \quad \text{for any} \ z
\in \set{F \leq r}, \ x \in \frac{1}{2} \{ F \leq q \}.
\]
In other words:
\[
\nabla F \brac{\frac{1}{2} \set{F \leq q}} \subset (q+r) \set{F \leq
r}^{\circ} ~.
\]
\end{lem}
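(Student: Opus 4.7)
The plan is to bound $\scalar{z,\nabla F(x)}$ by splitting it as $\scalar{z-x,\nabla F(x)}+\scalar{x,\nabla F(x)}$, and then controlling each piece by the defining first-order inequality for convex $C^1$ functions. Recall that if $F$ is convex and $C^1$-smooth at $x\in\{F<\infty\}$, then for every $y\in\RR^n$
\[
F(y)\geq F(x)+\scalar{\nabla F(x),y-x}~.
\]
This supporting-hyperplane inequality is all I will need.

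First I would apply it with $y=z\in\{F\leq r\}$. Rearranging gives
\[
\scalar{\nabla F(x),z-x}\leq F(z)-F(x)\leq r~,
\]
where in the last step I used $F(z)\leq r$ together with $F(x)\geq 0$. Next I would apply the same inequality with $y=2x$. Since $x\in\tfrac{1}{2}\{F\leq q\}$ means exactly $F(2x)\leq q$, this gives
\[
\scalar{\nabla F(x),x}=\scalar{\nabla F(x),2x-x}\leq F(2x)-F(x)\leq q~,
\]
again invoking $F\geq 0$ to drop the $-F(x)$ term. Adding the two estimates yields $\scalar{\nabla F(x),z}\leq q+r$, which is the first form of the conclusion.

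The second formulation is just a restatement: writing $w=\nabla F(x)$, the bound $\scalar{z,w}\leq q+r$ for every $z\in\{F\leq r\}$ says precisely that $w/(q+r)\in\{F\leq r\}^{\circ}$, so $\nabla F\bigl(\tfrac{1}{2}\{F\leq q\}\bigr)\subseteq(q+r)\{F\leq r\}^{\circ}$. There is no real obstacle here; the only mildly delicate point is the use of $F\geq 0$ to discard the $-F(x)$ contributions, which is why non-negativity of $F$ appears in the hypotheses. (The degenerate cases $q=0$ or $r=0$ cause no issue, as $\{F\leq 0\}$ is just the minimum set of $F$ and both inequalities above remain valid.)
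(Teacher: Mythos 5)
Your proof is correct, and the underlying ideas are essentially the same as in the paper: both exploit the supporting-hyperplane inequality for the convex $C^1$ function $F$, the non-negativity of $F$, and the fact that $x\in\tfrac{1}{2}\{F\leq q\}$ is equivalent to $F(2x)\leq q$. The paper arranges these slightly differently, applying the tangent inequality once at $x+z/2$ and then midpoint convexity $F\bigl((2x+z)/2\bigr)\leq\tfrac12\bigl(F(2x)+F(z)\bigr)$; you apply the tangent inequality twice (at $z$ and at $2x$) and add, discarding $-F(x)$ each time. Both routes land on the same bound $\scalar{z,\nabla F(x)}\leq F(2x)+F(z)\leq q+r$, so this is a cosmetic rather than a substantive difference, and your restatement in terms of the polar body is exactly right.
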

\begin{proof}
Since $F$ is non-negative and its graph lies above any tangent
hyperplane, then:
\[
\scalar{\nabla F(x),\frac{z}{2}} \leq F(x) + \scalar{\nabla F(x)
,\frac{z}{2}} \leq F(x+z/2) \leq \frac{F(2x) + F(z)}{2} \leq
\frac{q+r}{2} ~.
\]
\end{proof}

The following lemma was proved in \cite[Lemma 2.3]{K_psi2} for an
even function $F$.

\begin{lem} \label{lem:F-vr}
Let $F: \RR^n \rightarrow \RR \cup \{ \infty \}$ be a non-negative convex function,
$C^2$-smooth and strictly-convex in $\set{ F < \infty }$, with $F(0) = 0$. 
Let $p > 0$, and set:
$$
F_p := \set{F \leq p} \cap -\set{F \leq p} ~.
$$
Assume that:
\[
\Psi_p := \brac{\frac{1}{\vol(\frac{1}{2} F_p)} \int_{\frac{1}{2} F_p}
\det \Hess F(x) dx}^\frac{1}{n} > 0  ~.
\]
Then:
\[
\vr(F_p) \leq 2 \frac{\sqrt{p}}{\sqrt{\Psi_p}} ~.
\]
\end{lem}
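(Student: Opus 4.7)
My plan is to combine the gradient-inclusion from Lemma~\ref{lem:prod} with a Jacobian change of variables and Blaschke--Santaló. The key observation is that even though $F$ need not be even, the body $F_p$ is centrally symmetric by construction, so Blaschke--Santaló is available.

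\textbf{Step 1: Gradient inclusion.} Apply Lemma~\ref{lem:prod} with $q = r = p$ to the function $F$ (which is non-negative, convex, and $C^1$-smooth on $\set{F < \infty}$ by hypothesis). This yields
\[
\nabla F\brac{\tfrac{1}{2}\set{F \leq p}} \subset 2p \set{F \leq p}^\circ.
\]
Since $\tfrac{1}{2} F_p \subset \tfrac{1}{2}\set{F \leq p}$ and $\set{F \leq p}^\circ \subset F_p^\circ$ (because $F_p \subset \set{F \leq p}$), restricting gives
\[
\nabla F\brac{\tfrac{1}{2} F_p} \subset 2p F_p^\circ.
\]

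\textbf{Step 2: Jacobian change of variables.} Because $F$ is $C^2$-smooth and strictly convex on $\set{F < \infty}$, $\Hess F$ is positive definite there and $\nabla F$ is injective. The area formula therefore gives
\[
\vol\brac{\nabla F(\tfrac{1}{2} F_p)} = \int_{\frac{1}{2}F_p} \det \Hess F(x)\,dx = \Psi_p^n \cdot \vol\brac{\tfrac{1}{2} F_p}.
\]
Combining with Step~1 and using $\vol(\tfrac{1}{2} F_p) = 2^{-n} \vol(F_p)$:
\[
\vol(F_p)\,\Psi_p^n \leq (4p)^n \vol(F_p^\circ).
\]

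\textbf{Step 3: Blaschke--Santaló and conclusion.} Since $F_p$ is centrally symmetric by construction, Blaschke--Santaló yields $\vol(F_p) \vol(F_p^\circ) \leq \vol(B_n)^2$. Substituting into the inequality from Step~2,
\[
\vol(F_p)^2\,\Psi_p^n \leq (4p)^n \vol(B_n)^2,
\]
and taking $n$-th roots after dividing by $\vol(B_n)^n$ yields $\vr(F_p) \leq 2\sqrt{p/\Psi_p}$, as desired.

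The proof is essentially a routine adaptation of the even case from \cite{K_psi2}, and I do not anticipate a significant obstacle: the only conceptual point requiring care is that while $F$ is not assumed even, the symmetrized level set $F_p$ is, which is exactly what is needed to apply Blaschke--Santaló. One should also verify the injectivity of $\nabla F$ on $\set{F < \infty}$ so that the change of variables is lossless; this follows directly from the strict convexity and $C^2$ assumptions.
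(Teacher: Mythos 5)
Your proof is correct and follows essentially the same route as the paper: apply Lemma~\ref{lem:prod} with $q=r=p$, pass from $\set{F\leq p}$ to the symmetrized set $F_p$ via the inclusion $\set{F\leq p}^\circ\subset F_p^\circ$, change variables by the gradient map, and conclude with Blaschke--Santal\'o using the central symmetry of $F_p$. One small inaccuracy: strict convexity does \emph{not} imply $\Hess F$ is positive definite (consider $x\mapsto x^4$ near the origin), only positive semi-definite; fortunately this is immaterial, since the area formula requires only injectivity of $\nabla F$ (which does follow from strict convexity) and is valid even where the Jacobian degenerates.
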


\begin{proof}
Applying Lemma \ref{lem:prod} with $q=r=p$, and using the change of
variables $x = \nabla F(y)$, we obtain:
\[
\vol(2p (F_p)^\circ) \geq \vol \left(\nabla F \left(\frac{1}{2} F_p
\right) \right) = \int_{\frac{1}{2} F_p} \det \Hess F(y) dy = \vol
\left(\frac{1}{2} F_p \right) \Psi_p^n ~.
\]
Equivalently, we obtain:
\[
\vol((F_p)^\circ) \geq \brac{\frac{\Psi_p}{4p}}^{n} \vol(F_p) ~.
\]
Note that $F_p$ is a centrally-symmetric convex body, i.e., $F_p =
-F_p$. The Blaschke--Santal\'o inequality (see, e.g.,
\cite{Schneider-Book}) for a centrally-symmetric convex body $K$ asserts
that:
\[ %\begin{equation} \label{eq:BS}
\vr(K^\circ) \vr(K) \leq 1~.
\] %\end{equation}
Combining the last two estimates with $K = F_p$, the result
immediately follows.
%Combining the last estimate with the Santal\'o inequality (\ref{eq_2117}), the result immediately follows.
\end{proof}

%%%%%%%%%%%%%%%%%%%%%%%%%%%%%%%%%%%%%%%%%%%%%%%%%%%%%%%%%%%%%%%%%%%%%%%%%%%%%%%%%%%%%%%%%%%%%%%%%%%%%%%%%%%%%%%%%%%%%%%%%%%%%%%%%
%%%%%%%%%%%%%%%%%%%%%%%%%%%%%%%%%%%%%%%%%%%%%%%%%%%%%%%%%%%%%%%%%%%%%%%%%%%%%%%%%%%%%%%%%%%%%%%%%%%%%%%%%%%%%%%%%%%%%%%%%%%%%%%%%
%%%%%%%%%%%%%%%%%%%%%%%%%%%%%%%%%%%%%%%%%%%%%%%%%%%%%%%%%%%%%%%%%%%%%%%%%%%%%%%%%%%%%%%%%%%%%%%%%%%%%%%%%%%%%%%%%%%%%%%%%%%%%%%%%
%%%%%%%%%%%%%%%%%%%%%%%%%%%%%%%%%%%%%%%%%%%%%%%%%%%%%%%%%%%%%%%%%%%%%%%%%%%%%%%%%%%%%%%%%%%%%%%%%%%%%%%%%%%%%%%%%%%%%%%%%%%%%%%%%
%%%%%%%%%%%%%%%%%%%%%%%%%%%%%%%%%%%%%%%%%%%%%%%%%%%%%%%%%%%%%%%%%%%%%%%%%%%%%%%%%%%%%%%%%%%%%%%%%%%%%%%%%%%%%%%%%%%%%%%%%%%%%%%%%
%%%%%%%%%%%%%%%%%%%%%%%%%%%%%%%%%%%%%%%%%%%%%%%%%%%%%%%%%%%%%%%%%%%%%%%%%%%%%%%%%%%%%%%%%%%%%%%%%%%%%%%%%%%%%%%%%%%%%%%%%%%%%%%%%
%%%%%%%%%%%%%%%%%%%%%%%%%%%%%%%%%%%%%%%%%%%%%%%%%%%%%%%%%%%%%%%%%%%%%%%%%%%%%%%%%%%%%%%%%%%%%%%%%%%%%%%%%%%%%%%%%%%%%%%%%%%%%%%%%

\section{A formula for $\vr(Z_p(\mu))$ involving tilted measures} \label{sec3}

Let $\mu$ denote a log-concave
probability measure on $\RR^n$ with density $\rho$, and let $\xi \in \set{\Lambda_{\mu} < \infty}$. We denote by $\mu_{\xi}$
the ``tilt'' of $\mu$ by $\xi$, defined via the following procedure. First, define the
probability density:
\[
\rho_\xi(x) := \frac{1}{Z_\xi} \rho(x) \exp(\scalar{\xi,x}) \quad
\quad \quad \text{for} \quad x \in \RR^n ~,
\]
where $Z_\xi > 0$ is a normalizing factor. Denoting by $b_\xi \in
\Real^n$ the barycenter of $\rho_\xi$, we set
$\mu_\xi$ to be the probability measure with density $\rho_\xi(\cdot -
b_\xi)$. Note that $\mu_\xi$ is a log-concave probability measure,
having the origin as its barycenter. Furthermore, as verified in
\cite[Section 2]{K_psi2}, we have:
\begin{equation}
 b_{\xi} = \nabla \Lambda_\mu(\xi) \quad  , \quad \cov(\mu_{\xi}) =
\Hess \Lambda_\mu(\xi) ~. \label{eq_2329}
\end{equation}
%where $\nabla$ and $\Hess$ denote the gradient and hessian, respectively.

The following proposition is one of the main results in this
section:

\begin{prop} \label{prop:vr-formula}
Let $\mu$ denote a log-concave probability measure on $\Real^n$
whose barycenter lies at the origin. Then, for all $1 \leq p \leq
n$:
\begin{equation} \label{eq:vr-formula}
\vr(Z_p(\mu)) \simeq \sqrt{p} \inf_{x \in \frac{1}{2}
\Lambda_p(\mu)} \detcov(\mu_x)^{\frac{1}{2n}} .
\end{equation}
\end{prop}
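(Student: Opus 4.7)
The plan is to transport the statement to the logarithmic Laplace side via Lemma \ref{lem:Lambda-Zp}. Since $Z_p(\mu)$ is centrally symmetric, the Blaschke--Santal\'o inequality combined with its reverse (Bourgain--Milman) yields $\vr(Z_p(\mu))\vr(Z_p(\mu)^\circ) \simeq 1$; together with $\Lambda_p(\mu) \simeq p\, Z_p(\mu)^\circ$ this gives $\vr(Z_p(\mu)) \simeq p/\vr(\Lambda_p(\mu))$. Using the identity $\det\Hess\Lambda_\mu(x) = \detcov(\mu_x)$ from (\ref{eq_2329}), the claim reduces to
\[
\vr(\Lambda_p(\mu)) \simeq \frac{\sqrt{p}}{\inf_{x \in \frac{1}{2}\Lambda_p(\mu)} \det\Hess\Lambda_\mu(x)^{1/(2n)}}.
\]

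For the upper bound on $\vr(\Lambda_p(\mu))$ (equivalently, the lower bound on $\vr(Z_p(\mu))$ in the proposition), I would apply Lemma \ref{lem:F-vr} to $F = \Lambda_\mu$, whose hypotheses hold since $\mu$ is log-concave with barycenter at the origin. The conclusion $\vr(\Lambda_p(\mu)) \leq 2\sqrt{p}/\sqrt{\Psi_p}$, together with the trivial estimate $\Psi_p \geq \inf_{x \in \frac{1}{2}\Lambda_p(\mu)} \det\Hess\Lambda_\mu(x)^{1/n}$, delivers one half of the $\simeq$.

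For the reverse direction, I would fix $x^* \in \frac{1}{2}\Lambda_p(\mu)$ approximately realising the infimum, and apply Paouris' upper bound (\ref{eq:vr-upper}) to the tilted log-concave probability measure $\mu_{x^*}$ (whose barycenter is at the origin by construction):
\[
\vr(Z_p(\mu_{x^*})) \leq C\sqrt{p}\,\detcov(\mu_{x^*})^{1/(2n)}.
\]
Re-applying Lemma \ref{lem:Lambda-Zp} to $\mu_{x^*}$ then gives $\vr(\Lambda_p(\mu_{x^*})) \gtrsim \sqrt{p}/\detcov(\mu_{x^*})^{1/(2n)}$. The remaining task is a volume comparison $\vol(\Lambda_p(\mu)) \gtrsim \vol(c\,\Lambda_p(\mu_{x^*}))$ for a universal $c > 0$. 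I would carry this out via the Bregman-type identity
\[
\Lambda_{\mu_{x^*}}(\xi) = \Lambda_\mu(x^* + \xi) - \Lambda_\mu(x^*) - \langle\nabla\Lambda_\mu(x^*),\xi\rangle,
\]
using the bound $\Lambda_\mu(\pm x^*) \leq p/2$ (which follows from $\pm 2 x^* \in \Lambda_p(\mu)$ and convexity of $\Lambda_\mu$) to absorb the constant term, and Lemma \ref{lem:prod} with $q = r = p$ to control the linear term via $\nabla\Lambda_\mu(x^*) \in 2p\,\Lambda_p(\mu)^\circ$.

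The main obstacle is this final volume comparison, where the asymmetry of $\mu$ makes matters delicate: $\Lambda_p(\mu)$ simultaneously involves the tilts at $x^*$ and at $-x^*$, whereas Paouris' bound was applied only to the tilt at $x^*$. The hypothesis $x^* \in \frac{1}{2}\Lambda_p(\mu)$ (rather than $x^* \in \Lambda_p(\mu)$) is essential here, as it yields the factor-of-two slack $\Lambda_\mu(\pm x^*) \leq p/2$ needed to absorb both the constant term $\Lambda_\mu(x^*)$ and the linear shift $\langle\nabla\Lambda_\mu(x^*),\xi\rangle$ (controlled by polarity through Lemma \ref{lem:prod}) while still landing in the level set $\{\Lambda_\mu \leq p\}$.
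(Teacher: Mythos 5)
Your proposal takes essentially the same route as the paper, which packages the key comparison step as Proposition \ref{prop:Lambda-iso}: for $x \in \frac{1}{2}\Lambda_p(\mu)$, $\Lambda_p(\mu_x) \simeq \Lambda_p(\mu)$ (equivalently, Proposition \ref{prop:Zp-iso}: $Z_p(\mu_x) \simeq Z_p(\mu)$). Your lower bound is identical to the paper's Lemma \ref{lem_low}, and your upper bound is the paper's argument transported by Lemma \ref{lem:Lambda-Zp} and Bourgain--Milman from $Z_p$-bodies to $\Lambda_p$-bodies, which is a cosmetic change. The one place your sketch is imprecise is the volume comparison $\Lambda_p(\mu) \supseteq c\,\Lambda_p(\mu_{x^*})$. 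You propose to bound the linear shift $\langle\nabla\Lambda_\mu(x^*),\xi\rangle$ by Lemma \ref{lem:prod} applied to $\Lambda_\mu$, via $\nabla\Lambda_\mu(x^*) \in 2p\,\Lambda_p(\mu)^\circ$. But to pair this with the point $z$ you are trying to place inside $\Lambda_p(\mu)$ would be circular: at that stage you only know $z \in \Lambda_p(\mu_{x^*})$. The paper circumvents this (Lemma \ref{lem:F-G}, used in the proof of Proposition \ref{prop:Lambda-iso}) by applying the gradient estimate to $F = \Lambda_{\mu_{x^*}}$ at $y = -x^*$, so the gradient is paired with $z \in \{\Lambda_{\mu_{x^*}} \leq p\}$, where the polar bound legitimately applies. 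This requires one extra check, namely $\Lambda_{\mu_{x^*}}(-2x^*) \lesssim p$, which is where the ingredients you list ($\Lambda_\mu(\pm x^*) \leq p/2$ and $\langle x^*,\nabla\Lambda_\mu(x^*)\rangle \leq 2p$ via Lemma \ref{lem:prod}) actually enter. With this adjustment your argument closes and coincides with the paper's.
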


In the proofs of the theorems stated in the Introduction, we will
not use the full force of Proposition \ref{prop:vr-formula}, but
rather only the lower bound for $\vr(Z_p(\mu))$. This lower bound
has a short proof, as the reader will see below. However, the
observation that we actually obtain an equivalence seems interesting, hence
we provide the arguments for both directions. Before going into the
proof, as a testament of its usefulness, we state the following
immediate corollary of Proposition \ref{prop:vr-formula}:

\begin{cor} Let $\mu$ be a log-concave probability measure on
$\RR^n$ whose barycenter lies at the origin. Then:
\[
1 \leq p \leq q \leq n \;\;\; \Rightarrow \;\;\;
\frac{\vr(Z_p(\mu))}{\sqrt{p}} \geq c \frac{\vr(Z_q(\mu))}{\sqrt{q}}
~.
\]
\end{cor}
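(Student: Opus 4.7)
The plan is to apply the equivalence from Proposition \ref{prop:vr-formula} directly to both $p$ and $q$, and then exploit the elementary monotonicity of the level sets $\Lambda_p(\mu)$ in $p$. There is essentially no new work to do: the corollary is a reformulation of the fact that an infimum over a smaller set dominates the infimum over a larger set.

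More concretely, by Proposition \ref{prop:vr-formula} we have, for any $1 \leq s \leq n$,
\[
\frac{\vr(Z_s(\mu))}{\sqrt{s}} \simeq \inf_{x \in \frac{1}{2}\Lambda_s(\mu)} \detcov(\mu_x)^{\frac{1}{2n}}.
\]
Since $\Lambda_\mu$ is non-negative and convex with $\Lambda_\mu(0)=0$, its sublevel sets $\set{\Lambda_\mu \leq t}$ are nested increasingly in $t$. In particular, for $p \leq q$,
\[
\Lambda_p(\mu) = \set{\Lambda_\mu \leq p} \cap -\set{\Lambda_\mu \leq p} \;\subseteq\; \set{\Lambda_\mu \leq q} \cap -\set{\Lambda_\mu \leq q} = \Lambda_q(\mu),
\]
and therefore $\frac{1}{2}\Lambda_p(\mu) \subseteq \frac{1}{2}\Lambda_q(\mu)$.

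The infimum of the nonnegative function $x \mapsto \detcov(\mu_x)^{1/(2n)}$ over a smaller set is at least the infimum over a larger set, so
\[
\inf_{x \in \frac{1}{2}\Lambda_p(\mu)} \detcov(\mu_x)^{\frac{1}{2n}} \;\geq\; \inf_{x \in \frac{1}{2}\Lambda_q(\mu)} \detcov(\mu_x)^{\frac{1}{2n}}.
\]
Chaining the two applications of Proposition \ref{prop:vr-formula} with this inequality yields the desired bound $\vr(Z_p(\mu))/\sqrt{p} \geq c\, \vr(Z_q(\mu))/\sqrt{q}$, where the universal constant $c > 0$ absorbs the two $\simeq$ constants.

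The only conceivable obstacle is the assumption $p \leq q \leq n$ needed to invoke Proposition \ref{prop:vr-formula} for both values, but this is precisely the hypothesis of the corollary, so nothing further is required.
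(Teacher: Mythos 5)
Your proof is correct and is exactly the argument the paper has in mind when it calls this an ``immediate corollary'' of Proposition~\ref{prop:vr-formula}: apply the equivalence at both $p$ and $q$, use the monotonicity $\Lambda_p(\mu)\subseteq\Lambda_q(\mu)$ (from the non-negativity of $\Lambda_\mu$ so that sublevel sets nest in $p$), and compare the two infima. Nothing to add.
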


\begin{rem} \label{rem:LYZ-1}
Using $q = n$ above and the fact that $\vr(Z_n(K)) \simeq \vr(K)$ for a convex body $K$ whose barycenter lies at the origin,
which follows from (\ref{eq:ZnK}) as in the Introduction, we immediately verify that:
\begin{equation} \label{eq:our-LYZ-K}
\forall 1 \leq p \leq n \;\;\; \vr(Z_p(K)) \geq c \sqrt{\frac{p}{n}}
\vr(K)~.
\end{equation}
This recovers up to a constant the lower bound of Lutwak, Yang and
Zhang (\ref{eq:LYZ-bound}). Moreover, recalling that $\vr(Z_n(\mu))
\simeq \sqrt{n} / \norm{\mu}_{L_\infty}^{\frac{1}{n}}$ by Lemma
\ref{lem:Zn} and the definition $(\ref{eq:Lmu})$ of $L_\mu$, the
same argument yields the following analog of (\ref{eq:our-LYZ-K}):
\[
\forall 1 \leq p \leq n \;\;\; \vr(Z_p(\mu)) \geq c \frac{\sqrt{p}}{L_\mu} \detcov(\mu)^{\frac{1}{2n}} = c \frac{\sqrt{p}}{L_\mu} \vr(Z_2(\mu)) ~.
\]
This may also be deduced by only employing the lower-bound in
(\ref{eq:vr-formula}), as in Remark \ref{rem:LYZ-2}.
\end{rem}

We now turn to the proof of Proposition \ref{prop:vr-formula}, and
begin with the lower bound for $\vr(Z_p(\mu))$. In fact, we show a
formally stronger statement:

\begin{lem} \label{lem_low}
Let $\mu$ denote a log-concave probability measure on $\Real^n$
whose barycenter lies at the origin. Then, for all $1 \leq p \leq
n$,
\[
\vr(Z_p(\mu)) \geq  c \sqrt{p} \sqrt{\Psi_p} ~,
% \inf_{x \in \frac{1}{2} \Lambda_p(\mu)} \detcov(\mu_x)^{\frac{1}{2n}}
\]
where $c > 0$ is a universal constant and:
\[
\Psi_p := \brac{\frac{1}{\vol(\frac{1}{2} \Lambda_p(\mu))} \int_{\frac{1}{2} \Lambda_p(\mu)} \detcov(\mu_x) dx}^\frac{1}{n} ~.
\]
\end{lem}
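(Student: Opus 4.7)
The plan is to feed $F = \Lambda_\mu$ into Lemma~\ref{lem:F-vr} and then transport the resulting bound on $\vr(\Lambda_p(\mu))$ through the polar duality $\Lambda_p(\mu) \simeq p\, Z_p(\mu)^\circ$ of Lemma~\ref{lem:Lambda-Zp}, closing with the Bourgain--Milman reverse Santal\'o inequality for the centrally symmetric body $Z_p(\mu)$.

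First I would verify that $F = \Lambda_\mu$ satisfies the hypotheses of Lemma~\ref{lem:F-vr}: by the discussion preceding Lemma~\ref{lem:Lambda-Zp}, $\Lambda_\mu(0)=0$; because the barycenter of $\mu$ lies at the origin, $\Lambda_\mu \geq 0$ by Jensen; and for log-concave $\mu$ the function $\Lambda_\mu$ is $C^\infty$-smooth and strictly convex on the open set $\{\Lambda_\mu < \infty\}$. The identity (\ref{eq_2329}) gives $\Hess \Lambda_\mu(x) = \cov(\mu_x)$, so $\det \Hess \Lambda_\mu(x) = \detcov(\mu_x)$, which matches the two definitions of $\Psi_p$ (the one in Lemma~\ref{lem:F-vr} with $F=\Lambda_\mu$, and the one in the statement of Lemma~\ref{lem_low}). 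Lemma~\ref{lem:F-vr} then yields
\[
\vr(\Lambda_p(\mu)) \;\leq\; 2 \frac{\sqrt{p}}{\sqrt{\Psi_p}} ~.
\]

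Next, Lemma~\ref{lem:Lambda-Zp} asserts $\Lambda_p(\mu) \simeq p\, Z_p(\mu)^\circ$, which immediately gives $\vr(\Lambda_p(\mu)) \simeq p \cdot \vr(Z_p(\mu)^\circ)$. Combined with the previous display this yields
\[
\vr(Z_p(\mu)^\circ) \;\lesssim\; \frac{1}{\sqrt{p\, \Psi_p}} ~.
\]
Since $h_{Z_p(\mu)}(\theta) = (\int |\langle x,\theta\rangle|^p d\mu)^{1/p}$ is even in $\theta$, the body $Z_p(\mu)$ is centrally symmetric, so the Bourgain--Milman inverse Santal\'o inequality gives $\vr(Z_p(\mu)) \cdot \vr(Z_p(\mu)^\circ) \geq c$ for a universal constant $c>0$. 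Inverting, we conclude
\[
\vr(Z_p(\mu)) \;\geq\; \frac{c}{\vr(Z_p(\mu)^\circ)} \;\gtrsim\; \sqrt{p\, \Psi_p} \;=\; \sqrt{p}\sqrt{\Psi_p} ~,
\]
as desired.

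I do not anticipate serious obstacles: the proof is essentially a bookkeeping exercise linking three packaged tools (Lemma~\ref{lem:F-vr}, Lemma~\ref{lem:Lambda-Zp}, and Bourgain--Milman). The only delicate point is ensuring that $\Lambda_\mu$ is smooth enough in a neighborhood of $\frac{1}{2}\Lambda_p(\mu)$ to invoke Lemma~\ref{lem:F-vr}; but since $\frac{1}{2}\Lambda_p(\mu) \subseteq \Lambda_p(\mu) \subseteq \{\Lambda_\mu < \infty\}^\circ$ (open) by (\ref{eq:Lambda-inclusion}) and the openness of $\{\Lambda_\mu < \infty\}$ for log-concave $\mu$, this is automatic, and positivity of $\Psi_p$ follows from strict convexity of $\Lambda_\mu$ on its domain of finiteness. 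The hypothesis $p \leq n$ plays no role in this lower bound direction; it will be used only in the matching upper bound for Proposition~\ref{prop:vr-formula}.
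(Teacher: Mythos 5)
Your proof is correct and takes essentially the same route as the paper: feed $F=\Lambda_\mu$ into Lemma~\ref{lem:F-vr} via the identity $\Hess\Lambda_\mu = \cov(\mu_x)$ from (\ref{eq_2329}), then combine Lemma~\ref{lem:Lambda-Zp} with Bourgain--Milman. The only cosmetic difference is that the paper applies Bourgain--Milman to $\Lambda_p(\mu)$ while you apply it to $Z_p(\mu)$, which is equivalent given $\Lambda_p(\mu)\simeq p\,Z_p(\mu)^\circ$; also note that positivity of $\Psi_p$ is most cleanly justified not by ``strict convexity'' (which for a $C^2$ function does not force a nondegenerate Hessian pointwise) but directly by (\ref{eq_2329}): $\Hess\Lambda_\mu(x)=\cov(\mu_x)$ is positive definite since $\mu_x$ has a full-dimensional log-concave density.
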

\begin{proof} Apply Lemma \ref{lem:F-vr} with $F = \Lambda_\mu$.
Since $\det \Hess \Lambda_\mu(x) = \detcov(\mu_x)$ according to
(\ref{eq_2329}), we deduce that:
\begin{equation} \label{eq_0005}
\vr(\Lambda_p(\mu)) \leq 2 \frac{\sqrt{p}}{\sqrt{\Psi_p}} ~.
\end{equation}
Applying Lemma \ref{eq_0000} in order to pass from
$\Lambda_p(\mu)$ to $Z_p(\mu)$, and the Bourgain--Milman
inequality (see, e.g., \cite{Pis}) for a centrally-symmetric
convex set $K \subset \RR^n$:
\[
\vr(K^\circ) \vr(K) \geq c~,
\]
%and the  Bourgain--Milman inequality (\ref{eq_2117}),
we deduce from (\ref{eq_0005}) that:
\[
\vr(Z_p(\mu)) \simeq p \vr(\Lambda_p(\mu)^\circ) \gtrsim p
\vr(\Lambda_p(\mu))^{-1} \gtrsim \sqrt{p} \sqrt{\Psi_p} ~.
\]
\end{proof}

In order to deduce the upper bound of Proposition \ref{prop:vr-formula}, and
of crucial importance to the main results of this work, is the
following elementary observation:

\begin{prop} \label{prop:Lambda-iso}
Let $\mu$ denote a log-concave probability measure in $\Real^n$ with
barycenter at the origin. Then:
\[
 \forall x \in \frac{1}{2} \Lambda_p(\mu) \quad , \quad  \Lambda_p(\mu_x) \simeq \Lambda_p(\mu) ~.
\]
\end{prop}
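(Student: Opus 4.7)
The plan is to use the explicit formula (derived in \cite[Section 2]{K_psi2}) for the log-Laplace transform of a tilt:
\[
\Lambda_{\mu_x}(\xi) = \Lambda_\mu(x+\xi) - \Lambda_\mu(x) - \scalar{\nabla \Lambda_\mu(x),\xi}~.
\]
The linear term in $\xi$ is the only obstacle to directly comparing $\Lambda_{\mu_x}(\xi)$ with $\Lambda_\mu(x+\xi)$, and the crucial observation is that it cancels out upon symmetrization: for any $\xi \in \Real^n$,
\[
\Lambda_{\mu_x}(\xi) + \Lambda_{\mu_x}(-\xi) = \Lambda_\mu(x+\xi) + \Lambda_\mu(x-\xi) - 2 \Lambda_\mu(x)~.
\]
Since $\Lambda_{\mu_x}$ is nonnegative, the symmetric level-set $\Lambda_p(\mu_x)$ is equivalent (up to a factor of $2$) to the sublevel set of this symmetrized sum.

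The hypothesis $x \in \tfrac12 \Lambda_p(\mu)$ means $\pm 2x \in \{\Lambda_\mu \leq p\}$, hence by convexity together with $\Lambda_\mu(0)=0$ one obtains $\Lambda_\mu(\pm x) \leq p/2$, and more generally $\Lambda_\mu(\pm tx) \leq t \cdot p/2$ for $t \in [0,1]$. This is the only quantitative use of the assumption on $x$.

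For the inclusion $\Lambda_p(\mu_x) \subseteq C\Lambda_p(\mu)$: take $\xi \in \Lambda_p(\mu_x)$, so $\Lambda_{\mu_x}(\pm\xi)\leq p$. Adding the two gives $\Lambda_\mu(x\pm\xi) \leq 2p + 2\Lambda_\mu(x) \leq 3p$ for each choice of sign, using nonnegativity. Then, writing
\[
\tfrac{\xi}{6} = \tfrac{1}{2}\cdot \tfrac{x+\xi}{3} + \tfrac{1}{2}\cdot \tfrac{-x}{3}
\]
as a convex combination and using convexity of $\Lambda_\mu$ (plus the bound $\Lambda_\mu(\pm x/3)\leq p/6$), one concludes $\Lambda_\mu(\pm \xi/6) \leq p$, i.e.\ $\xi \in 6\Lambda_p(\mu)$.

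For the reverse inclusion $\Lambda_p(\mu) \subseteq C\Lambda_p(\mu_x)$: take $\xi \in \Lambda_p(\mu)$ and write $x\pm\xi/3$ as a convex combination of $\pm\xi$ and $3x/2$, each of which has $\Lambda_\mu$-value at most $p$ (the latter by convexity applied to $2x$). This gives $\Lambda_\mu(x\pm\xi/3) \leq \frac{5p}{6}$. Substituting into the symmetrized formula yields
\[
\Lambda_{\mu_x}(\xi/3) + \Lambda_{\mu_x}(-\xi/3) \leq \tfrac{5p}{3}~,
\]
so by nonnegativity each term is $\leq \tfrac{5p}{3}$, and a final rescaling via (\ref{eq:Lambda-inclusion}) puts $\xi/5 \in \Lambda_p(\mu_x)$.

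The main point, which I expect to be the only subtlety, is recognizing that the nonsymmetric piece $\scalar{\nabla \Lambda_\mu(x),\xi}$—which is not a priori controlled—is rendered harmless by the built-in symmetry of the $\Lambda_p$-body; the rest is a straightforward exercise in scaling, convex combinations, and invoking (\ref{eq:Lambda-inclusion}).
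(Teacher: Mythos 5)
Your proof is correct, and it takes a genuinely different route from the paper's. The paper proves this proposition via its Lemma \ref{lem:F-G}, which in turn relies on Lemma \ref{lem:prod} to bound the troublesome linear term $\scalar{z,\nabla\Lambda_\mu(x)}$ in the tilt identity (\ref{eq:Lambda-tilt}): the inner product with the gradient is controlled directly from the fact that the graph of a convex function lies above its tangent planes, and then the two-sided level-set inclusion is extracted with some care, including a slightly awkward verification in the reverse direction that $-x$ lies in an appropriate sublevel set of $\Lambda_{\mu_x}$ so that one may ``untilt.'' Your argument instead observes that the gradient term cancels identically upon adding $\Lambda_{\mu_x}(\xi)+\Lambda_{\mu_x}(-\xi)$, which is exactly what the built-in symmetry of the body $\Lambda_p$ permits; after that, everything reduces to routine convexity and (\ref{eq:Lambda-inclusion}). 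Your version is more self-contained and, for this particular proposition, somewhat cleaner, since it sidesteps the $\nabla F$ machinery entirely — though Lemma \ref{lem:prod} is still needed elsewhere in the paper (e.g.\ for Lemma \ref{lem:F-vr}), so the paper's packaging amortizes that lemma's cost over several uses. Your final constants ($\Lambda_p(\mu)\subseteq 5\Lambda_p(\mu_x)$ and $\Lambda_p(\mu_x)\subseteq 6\Lambda_p(\mu)$) and the paper's ($4$ and $11$) are both immaterial, and both arguments correctly rely on $\Lambda_{\mu_x}\geq 0$ (which holds because $\mu_x$ is recentered to have barycenter at the origin) when invoking (\ref{eq:Lambda-inclusion}).
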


Indeed, it is clear that the logarithmic Laplace transform should
commute nicely with the tilt operation, and the following identity
is verified by direct calculation:
\begin{equation} \label{eq:Lambda-tilt}
\Lambda_{\mu_x}(z) = \Lambda_\mu(z+x) - \Lambda_\mu(x) -
\scalar{z,b_x} ~, ~ b_x = \nabla \Lambda_\mu(x) ~.
\end{equation}
Geometrically, this means that the graph of $\Lambda_{\mu_x}$
is obtained from that of $\Lambda_\mu$ by subtracting the tangent
plane at $x$ (given by the linear function $z \mapsto \Lambda_\mu(x)
+ \scalar{z-x,\nabla \Lambda_\mu(x)}$), and translating everything by
$-x$ (so that $x$ gets mapped to the origin). In particular, we
verify that $\Lambda_{\mu_x}(0) = 0$ and that $\Lambda_{\mu_x} \geq
0$, as required from the logarithmic Laplace transform of a probability measure
with barycenter at the origin.

It remains to manipulate level sets of convex functions, once again. We
require the following:

\begin{lem} \label{lem:F-G}
Let $F$ be as in Lemma \ref{lem:prod}, and let $y \in \Real^n$ and $D, p > 0$. Define a function $G$ by:
\[
G(z) := F(z+y) - F(y) - \scalar{z,\nabla F(y)}~.
\]
Then:
\[
y \in \frac{1}{2} \set{F \leq D p }, \ z \in \set{F \leq p} \cap -\set{F \leq p} \quad \Longrightarrow \quad z \in 2 \set{G \leq
(D+1)p} ~.
\]
\end{lem}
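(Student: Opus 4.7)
The plan is to show directly that $G(z/2) \leq (D+1)p$, which is equivalent to the desired conclusion $z \in 2 \set{G \leq (D+1)p}$. To do so, I would split $G(z/2) = F(z/2 + y) - F(y) - \scalar{z/2, \nabla F(y)}$ into its three terms and bound each separately by $(D+1)p/2$ (using $-F(y) \leq 0$ for the middle term, since $F$ is non-negative).

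For the first term, $F(z/2+y) = F\!\left(\frac{z + 2y}{2}\right)$, so convexity of $F$ gives
\[
F(z/2+y) \leq \frac{1}{2}F(z) + \frac{1}{2}F(2y) \leq \frac{p}{2} + \frac{Dp}{2} = \frac{(D+1)p}{2},
\]
using $F(z) \leq p$ (from $z \in \set{F \leq p}$) and $F(2y) \leq Dp$ (from $y \in \frac{1}{2}\set{F \leq Dp}$).

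For the third term, I would appeal to Lemma \ref{lem:prod}, but applied to the point $-z$ rather than $z$: since $-z \in \set{F \leq p}$ by the symmetric hypothesis on $z$, and $y \in \frac{1}{2}\set{F \leq Dp}$, the lemma with $r = p$ and $q = Dp$ yields $\scalar{-z, \nabla F(y)} \leq (D+1)p$, i.e. $-\scalar{z/2,\nabla F(y)} \leq (D+1)p/2$.

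Adding the three contributions gives $G(z/2) \leq (D+1)p$, which is precisely what we want. The only mildly tricky point is realizing that we need the bound on $-\scalar{z/2, \nabla F(y)}$ rather than on $\scalar{z/2, \nabla F(y)}$ itself, and that this is exactly where the assumption $z \in \set{F \leq p} \cap -\set{F \leq p}$ (rather than just $z \in \set{F \leq p}$) enters; the rest of the argument is just convexity of $F$ and non-negativity.
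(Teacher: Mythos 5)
Your proposal is correct and is essentially the same as the paper's own proof: both bound $G(z/2)$ by dropping $-F(y) \leq 0$, applying Lemma \ref{lem:prod} with $q = Dp$, $r = p$ to the point $-z$ to control the gradient term, and using convexity to estimate $F(z/2+y) \leq \tfrac{1}{2}(F(z) + F(2y))$. The only difference is cosmetic ordering of the steps.
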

\begin{proof} 
We apply Lemma \ref{lem:prod} with $q = Dp$ and $r=p$. Since $-z
\in \set{F \leq p}$ and $y \in \frac{1}{2} \set{F \leq D p}$, then by the conclusion of that lemma, $\scalar{-z,\nabla
F(y)} \leq (D+1)p$. Since $F$ is non-negative and convex, we deduce
that:
\[
G(z/2) \leq F(z/2+y) + \frac{D+1}{2} p \leq \frac{F(z) + F(2y)}{2} +
\frac{D+1}{2} p \leq (D+1) p ~.
\]
\end{proof}

\begin{proof}[Proof of Proposition \ref{prop:Lambda-iso}]
\hfill
\begin{enumerate}
\item If $z \in \Lambda_p(\mu)$, we apply Lemma \ref{lem:F-G} with $D=1$ and $y=x$ to $F = \Lambda_\mu$. By (\ref{eq:Lambda-tilt}), we deduce that $\Lambda_{\mu_x}(z/2) = G(z/2) \leq 2p$. Using (\ref{eq:Lambda-inclusion}), we conclude that $\Lambda_{\mu_x}(z/4) \leq p$. The same argument applies to $-z$ by the symmetry of our assumptions, and so we conclude that $z \in 4 \Lambda_p(\mu_x)$.

\item If $z \in \Lambda_p(\mu_x)$, we would like to apply Lemma \ref{lem:F-G} with $y=-x$ to $F = \Lambda_{\mu_x}$, since tilting $\mu_x$ by $-x$ gives back $\mu$. To this end, we must verify that $\Lambda_{\mu_x}(-2x) \leq D p$ for some $D>0$. According to (\ref{eq:Lambda-tilt}):
\[
\Lambda_{\mu_x}(-2x) = \Lambda_\mu(-x) - \Lambda_\mu(x) +
2\scalar{x,\nabla \Lambda_\mu(x)}~.
\]
By Lemma \ref{lem:prod}, we know that $\scalar{x,\nabla
\Lambda_\mu(x)} \leq 2p$, and using that $\Lambda_\mu$ is
non-negative, convex and vanishes at the origin, we obtain:
\[
\Lambda_{\mu_x}(-2x) \leq \frac{1}{2} \Lambda_\mu(-2x) + 4p \leq 4.5
p ~.
\]
We conclude that we may use $D=4.5$ above, and so Lemma
\ref{lem:F-G} finally implies that $\Lambda_{\mu}(z/2) = G(z/2) \leq
5.5 p$. As in the first part of the proof, we deduce that
$\Lambda_{\mu}(z/11) \leq p$. The same argument applies to $-z$ by
the symmetry of our assumptions, and so we conclude that $z \in
11 \Lambda_p(\mu)$.
\end{enumerate}
\end{proof}

Using Lemma \ref{eq_0000}, we equivalently reformulate Proposition
\ref{prop:Lambda-iso} as:

\begin{prop} \label{prop:Zp-iso}
Let $\mu$ denote a log-concave probability measure in $\Real^n$ with
barycenter at the origin. Then:
\[
 \forall x \in \frac{1}{2} \Lambda_p(\mu) \quad , \quad Z_p(\mu_x) \simeq Z_p(\mu) ~.
 \]
\end{prop}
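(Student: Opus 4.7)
The plan is to deduce Proposition \ref{prop:Zp-iso} directly from Proposition \ref{prop:Lambda-iso} via the $\Lambda_p$--$Z_p$ duality established in Lemma \ref{eq_0000}. The only thing to check before invoking Lemma \ref{eq_0000} on the tilted measure is that $\mu_x$ itself satisfies the hypotheses of that lemma: it is log-concave (since the density $\rho_\xi(\cdot-b_\xi)$ is the product of the log-concave $\rho$ with an exponential, up to a translation, all of which preserve log-concavity), and by construction its barycenter lies at the origin.

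With that in hand, the argument proceeds in three short steps. First, apply Lemma \ref{eq_0000} to both $\mu$ and $\mu_x$ to obtain
\[
\Lambda_p(\mu) \simeq p\, Z_p(\mu)^\circ, \qquad \Lambda_p(\mu_x) \simeq p\, Z_p(\mu_x)^\circ .
\]
Second, apply Proposition \ref{prop:Lambda-iso} (which is valid precisely for $x \in \tfrac{1}{2}\Lambda_p(\mu)$) to get $\Lambda_p(\mu_x) \simeq \Lambda_p(\mu)$, and hence
\[
Z_p(\mu_x)^\circ \simeq Z_p(\mu)^\circ .
\]
Third, take polars: if $c K \subseteq T \subseteq C K$ for centrally-symmetric convex bodies $K,T$ with origin in the interior, then $\tfrac{1}{C} K^\circ \subseteq T^\circ \subseteq \tfrac{1}{c} K^\circ$, so the $\simeq$ relation is preserved under polarity. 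Since $Z_p(\mu)$ and $Z_p(\mu_x)$ are centrally symmetric convex bodies (their support functions $h_{Z_p}$ are norms, by definition) containing the origin in their interiors, we may use $(K^\circ)^\circ = K$ to conclude $Z_p(\mu_x) \simeq Z_p(\mu)$.

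There is no serious obstacle: the content has already been packaged in Proposition \ref{prop:Lambda-iso}, and this statement is only its translation through the dictionary provided by Lemma \ref{eq_0000}. The only care needed is the routine verification that tilting preserves the running hypotheses (log-concavity and barycenter at the origin), which is immediate from the definition of $\mu_x$.
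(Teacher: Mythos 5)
Your proposal is correct and is exactly the paper's argument: the paper simply states that Proposition \ref{prop:Zp-iso} is obtained by reformulating Proposition \ref{prop:Lambda-iso} through Lemma \ref{eq_0000}. You have merely spelled out the routine verifications (that $\mu_x$ inherits log-concavity and centered barycenter, and that polarity preserves the $\simeq$ relation for centrally-symmetric bodies), which the paper leaves implicit.
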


To complete the proof of Proposition \ref{prop:vr-formula}, we state again Paouris' upper bound (\ref{eq:vr-upper}) on $\vr(Z_p(\nu))$:
\begin{prop}[Paouris] \label{prop:Paouris}
For any log-concave probability measure $\nu$ with barycenter at the
origin, and $2 \leq p \leq n$:
\[
\vr(Z_p(\nu)) \leq C \sqrt{p} \vr(Z_2(\nu))~.
\]
\end{prop}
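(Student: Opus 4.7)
The plan is to reduce to the isotropic case and then invoke Paouris' original technique, since this is a classical result of Paouris quoted here as an external input. By the linear invariance of the ratio $\vr(Z_p(\nu))/\vr(Z_2(\nu))$ under affine transformations of $\nu$, I may assume $\nu$ is isotropic, so that $Z_2(\nu) = B_n$ and the task reduces to establishing $\vr(Z_p(\nu)) \leq C\sqrt{p}$ for all $2 \leq p \leq n$. Berwald's inequality in the form (\ref{eq:Zp-inclusion}) immediately gives the trivial estimate $Z_p(\nu) \subseteq (Cp/2) Z_2(\nu)$, and hence $\vr(Z_p(\nu)) \leq Cp$; the entire content of the statement is to improve this by a factor of $\sqrt{p}$.

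The improvement comes from the following heuristic: although $h_{Z_p(\nu)}(\theta) \leq C p$ in the worst direction, for a typical $\theta \in S^{n-1}$ one has $h_{Z_p(\nu)}(\theta) \lesssim \sqrt{p}$, because the $p$-th moment of $\langle x,\theta\rangle$ concentrates near $p^{p/2}$ on the sphere rather than at $p^p$. To turn this heuristic into a volume bound, I would proceed in two phases. First, for the range $p \leq c\sqrt{n}$, I would use Paouris' concentration inequality for the Euclidean norm, $(\int |x|^p\,d\nu(x))^{1/p} \leq C\sqrt{n}$, combined with a Jensen/Fubini interchange on the sphere: the quantity $\int_{S^{n-1}} h_{Z_p(\nu)}^p\,d\sigma = \int_{\RR^n} \int_{S^{n-1}} |\langle x,\theta\rangle|^p\,d\sigma(\theta)\,d\nu(x)$ can be computed exactly and is of order $(p/n)^{p/2}$ times $\int |x|^p d\nu$. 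Feeding this into a Dudley/Milman-type passage from mean width to volume (via the Blaschke--Santaló and Bourgain--Milman inequalities already used in the proof of Lemma \ref{lem_low}) yields the desired order $\sqrt{p}$.

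Second, for the complementary range $c\sqrt{n} \leq p \leq n$, I would bootstrap by following the hierarchy of $Z_q$-bodies: choosing a suitable subspace $E$ of dimension $k \simeq n/p$ via Milman's quotient-of-subspace theorem or an $M$-position argument, one reduces the question on $Z_p(\nu)$ to a lower-dimensional question on $Z_p(\pi_E \nu) = \pi_E Z_p(\nu)$ where $\pi_E \nu$ is again isotropic and log-concave. There the value of $p$ will be of order the ambient dimension, placing us back into the regime controllable by Phase 1 on the subspace, and the dimensions combine to give the claimed global bound.

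The main obstacle is Phase 2, the bootstrap through Paouris' parameter $q^*(\nu)$ (equivalently, $q^\#$ as discussed in Section \ref{sec4} of the present work): it requires a careful tracking of how the Dvoretzky-type dimension of $Z_p(\nu)$ grows with $p$, and is precisely the technical heart of Paouris' work that does not admit a short self-contained reproduction. This is why the paper is content to quote (\ref{eq:vr-upper}) as a black box at this stage, using only its statement to conclude the upper half of Proposition \ref{prop:vr-formula}.
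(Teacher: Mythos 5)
Your proposal matches the paper's proof exactly in its essential content: the statement is invariant under linear transformations, so one reduces to the isotropic case, and the result is then quoted as \cite[Theorem 6.2]{Paouris-IsotropicTail} -- an external black box, exactly as you conclude in your final paragraph. The additional two-phase sketch of Paouris' argument is supplementary and not part of the paper's proof; in particular your Phase 2 bootstrap (projecting to a subspace of dimension $k \simeq n/p$) does not accurately reproduce Paouris' actual method and would leave $p$ larger than the new ambient dimension, but since you explicitly defer to the citation rather than rely on the sketch, this does not affect the correctness of your reduction.
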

\begin{proof}
The statement is invariant under linear transformations, so we may
assume that $\nu$ is isotropic. The claim is then the content of
\cite[Theorem 6.2]{Paouris-IsotropicTail}.
\end{proof}

\begin{proof}[Proof of Proposition \ref{prop:vr-formula}]
Lemma \ref{lem_low} implies the lower bound:
\[
\vr(Z_p(\mu)) \geq c \sqrt{p} \inf_{x \in
\frac{1}{2} \Lambda_p(\mu)} \detcov(\mu_x)^{\frac{1}{2n}}~.
\]
Since $\detcov(\mu_x)^{\frac{1}{2n}} = \vr(Z_2(\mu_x))$, then applying
Proposition \ref{prop:Paouris}, we obtain:
\begin{equation}
\inf_{x \in
\frac{1}{2} \Lambda_p(\mu)} \vr(Z_p(\mu_x)) \leq C \sqrt{p} \inf_{x \in \frac{1}{2}
\Lambda_p(\mu)} \detcov(\mu_x)^{\frac{1}{2n}}~. \label{eq_945}
\end{equation}
But by Proposition \ref{prop:Zp-iso}, $Z_p(\mu_x) \simeq Z_p(\mu)$ for all $x \in \frac{1}{2}
\Lambda_p(\mu)$, and hence the left-hand side in (\ref{eq_945}) is equivalent
to $\vr(Z_p(\mu))$, completing the proof.
\end{proof}

\begin{rem}
It follows that all of the inequalities which we used in the proof
of Proposition \ref{prop:vr-formula} above, are actually
equivalences up to numeric constants. This fact has some interesting consequences; we omit a detailed account of these here, and only remark on the following point. Given $1 \leq p \leq n$, denote:
\[
x_p := \text{argmin}_{x \in \frac{1}{2} \Lambda_p(\mu)} \detcov(\mu_x)^{\frac{1}{2n}} ~,
\]
so that $\mu_{x_p}$ is the ``worst" tilt we need to account for when evaluating $\vr(Z_p(\mu))$. It follows that for this tilt:
\[
\vr(Z_p(\mu_{x_p})) \simeq \sqrt{p} \vr(Z_2(\mu_{x_p})) ~,
\]
and in particular, the argument described in Subsection
\ref{subsec:slicing} implies that $L_{\mu_{x_p}} \leq C \sqrt{n /
p}$. It is interesting to compare this with the approach from
\cite{quarter} for resolving the isomorphic slicing problem. The
latter approach is in some sense dual to our current one, since in
this work our goal will be to bound $\detcov(\mu_{x_p})^{\frac{1}{2n}}$ from
below, whereas the goal in \cite{quarter} was to bound this
expression from above. Compare also with Remark \ref{rem:LYZ-2}.
\end{rem}

%%%%%%%%%%%%%%%%%%%%%%%%%%%%%%%%%%%%%%%%%%%%%%%%%%%%%%%%%%%%%%%%%%%%%%%%%%%%%%%%%%%%%%%%%%%%%%%%%%%%%%%%%%%%%%%%%%%%%%%%%%%%%%%%%%%%%%%%%%%%%%%%%%%%%%%%%%%%%%%%%%
%%%%%%%%%%%%%%%%%%%%%%%%%%%%%%%%%%%%%%%%%%%%%%%%%%%%%%%%%%%%%%%%%%%%%%%%%%%%%%%%%%%%%%%%%%%%%%%%%%%%%%%%%%%%%%%%%%%%%%%%%%%%%%%%%%%%%%%%%%%%%%%%%%%%%%%%%%%%%%%%%%
%%%%%%%%%%%%%%%%%%%%%%%%%%%%%%%%%%%%%%%%%%%%%%%%%%%%%%%%%%%%%%%%%%%%%%%%%%%%%%%%%%%%%%%%%%%%%%%%%%%%%%%%%%%%%%%%%%%%%%%%%%%%%%%%%%%%%%%%%%%%%%%%%%%%%%%%%%%%%%%%%%
%%%%%%%%%%%%%%%%%%%%%%%%%%%%%%%%%%%%%%%%%%%%%%%%%%%%%%%%%%%%%%%%%%%%%%%%%%%%%%%%%%%%%%%%%%%%%%%%%%%%%%%%%%%%%%%%%%%%%%%%%%%%%%%%%%%%%%%%%%%%%%%%%%%%%%%%%%%%%%%%%%
%%%%%%%%%%%%%%%%%%%%%%%%%%%%%%%%%%%%%%%%%%%%%%%%%%%%%%%%%%%%%%%%%%%%%%%%%%%%%%%%%%%%%%%%%%%%%%%%%%%%%%%%%%%%%%%%%%%%%%%%%%%%%%%%%%%%%%%%%%%%%%%%%%%%%%%%%%%%%%%%%%
%%%%%%%%%%%%%%%%%%%%%%%%%%%%%%%%%%%%%%%%%%%%%%%%%%%%%%%%%%%%%%%%%%%%%%%%%%%%%%%%%%%%%%%%%%%%%%%%%%%%%%%%%%%%%%%%%%%%%%%%%%%%%%%%%%%%%%%%%%%%%%%%%%%%%%%%%%%%%%%%%%
%%%%%%%%%%%%%%%%%%%%%%%%%%%%%%%%%%%%%%%%%%%%%%%%%%%%%%%%%%%%%%%%%%%%%%%%%%%%%%%%%%%%%%%%%%%%%%%%%%%%%%%%%%%%%%%%%%%%%%%%%%%%%%%%%%%%%%%%%%%%%%%%%%%%%%%%%%%%%%%%%%
%%%%%%%%%%%%%%%%%%%%%%%%%%%%%%%%%%%%%%%%%%%%%%%%%%%%%%%%%%%%%%%%%%%%%%%%%%%%%%%%%%%%%%%%%%%%%%%%%%%%%%%%%%%%%%%%%%%%%%%%%%%%%%%%%%%%%%%%%%%%%%%%%%%%%%%%%%%%%%%%%%
%%%%%%%%%%%%%%%%%%%%%%%%%%%%%%%%%%%%%%%%%%%%%%%%%%%%%%%%%%%%%%%%%%%%%%%%%%%%%%%%%%%%%%%%%%%%%%%%%%%%%%%%%%%%%%%%%%%%%%%%%%%%%%%%%%%%%%%%%%%%%%%%%%%%%%%%%%%%%%%%%%

\section{On Paouris' definition of $q^*$}
\label{sec4}

Given a centrally-symmetric convex body $K \subset \Real^n$, its
``(dual) Dvoretzky-dimension" $k^*(K)$ was defined by V. Milman and
G. Schechtman \cite{MilmanSchechtmanSharpDvorDim} as the largest
positive integer $k \leq n$ so that:
\[
\sigma_{n,k}\set{E \in G_{n,k} ; \frac{1}{2} W(K) B_E \subset Proj_E
K \subset 2 W(K) B_E } \geq \frac{n}{n+k} ~,
\]
where $\sigma_{n,k}$ denotes the Haar probability measure on
$G_{n,k}$ and $B_E$ denotes the Euclidean unit ball in the subspace
$E$. It was shown in \cite{MilmanSchechtmanSharpDvorDim}, following
Milman's seminal work \cite{Mil71}, that:
\begin{equation} \label{eq:k*}
k^*(K) \simeq n \brac{\frac{W(K)}{\diam(K)}}^2 ~.
\end{equation}
Define $W_q(K) = \left( \int_{S^{n-1}} h_K(\theta)^q d
\sigma(\theta) \right)^{\frac{1}{q}}$, the $q$-th moment of the supporting
functional of $K$. According to Litvak, Milman and Schechtman
\cite{LMS}:
\begin{equation} \label{eq:LMS1}
c_1 W_q(K) \leq \max \left \{ W(K) , \sqrt{q/n} \; \;
\diam(K) \right \} \leq c_2 W_q(K) ~.
\end{equation}
The quantity $W_q(Z_q(\mu))$ has a simple equivalent description: a
direct calculation as in \cite{Paouris-Small-Diameter} confirms that
for any Borel probability measure $\mu$ on $\RR^n$ and $q \geq 1$:
\begin{equation} \label{WQZQ}
W_q(Z_q(\mu)) \simeq
\frac{\sqrt{q}}{\sqrt{n+q}} I_q(\mu) ~ ~ , ~ ~ I_q(\mu) := \brac{ \int_{\Real^n} |x|^q d\mu(x) }^{\frac{1}{q}} ~.
\end{equation}
Finally, observe that when the barycenter of $\mu$ is at the origin, then $I_2(\mu)^2 = \trcov(\mu)$.

\medskip

In \cite{Paouris-IsotropicTail}, Paouris defines $q^*(\mu)$ as
follows:
\[
q^*(\mu) := \sup\set{q \in \mathbb{N} ; k^*(Z_q(\mu)) \geq q} ~.
\]
It is straightforward to check that all of Paouris' results
involving $q^*(\mu)$ from
\cite{Paouris-IsotropicTail,PaourisSmallBall} remain valid when
replacing it with $q^*_c(\mu)$ when $c > 0$  is a fixed universal
constant, where $q^*_\delta$ is defined as follows:
\[
q^*_\delta(\mu) := \sup\set{q \geq 1 ; k^*(Z_q(\mu)) \geq
\delta^{-2} q} ~.
\]
Although the particular value of $c>0$ seems insignificant for the
results of \cite{Paouris-IsotropicTail,PaourisSmallBall},
%we will see in Section \ref{sec:discussion} that the question of equivalence of $q^*_{c_1}$ and $q^*_{c_2}$ for some small and big enough values of $c_1$ and $c_2$, respectively, is actually intimately connected to the Slicing Problem. Moreover,
the definition we require in this work is essentially that of $q^*_c$ for some \emph{small enough} universal
constant $c>0$. Our preference to work with a variant of $q^*_c$ is motivated by Lemma \ref{lem:P3-equiv} below and
the subsequent remarks.

\medskip

We proceed as follows. Given a log-concave probability measure $\mu$
on $\RR^n$, $q \geq 1$ and $\delta>0$, consider the following four
related properties:
\begin{enumerate}
\item $P_1(\delta)$ is the property that $k^*(Z_q(\mu)) \geq \delta^{-2} q$.
\item $P_1'(\delta)$ is the property that $\diam(Z_q(\mu)) \leq \delta \sqrt{n} \frac{W(Z_q(\mu))}{\sqrt{q}}$.
\item $P_2(\delta)$ is the property that $\diam(Z_q(\mu)) \leq \delta \sqrt{n} \detcov(\mu)^{\frac{1}{2n}}$.
\item $P_W$ is the property that $W(Z_q(\mu)) \geq c \sqrt{q} \detcov(\mu)^{\frac{1}{2n}}$, for some specific, appropriately small universal constant $c>0$, as in the proof of Lemma \ref{lem:P3-equiv}(2) below.
%\item $P_3(\delta)$ is the property that $P_2(\delta)$ and $P_W$ hold.
\end{enumerate}

According to (\ref{eq:k*}), we have:
\begin{equation} \label{eq:P_1-P_1'}
P_1(\delta) \Rightarrow P_1'(C_1 \delta) \Rightarrow P_1(C_2 \delta)
~,
\end{equation}
for all $\delta > 0$, where $C_1, C_2 > 1$ are universal constants. The next lemma relates between the other
properties above:

\begin{lem} \label{lem:P3-equiv} Suppose $\mu$ is a log-concave probability measure in $\RR^n$ whose barycenter lies at the origin.
Let $q \in [1,n]$ and $\delta \in (0,1]$. Then:

\begin{enumerate}
\item If $\mu$ is isotropic and $P_1(\delta)$ holds, then $P_2(C_3 \delta)$ holds.
\item
\begin{enumerate}
\item If $P_1'(\delta)$ holds, then so does $P_W$.
\item Suppose $\delta < \delta_0$ for a certain appropriately small universal constant $\delta_0 > 0$. \\ If $P_2(\delta)$ holds, then so does $P_W$.
\end{enumerate}
\item If $P_2(\delta)$ and $P_W$ hold, then so does $P_1'(C_4 \delta)$.
\end{enumerate}
%Here $C, \tilde{C}, \delta_0 > 0$ are universal constants.
\end{lem}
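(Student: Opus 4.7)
The plan rests on two identities already available in the excerpt: inequality (\ref{eq:LMS1}) comparing $W$, $\sqrt{q/n}\,\diam$ and $W_q$, and the identity (\ref{WQZQ}) which reads $W_q(Z_q(\mu)) \simeq \sqrt{q/(n+q)}\,I_q(\mu)$. The key elementary observation I would record first is the lower bound
\[
W_q(Z_q(\mu)) \gtrsim \sqrt{q}\,\detcov(\mu)^{\frac{1}{2n}} \qquad (1 \leq q \leq n),
\]
which follows from the monotonicity $I_q(\mu) \geq I_2(\mu) = \sqrt{\trcov(\mu)}$ together with AM--GM applied to the eigenvalues of $\cov(\mu)$, giving $\trcov(\mu) \geq n\,\detcov(\mu)^{1/n}$. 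This serves as the engine behind parts (2) and (3).

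Parts (2) and (3) are then essentially one-line consequences. For part (3), $P_2(\delta)$ bounds $\diam(Z_q(\mu))$ from above by $\delta\sqrt{n}\,\detcov(\mu)^{\frac{1}{2n}}$ while $P_W$ bounds $W(Z_q(\mu))$ from below by $c\sqrt{q}\,\detcov(\mu)^{\frac{1}{2n}}$, and dividing gives $\diam/W \leq (\delta/c)\sqrt{n/q}$, i.e., $P_1'(C_4\delta)$ with $C_4 = 1/c$. For part (2)(a), $P_1'(\delta)$ with $\delta \leq 1$ yields $\sqrt{q/n}\,\diam(Z_q(\mu)) \leq W(Z_q(\mu))$, so the maximum in (\ref{eq:LMS1}) is comparable to $W$, and combined with the engine we obtain $W(Z_q(\mu)) \gtrsim \sqrt{q}\,\detcov(\mu)^{\frac{1}{2n}}$. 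For part (2)(b), $P_2(\delta)$ instead provides $\sqrt{q/n}\,\diam(Z_q(\mu)) \leq \delta\sqrt{q}\,\detcov(\mu)^{\frac{1}{2n}}$; the engine still forces the maximum $\max\{W, \sqrt{q/n}\,\diam\}$ to be $\gtrsim \sqrt{q}\,\detcov(\mu)^{\frac{1}{2n}}$ via (\ref{eq:LMS1}), and choosing $\delta_0$ strictly smaller than the implicit constant here forces this maximum to be attained by the $W$-term, yielding $P_W$. The precise value of $\delta_0$ and of the constant $c$ in $P_W$ are fixed by this calculation.

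For part (1), I would first apply (\ref{eq:P_1-P_1'}) to pass from $P_1(\delta)$ to $P_1'(C_1\delta)$, i.e., $\diam(Z_q(\mu)) \leq C_1\delta\sqrt{n/q}\,W(Z_q(\mu))$. Since $\mu$ is isotropic, $\detcov(\mu)^{\frac{1}{2n}} = 1$, so the conclusion $P_2(C_3\delta)$ reduces to the upper bound $W(Z_q(\mu)) \lesssim \sqrt{q}$. This is the one place requiring a genuine external input: $P_1(\delta)$ with $\delta \leq 1$ gives $k^*(Z_q(\mu)) \geq q$, hence $q \leq q^*(\mu)$ by the very definition recalled above, and Paouris' theorem from \cite{Paouris-IsotropicTail} for isotropic log-concave $\mu$ then gives $I_q(\mu) \lesssim \sqrt{n}$ in this range; Jensen's inequality together with (\ref{WQZQ}) therefore yields $W(Z_q(\mu)) \leq W_q(Z_q(\mu)) \simeq \sqrt{q/n}\,I_q(\mu) \lesssim \sqrt{q}$, as required. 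The main (and essentially only) obstacle is this invocation of Paouris' bound on $I_q(\mu)$ for isotropic log-concave measures; all other implications are formal manipulations of (\ref{eq:LMS1}), (\ref{WQZQ}), and AM--GM.
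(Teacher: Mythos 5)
Your proposal is correct and follows essentially the same route as the paper's own proof: the ``engine'' estimate $W_q(Z_q(\mu)) \gtrsim \sqrt{q}\,\detcov(\mu)^{1/(2n)}$ from (\ref{WQZQ}), $I_q \geq I_2$, and AM--GM is exactly the paper's display (\ref{eq_330}), and the treatments of parts (2a), (2b), (3), and (1) (including passing through $P_1'(C_1\delta)$ and invoking Paouris' $I_q(\mu)\simeq I_2(\mu)$ for $q$ with $k^*(Z_q)\geq q$) coincide with the paper's argument step for step.
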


\begin{proof} \hfill
\begin{enumerate}
\item
Clearly $P_1(\delta)$ implies $P_1(1)$. Using (\ref{WQZQ}), Paouris's main result \cite[Theorem 8.1]{Paouris-IsotropicTail} and the isotropicity of $\mu$, we know that:
\[
W_q(Z_q(\mu)) \simeq \frac{\sqrt{q}}{\sqrt{n}} I_q(\mu) \simeq \frac{\sqrt{q}}{\sqrt{n}} I_2(\mu) = \frac{\sqrt{q}}{\sqrt{n}} \brac{\trcov(\mu)}^{\frac{1}{2}} = \sqrt{q} ~.
\]
In particular, $W(Z_q(\mu)) \leq W_q(Z_q(\mu)) \leq C \sqrt{q}$. Since
$P_1(\delta)$ implies $P_1'(C_1 \delta)$, then:
\[
\diam(Z_q(\mu)) \leq C_1 \delta \sqrt{n} \frac{W(Z_q(\mu))}{\sqrt{q}} \leq C C_1 \delta \sqrt{n} = C_3 \delta \sqrt{n} \detcov(\mu)^{\frac{1}{2n}}  ~,
\]
and $P_2(C_3 \delta)$ holds true.
\item
Since all properties are invariant under scaling, we may assume that $\detcov(\mu) = 1$.
Using (\ref{WQZQ}) and the arithmetic-geometric mean inequality:
\[
\frac{1}{n} I_2(\mu)^2 = \frac{1}{n} \trcov(\mu) \geq \detcov(\mu)^{\frac{1}{n}} ~,
\]
we see that:
\begin{equation} \label{eq_330}
W_q(Z_q(\mu)) \geq c_0 \frac{\sqrt{q}}{\sqrt{n}} I_q(\mu) \geq c_0 \frac{\sqrt{q}}{\sqrt{n}} I_2(\mu) \geq c_0 \sqrt{q} ~.
\end{equation}
\begin{enumerate}
\item
Assuming $P_1'(\delta)$, (\ref{eq:LMS1}) implies that $W(Z_q(\mu)) \geq c_1 W_q(Z_q(\mu))$, and together with (\ref{eq_330}), $P_W$ follows.
\item
Set $\delta_0 = c_0 c_1$, where $c_0$ is the constant from (\ref{eq_330}) and $c_1$ is the constant from (\ref{eq:LMS1}).
Using (\ref{eq_330}), the property $P_2(\delta)$ with $0 < \delta < \delta_0$ implies:
$$
\frac{\sqrt{q}}{\sqrt{n}} \diam(Z_q(\mu))
\leq \delta \sqrt{q} < c_0 c_1 \sqrt{q} \leq c_1 W_q(Z_q(\mu)) ~.
$$
Therefore by (\ref{eq:LMS1}), $W(Z_q(\mu)) \geq c_1 W_q(Z_q(\mu))
\geq c_0 c_1 \sqrt{q}$, and $P_W$ follows.
\end{enumerate}
\item
This is immediate by plugging the estimates on $\diam(Z_q(\mu))$ and $W(Z_q(\mu))$ into the definition of $P_1'(\delta)$. %(\ref{eq:plug-into}).
\end{enumerate}
\end{proof}

\begin{rem}
Inspecting the proof, one may check that the assumption that $\delta \leq 1$ is not essential
for the proof of parts (1), (2a) and (3), if one allows
different dependence on $\delta$ in the conclusion of the
assertions. However, the assumption that $\delta < \delta_0$ was \emph{crucially} used in
the proof of part (2b).
\end{rem}

We conclude from Lemma \ref{lem:P3-equiv} and
(\ref{eq:P_1-P_1'}) that $P_1(\delta)$ implies all the other
properties if $\mu$ is isotropic, and that $P_2(\delta)$ implies all
the other properties if $\delta$ is small enough. Neither of these
restrictions are essential for the purposes of this work, but
nevertheless we prefer to proceed with the more accessible
$P_2(\delta)$ property, since in addition and in contrast to the
$P_1(\delta)$ one, it is more stable in the following sense:
\begin{enumerate}
\item For any $\mu$, if $P_2(\delta)$ holds for $q$, then it also holds for all $p$ with $1 \leq p < q$.
\item If $\mu$ is isotropic and $P_2(\delta)$ holds for $\mu$ with $q$, then $P_2(\delta \sqrt{n / k})$ holds for
$\pi_E \mu$ with $q$, simply because $Z_q(\pi_E \mu) = Proj_E
Z_q(\mu)$ for all $E \in G_{n,k}$.
\end{enumerate}

%Since the function $q \mapsto \Delta_\mu(q) := \diam(Z_q(\mu))$ is strictly increasing on $\Real_+$,
Consequently, we make the following:
\begin{dfn*}
\begin{eqnarray*}
q^{\sharp}(\mu) & := & \sup \set{q \geq 1 \; ; \; \diam(Z_q(\mu)) \leq c^\sharp \sqrt{n} \detcov(\mu)^{\frac{1}{2n}}} \\
& = & \Delta_\mu^{-1}(c^\sharp \sqrt{n} \detcov(\mu)^{\frac{1}{2n}})
~,
%\inf \set{q \geq 1 \; ; \; \diam(Z_q(\mu)) \geq c^\sharp \sqrt{n} \detcov(\mu)^{\frac{1}{2n}}} ~,
\end{eqnarray*}
where $[1,\infty) \ni q \mapsto \Delta_\mu(q) := \diam(Z_q(\mu))$
and $c^\sharp>0$ is a small enough constant, to be prescribed in
Lemma \ref{lem:q-sharp} below. \\
As a convention, if $\diam(Z_1(\mu)) \geq c^\sharp \sqrt{n} \detcov(\mu)^{\frac{1}{2n}}$, we set $q^\sharp(\mu) = 1$.
\end{dfn*}

\begin{lem} \label{lem:q-sharp}
We may choose the numeric constant $c^\sharp > 0$ small enough so
that:
\begin{enumerate}
\item $q^\sharp(\mu) \leq n$.
\item $1 \leq q \leq q^\sharp(\mu)$ implies $k^*(Z_q(\mu)) \geq q$ and $W(Z_q(\mu)) \geq c \sqrt{q} \detcov(\mu)^{\frac{1}{2n}}$.
\end{enumerate}
\end{lem}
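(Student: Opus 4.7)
The plan is to chain the implications $P_2 \Rightarrow P_W \Rightarrow P_1' \Rightarrow P_1$ established in Lemma \ref{lem:P3-equiv} and (\ref{eq:P_1-P_1'}), choose $c^\sharp$ small enough so the universal constants close up, and then use the trivial inequality $W(K) \leq \diam(K)/2$ for centrally symmetric $K$ to pin down $q^\sharp(\mu) \leq n$ by contradiction.

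For part (2), fix $1 \leq q \leq q^\sharp(\mu)$. Since $h_{Z_p(\mu)}(\theta)$ is non-decreasing in $p$ for each $\theta$, so is $p \mapsto \diam(Z_p(\mu))$, whence $P_2(c^\sharp)$ holds at this $q$. Assuming $c^\sharp < \delta_0$, Lemma \ref{lem:P3-equiv}(2b) upgrades $P_2(c^\sharp)$ to $P_W$, i.e.\ $W(Z_q(\mu)) \geq c_W \sqrt{q}\, \detcov(\mu)^{1/(2n)}$ for the universal constant $c_W$ of $P_W$, which is precisely the second claim. Combining $P_2(c^\sharp)$ and $P_W$ via Lemma \ref{lem:P3-equiv}(3) yields $P_1'(C_4 c^\sharp)$, and the right-hand implication in (\ref{eq:P_1-P_1'}) then gives $P_1(C_2 C_4 c^\sharp)$, i.e.\ $k^*(Z_q(\mu)) \geq (C_2 C_4 c^\sharp)^{-2}\, q$. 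Imposing $c^\sharp \leq 1/(C_2 C_4)$ delivers the first claim $k^*(Z_q(\mu)) \geq q$.

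For part (1), suppose for contradiction that $q^\sharp(\mu) > n$. By the monotonicity noted above, $\diam(Z_n(\mu)) \leq c^\sharp \sqrt{n}\, \detcov(\mu)^{1/(2n)}$, so $P_2(c^\sharp)$ holds at $q = n$. The same chain as in part (2) then gives $W(Z_n(\mu)) \geq c_W \sqrt{n}\, \detcov(\mu)^{1/(2n)}$. On the other hand, $Z_n(\mu)$ is centrally symmetric (its support functional is even in $\theta$), and the elementary bound $W(K) \leq \diam(K)/2$ for such $K$ yields $W(Z_n(\mu)) \leq (c^\sharp/2) \sqrt{n}\, \detcov(\mu)^{1/(2n)}$. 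Comparing the two estimates forces $c_W \leq c^\sharp/2$, contradicting the additional requirement $c^\sharp < 2 c_W$.

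The lemma therefore holds for any fixed positive universal constant $c^\sharp < \min\{\delta_0,\, 2 c_W,\, 1/(C_2 C_4)\}$. The only conceptually non-trivial point is recognizing that at $q = n$ the lower bound on $W(Z_n(\mu))$ automatically produced by $P_W$ collides with the diameter hypothesis and thereby caps $q^\sharp(\mu)$ at $n$; the remaining work is mere bookkeeping with the universal constants supplied by Lemma \ref{lem:P3-equiv} and (\ref{eq:P_1-P_1'}).
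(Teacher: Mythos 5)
Your argument for part~(2) is essentially the paper's: chain $P_2(c^\sharp) \Rightarrow P_W$ via Lemma~\ref{lem:P3-equiv}(2b), then $P_2 + P_W \Rightarrow P_1'(C_4 c^\sharp)$ via Lemma~\ref{lem:P3-equiv}(3), then $P_1'\Rightarrow P_1$ via (\ref{eq:P_1-P_1'}), and finally shrink $c^\sharp$ until the constants close. Your argument for part~(1) is a mild variant of the paper's: you route through the mean width $W$, applying $P_W$ at $q=n$ and colliding it against $W(Z_n(\mu))\leq\diam(Z_n(\mu))/2$, whereas the paper argues more directly with $W_n$ by stringing together the AM--GM inequality, monotonicity of $I_q$, the identity (\ref{WQZQ}), and $W_n\leq\diam$. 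Both versions work; the paper's is slightly shorter since it avoids invoking the LMS inequality a second time, but the underlying inequality $n\,\detcov(\mu)^{1/n}\lesssim\diam(Z_n(\mu))^2$ is the same.

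There is, however, one genuine gap. In part~(2) you write that since $q\leq q^\sharp(\mu)$ and $p\mapsto\diam(Z_p(\mu))$ is non-decreasing, ``$P_2(c^\sharp)$ holds at this $q$.'' This fails in the degenerate case where $\diam(Z_1(\mu)) > c^\sharp\sqrt{n}\,\detcov(\mu)^{1/(2n)}$: there $q^\sharp(\mu)=1$ \emph{by convention}, yet $P_2(c^\sharp)$ is false at $q=1$, so the entire chain $P_2\Rightarrow P_W\Rightarrow P_1'\Rightarrow P_1$ never gets started. You still need to verify the two conclusions at $q=1$ in this case, and the chain cannot deliver them. The paper handles this separately: $k^*(Z_1(\mu))\geq 1$ holds essentially by definition, and using (\ref{eq:k*}) one gets $W(Z_1(\mu))\geq c\,\diam(Z_1(\mu))/\sqrt{n}\geq c\,c^\sharp\,\detcov(\mu)^{1/(2n)}$, which is precisely $P_W$ at $q=1$. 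You should add this case to complete the proof; note that part~(1) is unaffected, since $q^\sharp(\mu)=1\leq n$ trivially.
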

\begin{proof}
Assume first that $q^\sharp(\mu) > 1$. The second point follows immediately from Lemma \ref{lem:P3-equiv}
and (\ref{eq:P_1-P_1'}). The first point follows from (\ref{WQZQ}), since:
\[
n \cdot \detcov(\mu)^{\frac{1}{n}} \leq \trcov(\mu) = I_2(\mu)^2 \leq I_n(\mu)^{2} \simeq W_n(Z_n(\mu))^2 \leq \diam(Z_n(\mu))^2 ~.
\]
It remains to deal with the degenerate case $q^\sharp(\mu) = 1$. By definition, $k^*(Z_1(\mu)) \geq 1$, and e.g. by (\ref{eq:k*}):
$$
 W(Z_1(\mu)) \geq c \frac{\diam(Z_1(\mu))}{\sqrt{n}} \geq  c c^\sharp \; \detcov(\mu)^{\frac{1}{2n}} ~,
$$
as required.
\end{proof}

Consequently $\lfloor q^\sharp(\mu) \rfloor \leq q^*(\mu)$, and all
of Paouris' results for $q \leq q^*(\mu)$ continue to hold for $q
\leq q^{\sharp}(\mu)$. Similarly, by Lemma \ref{lem:P3-equiv}, if
$\mu$ is isotropic then $q^*_c(\mu) \leq q^\sharp(\mu)$ for some
small constant $c>0$.
%for some small enough constant $c>0$ (in which case it is easy to check that $q^\sharp(\mu) < \infty$ always). Lemma \ref{lem:P3-equiv} ensures that when $c>0$ is chosen appropriately, all of Paouris' results for
To conclude this section, we reiterate the stability of
$q^\sharp(\mu)$ under projections in the following corollary, which
is one of the key ingredients in the proof of Theorem
\ref{main_thm3}:

\begin{cor} \label{cor:q-sharp-proj}
Let $\mu$ denote an isotropic log-concave probability measure in
$\Real^n$, let $1 \leq k \leq n, q \geq 1$. Then for all $E \in
G_{n,k}$ with $k \geq (c^{\sharp})^{-2} \diam^2(Z_q(\mu))$, we have
$q^\sharp(\pi_E \mu) \geq q$. In particular $k^*(Proj_E Z_q(\mu))
\geq q$ and $W(Proj_E Z_q(\mu)) \geq c \sqrt{q}$.
\end{cor}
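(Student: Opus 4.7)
The plan is to exploit the stability of $q^\sharp$ under projections noted immediately before the definition of $q^\sharp$, combined with the invariance of isotropicity under orthogonal projections. The key identity is $Z_q(\pi_E \mu) = Proj_E Z_q(\mu)$, which follows directly from the supporting functional description: for $\theta \in E$,
\[
h_{Z_q(\pi_E \mu)}(\theta) = \brac{\int_{E} \abs{\scalar{y,\theta}}^q d(\pi_E \mu)(y)}^{1/q} = \brac{\int_{\Real^n} \abs{\scalar{Proj_E x,\theta}}^q d\mu(x)}^{1/q} = h_{Z_q(\mu)}(\theta) = h_{Proj_E Z_q(\mu)}(\theta),
\]
using $\scalar{Proj_E x,\theta} = \scalar{x,\theta}$ for $\theta \in E$.

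Next I would observe that if $\mu$ is isotropic on $\Real^n$ then $\pi_E \mu$ is isotropic on $E$, since for $\theta \in E$ we have $\int \scalar{y,\theta}^2 d(\pi_E\mu)(y) = \int \scalar{x,\theta}^2 d\mu(x) = |\theta|^2$. In particular $\detcov(\pi_E \mu)^{1/(2k)} = 1$.

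With these two facts, the proof reduces to verifying the defining inequality for $q^\sharp(\pi_E \mu) \geq q$: we need $\diam(Z_q(\pi_E \mu)) \leq c^\sharp \sqrt{k} \detcov(\pi_E \mu)^{1/(2k)} = c^\sharp \sqrt{k}$. But $\diam(Z_q(\pi_E \mu)) = \diam(Proj_E Z_q(\mu)) \leq \diam(Z_q(\mu))$, so the hypothesis $k \geq (c^\sharp)^{-2} \diam^2(Z_q(\mu))$ gives exactly
\[
\diam(Z_q(\pi_E \mu)) \leq \diam(Z_q(\mu)) \leq c^\sharp \sqrt{k},
\]
proving $q^\sharp(\pi_E \mu) \geq q$. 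Finally I would invoke Lemma \ref{lem:q-sharp} applied to $\pi_E \mu$ (which is log-concave, isotropic, on $E \cong \Real^k$) to obtain $k^*(Z_q(\pi_E \mu)) \geq q$ and $W(Z_q(\pi_E \mu)) \geq c \sqrt{q} \detcov(\pi_E \mu)^{1/(2k)} = c \sqrt{q}$, and then rewrite $Z_q(\pi_E\mu) = Proj_E Z_q(\mu)$. No step is really an obstacle here; the only subtlety is remembering that both normalizing quantities ($\sqrt{n}$ and $\detcov^{1/(2n)}$) in the definition of $q^\sharp$ change when passing to the projection, and that the isotropicity of $\mu$ is precisely what makes these changes cancel against each other so that the hypothesis $k \geq (c^\sharp)^{-2}\diam^2(Z_q(\mu))$ is the right one.
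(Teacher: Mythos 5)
Your proof is correct and follows exactly the same line as the paper's own one-line proof: use $Z_q(\pi_E \mu) = Proj_E Z_q(\mu)$, the isotropicity of $\pi_E \mu$, and $\diam(Proj_E Z_q(\mu)) \leq \diam(Z_q(\mu)) \leq c^\sharp \sqrt{k}$, then apply the definition of $q^\sharp$ and Lemma \ref{lem:q-sharp}. You simply spell out the details (the supporting-functional computation and the cancellation of the normalizing factors) that the paper leaves implicit.
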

\begin{proof}
Since $\pi_E \mu$ remains isotropic, $Z_q(\pi_E \mu) = Proj_E
Z_q(\mu)$ and $\diam(Proj_E Z_q(\mu)) \leq \diam(Z_q(\mu)) \leq
c^\sharp \sqrt{k}$, the assertion follows by definition of
$q^\sharp(\pi_E \mu)$ and Lemma \ref{lem:q-sharp}.
\end{proof}

\section{Controlling $\detcov(\mu_x)$ via projections} \label{sec5}

In view of Proposition \ref{prop:vr-formula}, our goal now is to bound from below $\detcov(\mu_x)^{\frac{1}{2n}}$
for the tilted measures $\mu_x$, where $x \in \frac{1}{2}
\Lambda_p(\mu)$. Our only available information is given by Proposition \ref{prop:Zp-iso}, stating that $Z_p(\mu_x) \simeq Z_p(\mu)$, where
$\mu$ itself is assumed isotropic.

\subsection{Finding a single good direction}

Suppose $\nu$ is a log-concave probability measure on $\Real^n$ whose
barycenter lies at the origin. Recall that its isotropic constant is defined as:
\begin{equation} \label{eq:L_nu}
L_\nu := \norm{\nu}_{L_\infty}^{\frac{1}{n}}
\detcov(\nu)^{\frac{1}{2n}} ~.
\end{equation}
Since the isotropic constant $L_\nu$ satisfies $L_\nu \geq c > 0$
(see e.g. \cite{MilmanPajor,K_psi2}), then according to Lemma
\ref{lem:Zn}:
\begin{equation} \label{eq:basic-est}
\detcov(\nu)^{\frac{1}{2n}} \gtrsim \frac{1}{\norm{\nu}_{L_\infty}^{\frac{1}{n}}} \simeq \frac{\vr(Z_n(\nu))}{\sqrt{n}}  ~.
\end{equation}

% clearly $\vr(Z_2(\nu)) =
%\detcov(\nu)^{\frac{1}{2n}}$ as already mentioned in the Introduction, and so our task is elementary for
%$p=2$, and by (\ref{eq:Zp-inclusion}) for other small values of $p$. On
%the other extreme lies $Z_n(\nu)$, and we know by Lemma \ref{lem:Zn} that:
%\begin{equation}  \label{eq:Z_n}
%\vol(Z_n(\nu))^\frac{1}{n} \simeq
%\frac{1}{\norm{\nu}_{L^\infty}^{\frac{1}{n}}} ~.
%\end{equation}
% where recall:
%
%it follows that:
%
\begin{rem} \label{rem:LYZ-2}
Since $Z_n(\mu_x) \simeq Z_n(\mu)$ whenever $x \in
\frac{1}{2} \Lambda_n(\mu)$, we immediately see by (\ref{eq:basic-est})
and (\ref{eq:L_nu}) that in this case:
\[
\detcov(\mu_x)^{\frac{1}{2n}} \gtrsim \frac{\vr(Z_n(\mu_x))}{\sqrt{n}}
\simeq \frac{\vr(Z_n(\mu))}{\sqrt{n}} \simeq \frac{1}{\norm{\mu}_{L_\infty}^{\frac{1}{n}}} \simeq \frac{\detcov(\mu)^{\frac{1}{2n}}}{L_\mu} ~,
\]
as already noted in \cite[Formula (50)]{K_psi2}. Using the lower bound on $\vr(Z_p(\mu))$ given by Lemma \ref{lem_low},
it follows that:
\[
\vr(Z_p(\mu)) \gtrsim \frac{\sqrt{p}}{L_\mu} \vr(Z_2(\mu)) \quad , \quad \forall 1 \leq p \leq n ~,
\]
recovering the extended Lutwak--Yang--Zhang lower-bound from Remark \ref{rem:LYZ-1}.
This however misses our goal in this section by a factor of $L_\mu$.
\end{rem}

%It will be useful to find at least one direction whose variance we can control by knowing something about $Z_k(\nu)$.
%We next generalize the basic estimate (\ref{eq:basic-est}) to handle other values of $q \geq 1$ (possibly even larger than $n$) by projecting onto a lower dimensional subspace:
We next generalize the basic estimate (\ref{eq:basic-est}) to handle
other (say integer) values of $k$ between $1$ and $n$, by projecting
onto a lower dimensional subspace:
%To handle other values of $p \geq 1$ (possibly even larger than $n$), we simply project down to a lower dimension as follows:
%, employing there our basic estimate (\ref{eq:basic-est}). We obtain:

\begin{lem} \label{lem:basic-est2}
Let $\nu$ denote a log-concave probability measure in $\Real^n$ with
barycenter at the origin, and let $k$ denote an integer between $1$
and $n$. Then:
\begin{equation} \label{eq:basic-est2}
%\exists \theta \in S^{n-1} \;\;\; Var(\pi_\theta \nu)^{1/2} \geq \frac{c}{\sqrt{k} \max(q/k,1)} \sup_{E \in G_{n,k}} \vr(Proj_E Z_q(\nu)) ~.
\exists \theta \in S^{n-1} \;\;\; \sqrt{ \int_{\RR^n} \scalar{x, \theta}^2 d \nu(x) } \geq
\frac{c}{\sqrt{k}} \sup_{E \in G_{n,k}} \vr(Proj_E Z_k(\nu)) ~.
\end{equation}
\end{lem}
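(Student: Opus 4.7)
The plan is to reduce the claim to the basic one-dimensional estimate (\ref{eq:basic-est}), applied in a lower dimensional setting via the projection $\pi_E \nu$. First I would observe that the left-hand side of (\ref{eq:basic-est2}), maximized over $\theta \in S^{n-1}$, is exactly the circumradius of the centrally symmetric body $Z_2(\nu)$, so it suffices to prove the stronger pointwise statement: for each fixed $E \in G_{n,k}$,
\[
\max_{\theta \in S^{n-1}} \sqrt{\int_{\RR^n} \scalar{x,\theta}^2 d\nu(x)} \;\gtrsim\; \frac{1}{\sqrt{k}} \vr(Proj_E Z_k(\nu)) ~,
\]
and then take the supremum over $E$.

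Second, I would restrict to directions $\theta \in E \cap S^{n-1}$ and pass to the projected measure $\pi_E \nu$, which is a log-concave probability measure on $E$ with barycenter at the origin. The standard identity $Z_k(\pi_E \nu) = Proj_E Z_k(\nu)$ (immediate from the definition of the supporting functional) converts the right-hand side into a quantity intrinsic to $\pi_E \nu$. Applying the basic estimate (\ref{eq:basic-est}) to $\pi_E \nu$ inside the $k$-dimensional Euclidean space $E$ yields
\[
\detcov(\pi_E \nu)^{\frac{1}{2k}} \;\gtrsim\; \frac{\vr(Z_k(\pi_E \nu))}{\sqrt{k}} \;=\; \frac{\vr(Proj_E Z_k(\nu))}{\sqrt{k}} ~.
\]

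Third, I would pass from determinant to trace via the arithmetic-geometric mean inequality $\trcov(\pi_E \nu) \geq k \detcov(\pi_E \nu)^{1/k}$, giving $\trcov(\pi_E \nu) \gtrsim \vr(Proj_E Z_k(\nu))^2$. Since for any orthonormal basis $\{e_i\}_{i=1}^k$ of $E$ one has $\trcov(\pi_E \nu) = \sum_{i=1}^k \int \scalar{x,e_i}^2 d\nu(x)$, pigeonholing produces some index $i$, and hence some $\theta = e_i \in E \cap S^{n-1}$, with $\int \scalar{x,\theta}^2 d\nu(x) \geq \trcov(\pi_E \nu)/k \gtrsim \vr(Proj_E Z_k(\nu))^2 / k$, as desired.

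I do not expect a genuine obstacle here: every ingredient, namely (\ref{eq:basic-est}), the commutation of $Z_k$ with projections, the AM-GM inequality, and the trivial pigeonhole among coordinate directions, is either already established in the paper or entirely elementary. The only point requiring minor care is to verify that the universal constants from (\ref{eq:basic-est}) and the AM-GM step combine to produce a single constant $c > 0$ independent of both $k$ and $n$, which they do because each step contributes a dimension-free multiplicative factor.
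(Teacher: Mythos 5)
Your proof is correct and follows the same route as the paper: project to $E$, invoke (\ref{eq:basic-est}) for $\pi_E\nu$ in $\Real^k$, and use $Z_k(\pi_E\nu)=Proj_E Z_k(\nu)$. The paper leaves the final extraction of a direction implicit (geometric mean of eigenvalues $\leq$ largest eigenvalue), whereas you spell it out via AM--GM on the trace and pigeonhole; this is a cosmetic difference, not a different argument.
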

\begin{proof}
Given $E \in G_{n,k}$, apply (\ref{eq:basic-est}) to $\pi_E \nu$ and
note that $Z_k(\pi_E \nu) = Proj_E Z_k(\nu)$.
\end{proof}

The idea now is to compare $\vr(Proj_E Z_k(\mu_x))$ with $\vr(Proj_E
Z_k(\mu))$. Note that if $Z_p(\nu) \simeq Z_p(\mu)$,
then by (\ref{eq:Zp-inclusion}):
\[
1 \leq q \leq p \;\;\; \Rightarrow \;\;\; c \frac{q}{p} Z_q(\mu)
\subset Z_q(\nu) \subset C \frac{p}{q} Z_q(\mu) ~,
\]
and so $\vr(Proj_E Z_k(\nu)) \geq c \frac{k}{p} \vr(Proj_E
Z_k(\mu))$ for all $E \in G_{n,k}$, whenever $k \leq p$. To control $\vr(Proj_E
Z_k(\mu))$, we have:
%is easy as long as $k \leq q^\sharp(\mu)$, as demonstrated in:

\begin{lem} \label{lem:controling-vr}
Let $\mu$ denote a log-concave probability measure in $\Real^n$ with
barycenter at the origin, and let $1 \leq k \leq q^\sharp(\mu)$.
Then:
\[
\exists E \in G_{n,k} \;\;\; \vr(Proj_E Z_k(\mu)) \geq c \sqrt{k}
\detcov(\mu)^{\frac{1}{2n}} ~.
\]
\end{lem}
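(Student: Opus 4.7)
The plan is to combine the two consequences of $k \leq q^\sharp(\mu)$ recorded in Lemma \ref{lem:q-sharp}(2): (i) a lower bound on the Dvoretzky dimension $k^*(Z_k(\mu))$, and (ii) a lower bound on the mean width $W(Z_k(\mu))$ in terms of $\detcov(\mu)^{1/(2n)}$. Once we have a subspace $E$ on which $Proj_E Z_k(\mu)$ is comparable to a Euclidean ball of radius $\simeq W(Z_k(\mu))$, the desired volume-radius lower bound will follow immediately.

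More precisely, since $1 \leq k \leq q^\sharp(\mu)$, applying Lemma \ref{lem:q-sharp}(2) with $q = k$ yields $k^*(Z_k(\mu)) \geq k$ together with $W(Z_k(\mu)) \geq c\sqrt{k}\,\detcov(\mu)^{1/(2n)}$. Note that $Z_k(\mu)$ is centrally symmetric (its support function is the $L_k$-norm $h_{Z_k(\mu)}$), so the definition of $k^*$ from Section \ref{sec4} applies. By that definition, the inequality $k^*(Z_k(\mu)) \geq k$ means there is a set of $E \in G_{n,k}$ of $\sigma_{n,k}$-measure at least $n/(n+k)$ for which
\[
\tfrac{1}{2} W(Z_k(\mu))\, B_E \;\subset\; Proj_E Z_k(\mu) \;\subset\; 2 W(Z_k(\mu))\, B_E.
\]
In particular such $E$ exists. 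For any such $E$, since $\vr(B_E) = 1$ and volume radius is monotone with respect to inclusion, the left inclusion gives
\[
\vr(Proj_E Z_k(\mu)) \;\geq\; \tfrac{1}{2} W(Z_k(\mu)) \;\geq\; \tfrac{c}{2} \sqrt{k}\,\detcov(\mu)^{1/(2n)},
\]
which is the claim.

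I do not anticipate any real obstacle here: the work was done in Lemma \ref{lem:q-sharp}, whose proof packaged the two key pieces (Dvoretzky dimension bound and mean-width bound) into a single statement valid throughout the range $1 \leq q \leq q^\sharp(\mu)$. The only mild subtlety worth highlighting is that we need central symmetry of $Z_k(\mu)$ to invoke the definition of $k^*$, but this is built into the construction of the $L_p$-centroid bodies via the absolute value in $h_{Z_p(\mu)}$.
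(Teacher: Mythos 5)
Your proof is correct and follows essentially the same route as the paper: invoke Lemma \ref{lem:q-sharp}(2) to obtain both $k^*(Z_k(\mu)) \geq k$ and the mean-width lower bound, use the definition of $k^*$ to find a subspace $E$ on which the projection is comparable to a Euclidean ball of radius $\simeq W(Z_k(\mu))$, and conclude via monotonicity of the volume-radius. The only addition is your remark about central symmetry of $Z_k(\mu)$, which the paper leaves implicit but is indeed the reason the definition of $k^*$ applies.
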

\begin{proof}
Lemma \ref{lem:q-sharp} asserts that $1 \leq k \leq q^\sharp(\mu)$
implies that $k^*(Z_k(\mu)) \geq k$. Consequently, there exists at
least one (in fact, many) $E \in G_{n,k}$ so that:
\[
\frac{1}{2} W(Z_k(\mu)) B_E \subset Proj_E Z_k(\mu) \subset 2
W(Z_k(\mu)) B_E ~,
\]
and hence $\vr(Proj_E Z_k(\mu)) \geq \frac{1}{2} W(Z_k(\mu))$. It
remains to appeal to Lemma \ref{lem:q-sharp} again and deduce from
$1 \leq k \leq q^\sharp(\mu)$ that $W(Z_k(\mu)) \geq c \sqrt{k}
\detcov(\mu)^{\frac{1}{2n}}$.
\end{proof}

Combining all of the preceding discussion, we obtain the
following fundamental:
%are now ready to transfer information on the covariance structure from $\mu$ to $\nu = \mu_x$.

\begin{prop} \label{prop:det-basic-est}
Let $\nu,\mu$ denote two log-concave probability measures in
$\Real^n$ with barycenters at the origin, and let $1 \leq p \leq n$.
% and $A \geq 1$. Assume that $$ \frac{1}{A} Z_p(\mu) \subseteq Z_p(\nu) \subseteq \simeq Z_p(\mu). $$
Assume that $Z_p(\nu) \simeq Z_p(\mu)$. Then:
\[
\exists \theta \in S^{n-1} \;\;\; \sqrt{ \int_{\RR^n} \scalar{x,
\theta}^2 d \nu(x) } \geq %c(A) \dot
c \min \left \{
1,\frac{q^\sharp(\mu)}{p} \right\} \detcov(\mu)^{\frac{1}{2n}} ~.
\]
%where $c(A) > 0$ is a constant depending solely on $A$.
\end{prop}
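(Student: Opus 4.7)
The plan is to combine the three preparatory results of this section: Lemma \ref{lem:basic-est2}, which reduces a lower bound on $\sqrt{\int\scalar{x,\theta}^2 d\nu}$ to a volume-radius bound on a $k$-dimensional projection of $Z_k(\nu)$; the Berwald-type inclusion \eqref{eq:Zp-inclusion}, which converts the hypothesis $Z_p(\nu) \simeq Z_p(\mu)$ into a comparison at level $k \leq p$; and Lemma \ref{lem:controling-vr}, which produces a good subspace $E$ as long as $k \leq q^\sharp(\mu)$. The correct choice of dimension is $k := \lfloor\min\{p,q^\sharp(\mu)\}\rfloor$, which is a positive integer in $[1,n]$ since $q^\sharp(\mu)\geq 1$ by convention and $p\leq n$.

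The first step is to compare $Z_k(\nu)$ with $Z_k(\mu)$. Since $k\leq p$, applying \eqref{eq:Zp-inclusion} to $\nu$ yields $Z_p(\nu) \subset C\tfrac{p}{k}Z_k(\nu)$, i.e.\ $Z_k(\nu) \supset \tfrac{k}{Cp}Z_p(\nu)$. Chaining with the hypothesis $Z_p(\nu) \supset c\,Z_p(\mu)$ and the trivial inclusion $Z_p(\mu) \supset Z_k(\mu)$, one obtains
\[
Z_k(\nu) \supset c'\tfrac{k}{p}\,Z_k(\mu).
\]
Projecting onto any $E \in G_{n,k}$ preserves this, so $\vr(Proj_E Z_k(\nu)) \geq c'\tfrac{k}{p}\vr(Proj_E Z_k(\mu))$.

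The second step is to exhibit an $E$ for which $\vr(Proj_E Z_k(\mu))$ is large. Since $k\leq q^\sharp(\mu)$, Lemma \ref{lem:controling-vr} supplies $E\in G_{n,k}$ with $\vr(Proj_E Z_k(\mu)) \geq c\sqrt{k}\,\detcov(\mu)^{\frac{1}{2n}}$. Feeding this and the previous estimate into Lemma \ref{lem:basic-est2} produces a direction $\theta\in S^{n-1}$ satisfying
\[
\sqrt{\int_{\RR^n}\scalar{x,\theta}^2 d\nu(x)} \;\geq\; \frac{c}{\sqrt{k}}\cdot c'\tfrac{k}{p}\cdot c\sqrt{k}\,\detcov(\mu)^{\frac{1}{2n}} \;=\; c''\tfrac{k}{p}\,\detcov(\mu)^{\frac{1}{2n}}.
\]
Finally, since $\min\{p,q^\sharp(\mu)\}\geq 1$ one has $k \geq \tfrac{1}{2}\min\{p,q^\sharp(\mu)\}$, so $k/p \geq \tfrac{1}{2}\min\{1,q^\sharp(\mu)/p\}$, which yields the claim.

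There is no real obstacle here beyond correctly orchestrating the chain of inclusions; the only mild subtlety is the integrality requirement in Lemma \ref{lem:basic-est2} (hence the floor in the choice of $k$) and the need to verify that the floor loses at most a constant factor, which is immediate in the relevant range $\min\{p,q^\sharp(\mu)\}\geq 1$.
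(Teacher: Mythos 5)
Your proof is correct and follows exactly the intended route: the paper gives no separate proof of Proposition~\ref{prop:det-basic-est} beyond stating that it results from "combining all of the preceding discussion," and your combination of Lemma~\ref{lem:basic-est2}, the inclusion derived from \eqref{eq:Zp-inclusion}, and Lemma~\ref{lem:controling-vr} at the level $k=\lfloor\min\{p,q^\sharp(\mu)\}\rfloor$ is precisely that combination, with the floor issue handled correctly.
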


\begin{rem}
To avoid ambiguity of our notation, we explicitly remark that throughout this section, all statements which \emph{assume} that $Z_p(\nu) \simeq Z_p(\mu)$, in
fact apply whenever $\frac{1}{B} Z_p(\mu) \subseteq Z_p(\nu) \subseteq B Z_p(\mu)$ for \emph{any} parameter $B \geq 1$, with the resulting constants in the conclusion of those statements depending in addition on $B$.
\end{rem}

\subsection{Controlling the entire $\detcov(\nu)$}

We can now proceed to control the entire $\detcov(\nu)$ by
projecting onto the flag of subspaces spanned by the eigenvectors of
$\cov(\nu)$. To apply Proposition \ref{prop:det-basic-est}, we
require good control over $q^\sharp(\pi_E \mu)$. One way to obtain
such control is to make a definition:

\begin{dfn*}
The Hereditary-$q^\sharp$ constant of a log-concave probability
measure $\mu$ on $\Real^n$, denoted $q^\sharp_H(\mu)$, is defined
as:
\[
q^\sharp_H(\mu) := n \; \inf_k \inf_{E \in G_{n,k}}
\frac{q^\sharp(\pi_E \mu)}{k}
~.
\]
\end{dfn*}

\begin{rem} \label{rem:qH-iso}
It is useful to note the following alternative formula for $q^\sharp_H(\mu)$, valid only for
an {\it isotropic}, log-concave probability measure $\mu$ on
$\RR^n$. Recalling the definitions of $q^\sharp(\nu)$, $\Delta_\nu(q) = \diam(Z_q(\nu))$,
and using $\sup_{E \in G_{n,k}} \diam(Proj_E Z_q(\mu)) =
\diam(Z_q(\mu))$, we obtain:
\begin{equation} \label{eq:q_H-extended}
q^\sharp_H(\mu) =  n \; \inf_{1 \leq k \leq n}
\frac{\Delta_\mu^{-1}(c^\sharp \sqrt{k})}{k} \simeq
 n \; \inf_{1 \leq q \leq q^\sharp(\mu)} \frac{q}{\diam(Z_q(\mu))^2} ~,
\end{equation}
where we use (\ref{eq:Zp-inclusion}) and our convention for when
$q^\sharp(\nu) = 1$ to justify the last equivalence.
\end{rem}

\begin{prop} \label{prop:Her}
Let $\nu,\mu$ denote two log-concave probability measures in
$\Real^n$ with barycenters at the origin, and assume that $\mu$ is
isotropic. Let $1 \leq p \leq A q^\sharp_H(\mu)$ with $A \geq 1$, and assume that $Z_p(\nu) \simeq Z_p(\mu)$.
Then:
\[
\detcov(\nu)^{\frac{1}{2n}} \geq \frac{c}{A} ~,
\]
where $c > 0$ denotes a universal constant.
%where $c(A) > 0$ is a constant depending solely on $A$.
\end{prop}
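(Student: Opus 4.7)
\noindent\textbf{Proof plan for Proposition \ref{prop:Her}.}

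The plan is to bound from below each eigenvalue of $\cov(\nu)$ by projecting onto carefully chosen subspaces, and then multiply these bounds together. First I would diagonalize $\cov(\nu)$, writing the eigenvalues in \emph{increasing} order $\lambda_1 \leq \lambda_2 \leq \ldots \leq \lambda_n$ with corresponding orthonormal eigenvectors $u_1,\ldots,u_n$, and set $E_k := \textrm{span}(u_1,\ldots,u_k) \in G_{n,k}$. Then $\detcov(\nu) = \prod_{k=1}^n \lambda_k$, so it suffices to prove $\lambda_k \gtrsim (k/(An))^2$ for each $k$.

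Fix $k \in \{1,\ldots,n\}$. Since $\mu$ is isotropic on $\RR^n$, its marginal $\pi_{E_k}\mu$ is isotropic on $E_k$, in particular $\detcov(\pi_{E_k}\mu) = 1$. Since $Z_p(\nu) \simeq Z_p(\mu)$ and projections commute with the $Z_p$-operation, $Z_p(\pi_{E_k} \nu) = Proj_{E_k} Z_p(\nu) \simeq Proj_{E_k} Z_p(\mu) = Z_p(\pi_{E_k}\mu)$. Applying Proposition \ref{prop:det-basic-est} inside $E_k$ to the pair $(\pi_{E_k}\nu, \pi_{E_k}\mu)$, I obtain a direction $\theta \in S^{n-1} \cap E_k$ with
\[
\sqrt{\int_{\RR^n} \scalar{x,\theta}^2 d\nu(x)} = \sqrt{\int_{E_k} \scalar{y,\theta}^2 d\pi_{E_k}\nu(y)} \geq c \min\set{1, \frac{q^\sharp(\pi_{E_k}\mu)}{p}} ~.
\]
By the very definition of $q^\sharp_H$, we have $q^\sharp(\pi_{E_k}\mu) \geq k \, q^\sharp_H(\mu)/n \geq kp/(An)$, so the right-hand side is at least $c \min\{1, k/(An)\} = c k/(An)$ (using $A \geq 1$ and $k \leq n$, so that $k/(An) \leq 1$). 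On the other hand, since $\theta \in E_k$, the left-hand side squared equals $\theta^T \cov(\nu) \theta$, which is bounded above by $\lambda_k$, the largest eigenvalue of $\cov(\nu)$ restricted to $E_k$. Therefore
\[
\lambda_k \geq \frac{c^2 k^2}{A^2 n^2} ~.
\]

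Taking the product over $k=1,\ldots,n$ and using Stirling's inequality $(n!)^2 \geq (n/e)^{2n}$:
\[
\detcov(\nu) = \prod_{k=1}^n \lambda_k \geq \frac{c^{2n} (n!)^2}{A^{2n} n^{2n}} \geq \brac{\frac{c}{Ae}}^{2n} ~,
\]
so that $\detcov(\nu)^{1/(2n)} \geq c/(Ae)$, which is the desired conclusion up to a universal constant. The only mildly delicate step is the clean application of Proposition \ref{prop:det-basic-est} inside the subspace $E_k$, which relies on the stability $Z_p(\pi_{E_k}\nu) \simeq Z_p(\pi_{E_k}\mu)$ and on the hereditary definition of $q^\sharp_H(\mu)$ supplying the lower bound on $q^\sharp(\pi_{E_k}\mu)$; everything else is bookkeeping with Stirling's formula.
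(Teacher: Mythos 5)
Your proof is correct and matches the paper's argument essentially line for line: same flag of subspaces $E_k$ spanned by the eigenvectors of the $k$ smallest eigenvalues of $\cov(\nu)$, same application of Proposition \ref{prop:det-basic-est} to the projected pair $(\pi_{E_k}\nu,\pi_{E_k}\mu)$, same invocation of the hereditary definition of $q^\sharp_H$ to get $q^\sharp(\pi_{E_k}\mu)\geq kp/(An)$, and same geometric average at the end. The only difference is that you spell out the Stirling bound $\prod_k (k/n)\geq e^{-n}$ explicitly, which the paper leaves implicit in ``taking geometric average.''
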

\begin{proof} %In this proof, $c, C > 0$ denote constants depending only on $A$.
Let $0 < \lambda_1 \leq \lambda_2 \leq \ldots \leq \lambda_n$
denote the eigenvalues of $\cov(\nu)$, and let $E_k \in G_{n,k}$
denote the subspace spanned by the eigenvectors corresponding to
$\lambda_1,\ldots,\lambda_k$. Since $Proj_{E_k} Z_p(\nu) \simeq Proj_{E_k} Z_p(\mu)$, Proposition
\ref{prop:det-basic-est} applied to $\pi_{E_k} \nu$ and $\pi_{E_k}
\mu$ implies that:
\[
\sqrt{\lambda_k} \geq c \min\brac{1,\frac{q^\sharp(\pi_{E_k}
\mu)}{p}} \geq c \min\brac{1,\frac{q^\sharp_H(\mu)}{p} \frac{k}{n}}
\geq \frac{c}{A} \frac{k}{n} ~.
\]
Taking geometric average over the $\lambda_k$'s, the assertion
immediately follows.
\end{proof}

\begin{rem} \label{rem:geom-avg}
It is clear from the proof that we may actually replace in the definition of $q_H^\sharp(\mu)$ the infimum over $k$
with a geometric-average over the terms. For future reference, we denote this variant by $q^\sharp_{GH}(\mu)$, and as in Remark \ref{rem:qH-iso}, obtain the following expression for it when $\mu$ is in addition \emph{isotropic}:
\begin{equation} \label{eq:qGH}
q^\sharp_{GH}(\mu) = n \; \brac{\prod_{k=1}^n
\frac{\Delta_\mu^{-1}(c^\sharp \sqrt{k})}{k}}^{\frac{1}{n}} \simeq
\brac{\prod_{k=1}^n \Delta_\mu^{-1}(c^\sharp
\sqrt{k})}^{\frac{1}{n}} ~.
\end{equation}
\end{rem}

Another way to obtain some (partial) control over $q^\sharp(\pi_E
\mu)$ is to invoke Corollary \ref{cor:q-sharp-proj}:

\begin{prop} \label{prop:log}
Let $\nu,\mu$ denote two log-concave probability measures in
$\Real^n$ with barycenters at the origin, and assume that $\mu$ is
isotropic. Let $1 \leq p \leq n$ and $A \geq 1$. Assume that $Z_p(\nu) \simeq Z_p(\mu)$ and
that:
%Let $A \geq 1, 1 \leq p \leq n$, and assume that $$ \frac{1}{A} Z_p(\mu) \subseteq Z_p(\nu) \subseteq A Z_p(\mu). $$ We furthermore assume that:
\begin{equation} \label{eq:diam-log-assump}
\diam(Z_p(\mu)) \sqrt{\log(p)} \leq A \sqrt{n} ~.
\end{equation}
Then:
\[
\detcov(\nu)^{\frac{1}{2n}} \geq \exp(-C A^2) ~.
\]
\end{prop}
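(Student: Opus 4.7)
The plan is to mimic the flag-of-eigenspaces scheme of Proposition \ref{prop:Her}, replacing the hereditary estimate on $q^\sharp_H(\mu)$ by the one-shot projection estimate from Corollary \ref{cor:q-sharp-proj}. Let $0 < \lambda_1 \leq \cdots \leq \lambda_n$ denote the eigenvalues of $\cov(\nu)$, and let $E_k \in G_{n,k}$ be the span of the eigenvectors attached to $\lambda_1,\dots,\lambda_k$. Since $\mu$ is isotropic, the marginal $\pi_{E_k}\mu$ is again isotropic, and because $Proj_{E_k}$ commutes with the $Z_p$ operation, the hypothesis $Z_p(\nu)\simeq Z_p(\mu)$ descends to $Z_p(\pi_{E_k}\nu)\simeq Z_p(\pi_{E_k}\mu)$. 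Applying Proposition \ref{prop:det-basic-est} inside $E_k$ (with $\detcov(\pi_{E_k}\mu)^{1/(2k)} = 1$) yields, exactly as in Proposition \ref{prop:Her},
\[
\sqrt{\lambda_k} \;\geq\; c\,\min\!\left(1,\;\frac{q^\sharp(\pi_{E_k}\mu)}{p}\right) \quad \text{for every } 1 \le k \le n.
\]

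The substantive step is to split the indices according to whether Corollary \ref{cor:q-sharp-proj} supplies nontrivial information. Set $k_\star := \bigl\lceil (c^\sharp)^{-2}\,\diam(Z_p(\mu))^2\bigr\rceil$, so that the corollary forces $q^\sharp(\pi_{E_k}\mu)\ge p$ (and hence $\sqrt{\lambda_k}\ge c$) whenever $k \ge k_\star$. For the remaining small indices $k<k_\star$ I fall back on the trivial estimate $q^\sharp(\pi_{E_k}\mu)\ge 1$, which only gives $\sqrt{\lambda_k}\ge c/p$. By hypothesis (\ref{eq:diam-log-assump}), $k_\star \lesssim A^2 n/\log p$, so at most this many of the $\lambda_k$ are permitted to be small.

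The finish is pure geometric-mean bookkeeping:
\[
\log\detcov(\nu)^{1/(2n)} \;=\; \frac{1}{n}\sum_{k=1}^n \log\sqrt{\lambda_k} \;\geq\; -\frac{k_\star\,\log(p/c)}{n} \;-\; C \;\geq\; -C'A^2,
\]
where the last inequality uses $k_\star\log p \lesssim A^2 n$ and $A\ge 1$; exponentiating yields the claim. The spot I expect to require most care is precisely this balance: the saving of $\log p$ per large-$k$ index and the cost of $\log p$ per small-$k$ index cancel exactly at the threshold $k_\star \sim A^2 n / \log p$, which is what forces the $\sqrt{\log p}$ factor in (\ref{eq:diam-log-assump}) to be essential rather than cosmetic (as foreshadowed in the remarks preceding the proposition). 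Degenerate small $p$ is a non-issue: for $p \le 2$ we have $\diam(Z_p(\mu)) \le \diam(Z_2(\mu)) = 2$ since $\mu$ is isotropic, and the conclusion becomes trivial.
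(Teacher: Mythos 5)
Your proof is correct and follows the same strategy as the paper: project onto the flag of eigenspaces of $\cov(\nu)$, and split at the threshold $k_\star \simeq (c^\sharp)^{-2}\diam^2(Z_p(\mu))$, above which Corollary \ref{cor:q-sharp-proj} gives $q^\sharp(\pi_{E_k}\mu)\ge p$. The only (cosmetic) difference is in the treatment of the low-dimensional block $E_{k_\star}$: you reuse Proposition \ref{prop:det-basic-est} eigenvalue-by-eigenvalue via the trivial bound $q^\sharp(\pi_{E_k}\mu)\ge 1$, which gives $\sqrt{\lambda_k}\ge c/p$ for each $k<k_\star$, whereas the paper bounds the block determinant $\detcov(\pi_{E_{k_\star}}\nu)^{1/(2k_\star)} = \vr(Z_2(\pi_{E_{k_\star}}\nu))$ in one stroke using $(\ref{eq:Zp-inclusion})$ and $Z_p(\pi_{E_{k_\star}}\nu)\simeq Z_p(\pi_{E_{k_\star}}\mu)$, arriving at the same $c/p$ per unit of dimension. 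Your version is the more literal continuation of the template from Proposition \ref{prop:Her}, while the paper's is marginally leaner since it invokes Proposition \ref{prop:det-basic-est} only once; the bookkeeping $k_\star\log p\lesssim A^2 n$ is identical in both. Your closing remark on small $p$ is also consistent with the paper's implicit handling of that regime.
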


\begin{proof}
We employ the same notation as in the previous proof. Setting:
\[
k_0 := \lceil (c^{\sharp})^{-2} \diam^2(Z_p(\mu)) \rceil ~,
\]
Corollary \ref{cor:q-sharp-proj} states  that
$q^\sharp(\pi_{E_{k_0}} \mu) \geq p$. Consequently, applying
Proposition \ref{prop:det-basic-est} to $\pi_{E_{k_0}} \nu$ and
$\pi_{E_{k_0}} \mu$, we obtain that $\lambda_{k_0} \geq c > 0$, and
hence the largest $n-k_0+1$ eigenvalues of $\cov(\nu)$ are bounded
below by the same $c > 0$. To bound the contribution of the other
eigenvalues, we use (\ref{eq:Zp-inclusion}) to obtain the following trivial bound (which may be
improved, but ultimately only results in better numeric constants):
\begin{eqnarray*}
\detcov(\pi_{E_{k_0}} \nu)^{\frac{1}{2k_0}} &=& \vr(Z_2(\pi_{E_{k_0}} \nu)) \gtrsim \frac{1}{p} \vr(Z_p(\pi_{E_{k_0}} \nu)) \\
&\simeq& \frac{1}{p} \vr(Z_p(\pi_{E_{k_0}} \mu)) \geq \frac{1}{p}
\vr(Z_2(\pi_{E_{k_0}} \mu)) = \frac{1}{p} ~.
\end{eqnarray*}
Using our estimates separately on $E_{k_0}$ and $E_{k_0}^\perp$, we
obtain:
\[
\detcov(\nu)^{\frac{1}{2n}} =  \left(\detcov(\pi_{E_{k_0}} \nu)
\detcov(\pi_{E_{k_0}^\perp} \nu) \right)^{\frac{1}{2n}} \geq c
\brac{\frac{1}{p}}^{\frac{k_0}{n}} ~.
\]
Our assumption (\ref{eq:diam-log-assump}) precisely ensures that
$k_0 \log(p) \leq C \cdot A^2 n$, and the assertion follows.
\end{proof}

\begin{rem}
Our choice of working in this section with $q^{\sharp}(\mu)$ instead
of $q^*_c(\mu)$ is only a matter of convenience and is not of
essence, as justified in Section \ref{sec4}.
%Furthermore, it is clear from the proof of Proposition \ref{prop:Her} that we may
%replace in the definition of $q_H^\sharp(\mu)$ the infimum over $k$
%with a geometric-average over the terms.
\end{rem}

\subsection{Proofs of Main Theorems} \label{subsec:generalizations}

Theorem \ref{main_thm3} now follows immediately from
Proposition \ref{prop:log}, combined with
Propositions
\ref{prop:vr-formula} and \ref{prop:Zp-iso}.
Similarly, Proposition \ref{prop:Her} and Remark \ref{rem:geom-avg}, combined with
Propositions \ref{prop:vr-formula} and \ref{prop:Zp-iso}, yield:

\begin{thm} \label{main_thm2+}
Let $\mu$ denote an isotropic log-concave probability measure in
$\Real^n$. Then:
\[
\vr(Z_p(\mu)) \geq c \sqrt{p} \quad \quad , \quad \quad \forall \; 2 \leq p \leq C q^\sharp_H(\mu) ~.
\]
Moreover, the same bound remains valid for $2 \leq p \leq C q^\sharp_{GH}(\mu)$.
% Obviously it continues to hold for any C >= 1 since Z_p \subset Z_q, and the dependence on C is obvious....
% So no loss in generality or ambiguity when stating this with a constant C (big or small).
\end{thm}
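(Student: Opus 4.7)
The statement is essentially a direct assembly of already established Propositions, so the plan is to execute three steps in the order they were set up in Sections \ref{sec3}--\ref{sec5}.

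First, I would apply Proposition \ref{prop:vr-formula} to reduce the claim to the quantitative assertion
\[
\inf_{x \in \frac{1}{2}\Lambda_p(\mu)} \detcov(\mu_x)^{\frac{1}{2n}} \;\gtrsim\; 1
\]
for $p$ in the specified range. Second, for every $x \in \frac{1}{2}\Lambda_p(\mu)$, the tilt $\mu_x$ is a log-concave probability measure with barycenter at the origin and, by Proposition \ref{prop:Zp-iso}, satisfies $Z_p(\mu_x) \simeq Z_p(\mu)$ with universal constants. Since $\mu$ itself is isotropic, the pair $(\nu, \mu) := (\mu_x, \mu)$ meets all hypotheses of Proposition \ref{prop:Her}. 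Third, choose the constant $C$ in the theorem to be the constant $A$ from Proposition \ref{prop:Her}; then for $2 \leq p \leq C q^\sharp_H(\mu)$ (truncating at $n$ when necessary, which is harmless since Proposition \ref{prop:vr-formula} only requires $p \leq n$), Proposition \ref{prop:Her} gives $\detcov(\mu_x)^{1/(2n)} \geq c/C$ uniformly in $x$. Substituting back into the formula from Step 1 yields $\vr(Z_p(\mu)) \gtrsim \sqrt{p}$, which is the first claim.

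For the moreover, I would repeat the same three-step argument but invoke the geometric-mean strengthening noted in Remark \ref{rem:geom-avg}: the proof of Proposition \ref{prop:Her} produces eigenvalue-by-eigenvalue bounds of the form $\sqrt{\lambda_k} \gtrsim \min\bigl(1, q^\sharp(\pi_{E_k}\mu)/p\bigr)$ and concludes by taking a geometric average, so the infimum in the definition of $q^\sharp_H$ can be replaced by a geometric mean without affecting the conclusion. This delivers $\detcov(\mu_x)^{1/(2n)} \gtrsim 1$ throughout the wider range $p \leq C q^\sharp_{GH}(\mu)$, and combining with Step 1 again yields $\vr(Z_p(\mu)) \gtrsim \sqrt{p}$.

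There is no genuine obstacle at this stage, because the heavy lifting has already been done. The only point requiring care is verifying that the lower bound $\detcov(\mu_x)^{1/(2n)} \gtrsim 1$ supplied by Proposition \ref{prop:Her} depends on $\nu = \mu_x$ \emph{only} through the isomorphism constant in $Z_p(\mu_x) \simeq Z_p(\mu)$, which is itself universal (independent of $x$); this is what permits passage from a pointwise bound to the uniform infimum needed in Step 1.
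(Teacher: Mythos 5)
Your argument is correct and coincides with the paper's own proof, which consists precisely of combining Proposition \ref{prop:vr-formula}, Proposition \ref{prop:Zp-iso}, Proposition \ref{prop:Her}, and Remark \ref{rem:geom-avg} in the order you describe. The one small point to tighten is the choice of $C$: taking $C \leq 1$ automatically ensures $p \leq q^\sharp_H(\mu) \leq n$, so no separate truncation is needed before invoking Proposition \ref{prop:vr-formula}.
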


Now if $\mu$ is a log-concave isotropic measure on $\Real^n$ which is in addition a $\psi_\alpha$-measure with constant $b_\alpha$ (for $\alpha \in [1,2]$), by definition:
\[
\diam(Z_p(\mu)) \leq 2 b_\alpha p^{\frac{1}{\alpha}} ~.
\]
It therefore follows immediately from (\ref{eq:q_H-extended}) that:
\[
q^\sharp_H(\mu) \geq \frac{c}{b_\alpha^\alpha} n^{\alpha/2} ~,
\]
and thus Theorem \ref{main_thm2} follows from Theorem \ref{main_thm2+}.

\medskip

Lastly, it may be worthwhile to record the following generalization of Theorems \ref{main_thm} and \ref{main_thm1+}, which follows
immediately, as in Subsection \ref{subsec:slicing}, from Theorem \ref{main_thm2+} and (\ref{eq:qGH}):
\begin{thm}
Let $\mu$ denote a log-concave probability measure in $\Real^n$ with barycenter at the origin. Then:
\[
L_\mu \leq C \brac{\prod_{k=1}^n \frac{k}{\Delta_\mu^{-1}(c^\sharp
\sqrt{k})}}^{\frac{1}{2n}} ~.
\]
\end{thm}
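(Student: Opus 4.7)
The plan is to mirror the reduction from Theorem \ref{main_thm2} to Theorem \ref{main_thm1+} carried out in Subsection \ref{subsec:slicing}, but now using Theorem \ref{main_thm2+} (with the cutoff $q^\sharp_{GH}(\mu)$ in place of the $\psi_\alpha$-bound) and then reshaping the resulting estimate $L_\mu \lesssim \sqrt{n/q^\sharp_{GH}(\mu)}$ into the claimed product via the explicit formula (\ref{eq:qGH}).

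Since $L_\mu$ is affine invariant, I would first reduce to the isotropic case. Under this normalization $\detcov(\mu)^{1/(2n)} = \vr(Z_2(\mu)) = 1$, and the formula (\ref{eq:qGH}) applies directly (the right-hand side of the conclusion should be read for this isotropic representative). Set $p := \min(n, C q^\sharp_{GH}(\mu))$ with $C$ being the constant appearing in Theorem \ref{main_thm2+}, and handle the essentially trivial regime $C q^\sharp_{GH}(\mu) < 2$ separately via the unconditional estimate $L_\mu \lesssim n^{1/4}$. Theorem \ref{main_thm2+} then gives $\vr(Z_p(\mu)) \geq c\sqrt{p}$. Composing with $Z_p(\mu) \subseteq Z_n(\mu)$ and Lemma \ref{lem:Zn}, exactly as in Subsection \ref{subsec:slicing}, one obtains
\[
c\sqrt{p} \leq \vr(Z_p(\mu)) \leq \vr(Z_n(\mu)) \simeq \frac{\sqrt{n}}{\norm{\mu}_{L_\infty}^{1/n}} = \frac{\sqrt{n}}{L_\mu}~,
\]
from which $L_\mu \lesssim \sqrt{n/p} \lesssim \sqrt{n/q^\sharp_{GH}(\mu)}$.

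The remaining step is purely algebraic: from the exact expression for $q^\sharp_{GH}(\mu)$ recorded in (\ref{eq:qGH}) one reads off the identity
\[
\frac{n}{q^\sharp_{GH}(\mu)} = \brac{\prod_{k=1}^n \frac{k}{\Delta_\mu^{-1}(c^\sharp \sqrt{k})}}^{1/n}~,
\]
and taking the square root delivers the claimed bound on $L_\mu$. No genuinely new obstacle arises in this argument: the whole content has already been absorbed into Theorem \ref{main_thm2+} together with the definition of $q^\sharp_{GH}$, and the only minor care required is the handling of the edge case $C q^\sharp_{GH}(\mu) < 2$ mentioned above.
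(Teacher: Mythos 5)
Your proof is correct and follows essentially the same route as the paper's, which simply states that the theorem ``follows immediately, as in Subsection \ref{subsec:slicing}, from Theorem \ref{main_thm2+} and (\ref{eq:qGH})''. You have merely spelled out the reduction (isotropic normalization, the chain $c\sqrt{p}\le\vr(Z_p(\mu))\le\vr(Z_n(\mu))\simeq\sqrt{n}/L_\mu$, and the algebraic rewriting of $\sqrt{n/q^\sharp_{GH}(\mu)}$ via (\ref{eq:qGH})), plus the harmless edge case $Cq^\sharp_{GH}(\mu)<2$, which the paper leaves implicit.
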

\noindent
Observe that in this formulation, we only require an \emph{on-average} control over the growth of $\Delta_\mu(p) = \diam(Z_p(\mu))$, as opposed to all previously mentioned bounds on $L_\mu$.

%%%%%%%%%%%%%%%%%%%%%%%%%%%%%%%%%%%%%%%%%%%%%%%%%%%%%%%%%%%%%%%%%%%%%%%%%%%%%%%%%%%%%%%%%%%%%%%%%%%%%%%%%%%%%%%%%%%%%%%%%%%%%%%%%%%%%%%%%%%%%%%%%%%%%%%%%%%%%%%%
%%%%%%%%%%%%%%%%%%%%%%%%%%%%%%%%%%%%%%%%%%%%%%%%%%%%%%%%%%%%%%%%%%%%%%%%%%%%%%%%%%%%%%%%%%%%%%%%%%%%%%%%%%%%%%%%%%%%%%%%%%%%%%%%%%%%%%%%%%%%%%%%%%%%%%%%%%%%%%%%
%%%%%%%%%%%%%%%%%%%%%%%%%%%%%%%%%%%%%%%%%%%%%%%%%%%%%%%%%%%%%%%%%%%%%%%%%%%%%%%%%%%%%%%%%%%%%%%%%%%%%%%%%%%%%%%%%%%%%%%%%%%%%%%%%%%%%%%%%%%%%%%%%%%%%%%%%%%%%%%%
%%%%%%%%%%%%%%%%%%%%%%%%%%%%%%%%%%%%%%%%%%%%%%%%%%%%%%%%%%%%%%%%%%%%%%%%%%%%%%%%%%%%%%%%%%%%%%%%%%%%%%%%%%%%%%%%%%%%%%%%%%%%%%%%%%%%%%%%%%%%%%%%%%%%%%%%%%%%%%%%
%%%%%%%%%%%%%%%%%%%%%%%%%%%%%%%%%%%%%%%%%%%%%%%%%%%%%%%%%%%%%%%%%%%%%%%%%%%%%%%%%%%%%%%%%%%%%%%%%%%%%%%%%%%%%%%%%%%%%%%%%%%%%%%%%%%%%%%%%%%%%%%%%%%%%%%%%%%%%%%%
%%%%%%%%%%%%%%%%%%%%%%%%%%%%%%%%%%%%%%%%%%%%%%%%%%%%%%%%%%%%%%%%%%%%%%%%%%%%%%%%%%%%%%%%%%%%%%%%%%%%%%%%%%%%%%%%%%%%%%%%%%%%%%%%%%%%%%%%%%%%%%%%%%%%%%%%%%%%%%%%
%%%%%%%%%%%%%%%%%%%%%%%%%%%%%%%%%%%%%%%%%%%%%%%%%%%%%%%%%%%%%%%%%%%%%%%%%%%%%%%%%%%%%%%%%%%%%%%%%%%%%%%%%%%%%%%%%%%%%%%%%%%%%%%%%%%%%%%%%%%%%%%%%%%%%%%%%%%%%%%%
%%%%%%%%%%%%%%%%%%%%%%%%%%%%%%%%%%%%%%%%%%%%%%%%%%%%%%%%%%%%%%%%%%%%%%%%%%%%%%%%%%%%%%%%%%%%%%%%%%%%%%%%%%%%%%%%%%%%%%%%%%%%%%%%%%%%%%%%%%%%%%%%%%%%%%%%%%%%%%%%

\section{Equivalence to the Slicing Problem} \label{sec_counter} \label{sec6}

Denote:
\begin{equation} \label{eq_2246}
 L_n := \sup_{K \subseteq \RR^n} L_K  ~,
\end{equation}
where the supremum runs over all convex bodies $K \subset \RR^n$.
Recall that $K$ is called isotropic if $\mu_K$, the uniform measure
on $K$, is isotropic. Recall also  that $Z_p(K) = Z_p(\mu_K)$. In
this section, we observe that removing the logarithmic factor in
Theorem \ref{main_thm3} is in fact equivalent to Bourgain's
hyperplane conjecture.

\begin{thm} \label{thm:SlicingEquiv}
Given $n \geq 1$, the following statements are equivalent:
\begin{enumerate}
\item There exists $A>0$ so that $L_n \leq A$.
\item There exists $B>0$ so that for any isotropic convex body $K \subset \Real^n$, we have:
\begin{equation} \label{eq:SlicingEquivAssumption}
\vr(Z_p(K)) \geq \sqrt{p} / B \quad \quad \forall 1 \leq p \leq q^{\sharp}(\mu_K) / B ~.
\end{equation}
\end{enumerate}
The equivalence is in the sense that the parameters $A,B$ above depend solely one on the other, and not on the dimension $n$.
\end{thm}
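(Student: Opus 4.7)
The direction $(1) \Rightarrow (2)$ is immediate from the extended Lutwak--Yang--Zhang lower bound of Remark \ref{rem:LYZ-2}. If $L_n \leq A$, then any isotropic convex body $K \subset \Real^n$ satisfies $\vr(Z_p(K)) \geq c\sqrt{p}/L_K \geq c\sqrt{p}/A$ for all $1 \leq p \leq n$, which gives $(2)$ with $B = A/c$; notably, the restriction on the range of $p$ is not required for this implication.

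For $(2) \Rightarrow (1)$, I would start from the chain of inequalities used in the Introduction to reduce Theorem \ref{main_thm} to Theorem \ref{main_thm2}. For isotropic $K \subset \Real^n$ one has $\vr(Z_p(K)) \leq \vr(Z_\infty(K)) \leq C\sqrt{n}/L_K$ via Rogers--Shephard, so plugging in the lower bound provided by $(2)$ at the top of its admissible range $p = \lfloor q^\sharp(\mu_K)/B \rfloor$ yields the preliminary estimate $L_K \lesssim B^{3/2}\sqrt{n/q^\sharp(\mu_K)}$. This is dimension-free precisely when $q^\sharp(\mu_K) \gtrsim n$, so the core of the argument is to show that the hypothesis $(2)$ forces this lower bound on $q^\sharp(\mu_K)$ with a constant depending only on $B$.

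To achieve this, the plan is to revisit the proof of Proposition \ref{prop:log}, where the $\sqrt{\log p}$ factor was introduced via the trivial estimate $\vr(Z_2(\pi_{E_{k_0}}\nu)) \gtrsim \vr(Z_p(\pi_{E_{k_0}}\nu))/p$. If one first extends $(2)$ from convex bodies to log-concave measures by the Ball-body reduction of Subsection \ref{subsec:slicing} (at the cost of a universal factor in $B$), and then combines the extension with Paouris' matching upper bound (Proposition \ref{prop:Paouris}) and Corollary \ref{cor:q-sharp-proj}, one obtains $\vr(Z_p(\pi_E\mu_K)) \simeq \sqrt{p}\,\vr(Z_2(\pi_E\mu_K))$ on all projections of sufficiently large dimension. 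This rigidity allows the trivial step above to be replaced by the dimension-free bound $\vr(Z_2) \gtrsim \vr(Z_p)/\sqrt{p}$, which is exactly what removes the $\sqrt{\log p}$ factor from the hypothesis of Proposition \ref{prop:log}. Inserting the improved proposition into Proposition \ref{prop:vr-formula} at $p = n$, together with Lemma \ref{lem:Zn}, then yields $L_K \lesssim 1/c(B)$, as desired.

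The main obstacle is aligning the ranges of $p$ and the chain of $Z_p$-equivalences so that the improved Proposition \ref{prop:log} can be applied at the endpoint $p = n$ demanded by Proposition \ref{prop:vr-formula}, rather than only at the smaller values of $p$ where $(2)$ operates directly; this is where Proposition \ref{prop:Zp-iso} and the hereditary behaviour of $q^\sharp$ under projections become essential. Heuristically, the equivalence asserts that the $\sqrt{\log p}$ factor in Theorem \ref{main_thm3} is the sole logarithmic loss separating our approach from the full slicing conjecture.
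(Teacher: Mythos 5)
The direction $(1) \Rightarrow (2)$ in your proposal is fine and essentially matches the paper (the paper cites the Lutwak--Yang--Zhang bound (\ref{eq:LYZ-bound}) directly rather than Remark \ref{rem:LYZ-2}, but the argument is the same).

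The direction $(2) \Rightarrow (1)$, however, has a fundamental gap. You write that ``the core of the argument is to show that the hypothesis $(2)$ forces this lower bound on $q^\sharp(\mu_K)$ with a constant depending only on $B$.'' But hypothesis $(2)$ does \emph{not} force $q^\sharp(\mu_K) \gtrsim n$ for every isotropic $K$, and cannot: if $q^\sharp(\mu_K)$ is small, then $(2)$ merely asserts $\vr(Z_p(K)) \geq \sqrt{p}/B$ on a short range of $p$'s, which is nearly vacuous and places no constraint whatsoever on $q^\sharp(\mu_K)$ (nor on $L_K$). Your subsequent plan---to remove the $\sqrt{\log p}$ factor from Proposition \ref{prop:log} and apply the improved statement at $p=n$---runs into exactly the same obstruction: any version of Proposition \ref{prop:log} still requires $\diam(Z_p(\mu)) \lesssim \sqrt{n}$ at the value of $p$ being used, and at $p = n$ that requirement \emph{is} $q^\sharp(\mu_K) \simeq n$, which is precisely what you have not established. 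In short, there is no way to deduce $(1)$ from $(2)$ by a pointwise argument applied to an arbitrary isotropic body; one must exhibit a body where $(2)$ actually bites.

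The paper's proof supplies precisely the missing ingredient. Via Corollary \ref{cor_2355}, one constructs a \emph{specific} centrally-symmetric isotropic body $K = T_{\lambda_0} = K_{\lfloor \lambda_0 n\rfloor} \times D_{\lceil (1-\lambda_0)n\rceil}$, a cartesian product of a near-extremal body (with $L_{K_m}\gtrsim L_m$ and $K_m \subset 10\sqrt{m}\,B_m$) with a round ball. Lemma \ref{lem_2335} and Corollary \ref{cor_2344} show that the ball factor forces $\diam(Z_{\lambda_0 n}(T_{\lambda_0})) \lesssim \sqrt{\lambda_0 n}$, so for a suitably small $\lambda_0$ one has $q^\sharp(K)\geq cn$, while by (\ref{eq:same-LK}) (and the techniques of \cite[Section 3]{BKM}) one still has $\log L_K \gtrsim \log L_n$. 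Applying $(2)$ to this one body and chaining with $\vr(Z_p(K)) \leq \vr(K) \simeq \sqrt{n}/L_K$ gives $L_K \leq C B^{3/2}$, hence $L_n \leq (L_K)^C \leq C_1 B^{C_2}$. This construction is the essential content of the theorem; without it, the argument does not close.
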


The proof is based on the following construction from Bourgain,
Klartag and Milman \cite{BKM}. Given $m \geq 1$, let $K_m$ denote an
isotropic convex body with $L_{K_m} \geq c L_m$. Choosing $c>0$
appropriately, it is well-known (see, e.g., the last remark in
\cite{K_jfa}) that we may assume that $K_m$ is centrally-symmetric
and satisfies $K_m \subset 10 \sqrt{m} B_m$. We also set $D_m :=
\sqrt{m+2} B_m$, and note that $D_m$ is isotropic. Given $1/n \leq
\lambda < 1$, consider the cartesian product:
$$
T_{\lambda} = K_{\lfloor \lambda n \rfloor} \times D_{\lceil (1-\lambda) n \rceil} \subseteq \RR^n ~.
$$
Clearly, $T_{\lambda}$ is a centrally-symmetric isotropic convex body, and since $L_{D_m} \simeq 1$, it follows that:
\begin{equation} \label{eq:same-LK}
L_{T_{\lambda}} \simeq L_{\lfloor \lambda n \rfloor}^{\lfloor
\lambda n \rfloor / n} \simeq L_{\lfloor \lambda n \rfloor}^\lambda
~.
\end{equation}

\begin{lem} \label{lem_2335}
For any pair of centrally-symmetric convex bodies $K_1 \subset \RR^{n_1}, K_2 \subset \RR^{n_2}$ and $p \geq 1$, we have:
\[
\frac{1}{2} (Z_p(K_1) \times Z_p(K_2)) \subset Z_p(K_1 \times K_2) \subset Z_p(K_1) \times Z_p(K_2) ~.
\]
\end{lem}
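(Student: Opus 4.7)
The plan is to work at the level of support functions, using that for the Cartesian product of convex sets $A \times B$ one has $h_{A \times B}(\theta_1,\theta_2) = h_A(\theta_1) + h_B(\theta_2)$. Thus both inclusions reduce to comparing $h_{Z_p(K_1 \times K_2)}(\theta_1,\theta_2)$ with $h_{Z_p(K_1)}(\theta_1) + h_{Z_p(K_2)}(\theta_2)$, and by Fubini the former is simply $\|Y_1 + Y_2\|_{L_p}$, where $Y_1 = \scalar{X_1,\theta_1}$ and $Y_2 = \scalar{X_2,\theta_2}$ with $X_i$ independent and uniform on $K_i$.

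For the right-hand inclusion $Z_p(K_1 \times K_2) \subset Z_p(K_1) \times Z_p(K_2)$, I would apply the triangle inequality (Minkowski) in $L_p$ to get $\|Y_1 + Y_2\|_p \leq \|Y_1\|_p + \|Y_2\|_p$, which is exactly the desired comparison of support functions. This step is straightforward and does not even use central symmetry.

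For the left-hand inclusion $\tfrac{1}{2}(Z_p(K_1) \times Z_p(K_2)) \subset Z_p(K_1 \times K_2)$, I need $\|Y_1\|_p + \|Y_2\|_p \leq 2\|Y_1+Y_2\|_p$. The key observation is that because $K_1, K_2$ are centrally symmetric, each $Y_i$ has mean zero, so by conditioning on $Y_2$ and applying Jensen's inequality to the convex map $t \mapsto |t+Y_2|^p$,
\[
\mathbb{E}|Y_1+Y_2|^p = \mathbb{E}_{Y_2}\mathbb{E}_{Y_1}|Y_1+Y_2|^p \geq \mathbb{E}_{Y_2}|\mathbb{E}_{Y_1} Y_1 + Y_2|^p = \mathbb{E}|Y_2|^p,
\]
so $\|Y_1+Y_2\|_p \geq \|Y_2\|_p$, and by symmetry $\|Y_1+Y_2\|_p \geq \|Y_1\|_p$, hence $\|Y_1+Y_2\|_p \geq \max(\|Y_1\|_p,\|Y_2\|_p) \geq \tfrac{1}{2}(\|Y_1\|_p+\|Y_2\|_p)$.

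There is no real obstacle here; the only subtlety is keeping track of normalizations (the uniform probability measure on $K_1 \times K_2$ is the product of the uniform probability measures on the factors, so Fubini applies directly), and noticing that the mean-zero property, which is what makes the Jensen step work, comes exactly from central symmetry of the $K_i$. Combining the two inequalities at the level of support functions yields the stated double inclusion.
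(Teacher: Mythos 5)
Your proof is correct, and it takes a genuinely different route from the paper's. The paper argues geometrically: it identifies the slices $Z_p(K_1 \times K_2) \cap (\RR^{n_1} \times \{0\}) = Z_p(K_1) \times \{0\}$ and $Z_p(K_1 \times K_2) \cap (\{0\} \times \RR^{n_2}) = \{0\} \times Z_p(K_2)$, then uses the unconditionality of $Z_p(K_1 \times K_2)$ with respect to the coordinate splitting (coming from the central symmetry of $K_1, K_2$) together with convexity to get the right-hand inclusion, and the observation that the convex hull of $A \times \{0\}$ and $\{0\} \times B$ contains $\frac{1}{2}(A \times B)$ for the left-hand one. You instead translate both inclusions into inequalities between support functions, writing $h_{Z_p(K_1 \times K_2)}(\theta_1,\theta_2) = \|Y_1 + Y_2\|_{L_p}$ with $Y_1, Y_2$ independent, and then the right-hand inclusion is Minkowski's inequality in $L_p$ and the left-hand one is a conditional Jensen argument giving $\|Y_1 + Y_2\|_p \geq \max(\|Y_1\|_p, \|Y_2\|_p)$. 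Your version has two small advantages worth noting: it makes explicit that the right-hand inclusion holds with no symmetry assumption at all, and that the left-hand inclusion really only needs each $Y_i$ to have mean zero (i.e.\ each $K_i$ centered at the origin), which is weaker than full central symmetry. Conversely, the paper's geometric argument requires no appeal to Jensen or Minkowski and is arguably faster once the unconditionality is noticed. Both are complete proofs of comparable length.
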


\begin{proof}
Denote $E_1 := \Real^{n_1} \times \set{0}$ and $E_2 := \set{0}
\times \Real^{n_2}$. By definition, $Z_p(K_1 \times K_2) \cap E_1 =
Z_p(K_1) \times \set{0}$ and $Z_p(K_1 \times K_2) \cap E_2  =
\set{0} \times Z_p(K_2)$. By the symmetries of $K_1,K_2$ and
convexity of $Z_p(K_1 \times K_2)$, it follows that:
\[
Z_p(K_1 \times K_2) \subseteq Z_p(K_1) \times Z_p(K_2) ~.
\]
On the other hand, an elementary argument ensures that:
\[
Z_p(K_1 \times K_2) \supseteq conv(Z_p(K_1) \times \set{0} , \set{0}
\times Z_p(K_2)) \supseteq \frac{1}{2} \brac{ Z_p(K_1) \times
Z_p(K_2) } ~.
\]
\end{proof}

%\begin{proof}
%Denote $\Lambda_K := \Lambda_{\mu_K}$ and $\Lambda_p(K) := \Lambda_p(\mu_K)$.
%In view of Lemma \ref{lem:Lambda-Zp}, and since the unit-ball of $\ell_\infty$ is isomorphic to that of $\ell_1$ in
% $\Real^2$, showing the assertion is equivalent to proving:
%\begin{equation}  \label{eq_2313}
%\Lambda_p(K_1 \times K_2) \simeq \Lambda_p(K_1) \times \Lambda_p(K_2)  ~.
%\end{equation}
%Clearly
%$\Lambda_{K_1 \times K_2}(\xi_1, \xi_2) = \Lambda_{K_1}(\xi_1) +
%\Lambda_{K_2}(\xi_2)$, for any $(\xi_1, \xi_2) \in \RR^{n_1} \times
%\RR^{n_2}$. Since in addition $\Lambda_{K_i} \geq 0$, we conclude that:
%$$
%\Lambda_{p/2}(K_1) \times \Lambda_{p/2}(K_2) \subseteq \Lambda_p(K_1 \times K_2)
%\subseteq \Lambda_{p}(K_1) \times \Lambda_{p}(K_2) ~.
%$$
%As $\Lambda_{p}(\mu) / 2 \subseteq \Lambda_{p/2}(\mu)$ for any probability measure $\mu$, we obtain (\ref{eq_2313}), as required.
%\end{proof}

\begin{cor} \label{cor_2344}
For any $1/n \leq \lambda \leq 1/2$:
$$
\diam(Z_{\lambda n}(T_{\lambda})) \leq C \sqrt{\lambda n} ~.
$$
\end{cor}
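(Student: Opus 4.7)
The natural approach is to exploit the product structure $T_\lambda = K_{\lfloor \lambda n\rfloor} \times D_{\lceil (1-\lambda)n\rceil}$ and estimate the $Z_p$-body of each factor separately. By Lemma \ref{lem_2335} applied to the two centrally-symmetric factors, we have for $p = \lambda n$:
\[
Z_p(T_\lambda) \subseteq Z_p(K_{\lfloor \lambda n\rfloor}) \times Z_p(D_{\lceil (1-\lambda)n\rceil}) ~,
\]
so that $\diam^2(Z_p(T_\lambda)) \leq \diam^2(Z_p(K_{\lfloor \lambda n\rfloor})) + \diam^2(Z_p(D_{\lceil (1-\lambda)n\rceil}))$. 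It therefore suffices to bound each of these two diameters by $C\sqrt{\lambda n}$.

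For the first factor, I would use the trivial inclusion $Z_p(K) \subseteq Z_\infty(K) = conv(K,-K)$ from (\ref{eq:ZnK}). Since $K_{\lfloor \lambda n\rfloor}$ is centrally-symmetric, $conv(K_{\lfloor \lambda n\rfloor}, -K_{\lfloor \lambda n\rfloor}) = K_{\lfloor \lambda n\rfloor}$, and by the construction from \cite{BKM} recalled above, $K_{\lfloor \lambda n\rfloor} \subseteq 10 \sqrt{\lfloor \lambda n\rfloor}\, B_{\lfloor \lambda n\rfloor}$. Hence $\diam(Z_p(K_{\lfloor \lambda n\rfloor})) \leq 20\sqrt{\lambda n}$, regardless of $p$.

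For the second factor, I would use that $D_{m'} = \sqrt{m'+2}\, B_{m'}$ (with $m' := \lceil(1-\lambda)n\rceil$) is isotropic and rotationally-invariant, so $Z_p(D_{m'})$ is a Euclidean ball whose radius is $h_{Z_p(D_{m'})}(e_1)$, i.e. the $L^p$-norm of a one-dimensional marginal of the isotropic uniform measure on a Euclidean ball. Such a marginal is a log-concave density on $\RR$ with variance $1$ and bounded $\psi_2$-constant (a direct calculation, or via Borell's lemma / Berwald's inequality applied to the explicit density $\propto (1-t^2/(m'+2))^{(m'-1)/2}$). This yields $h_{Z_p(D_{m'})}(e_1) \leq C\sqrt{p}$ for all $p \geq 2$, and in particular $\diam(Z_{\lambda n}(D_{m'})) \leq 2C\sqrt{\lambda n}$.

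Combining the two estimates through the displayed product inclusion finishes the proof. There is no real obstacle here; the only point requiring slight care is the $\psi_2$ estimate for the marginal of the Euclidean ball, but this is standard and uniform in the dimension $m'$, which is what allows the bound to be dimension-free beyond the factor $\sqrt{\lambda n}$.
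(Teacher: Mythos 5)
Your proof is correct and follows essentially the same route as the paper: split $Z_{\lambda n}(T_\lambda)$ via the product Lemma \ref{lem_2335}, control the $K$-factor by the trivial inclusion $Z_p(K_{\lfloor\lambda n\rfloor}) \subseteq K_{\lfloor\lambda n\rfloor} \subseteq 10\sqrt{\lfloor\lambda n\rfloor}\,B_{\lfloor\lambda n\rfloor}$, and control the $D$-factor by showing $Z_{\lambda n}(D_{m'})$ is a Euclidean ball of radius $\lesssim \sqrt{\lambda n}$. One small caveat: Berwald's inequality and Borell's lemma classically give only a $\psi_1$ estimate, so for the uniform-in-$m'$ $\psi_2$ bound on the ball's one-dimensional marginal you should rely on the direct Gaussian-domination calculation you also mention, e.g.\ $(1-t^2/(m'+2))^{(m'-1)/2} \leq e^{-ct^2}$ uniformly for $m' \geq 2$.
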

\begin{proof}
By Lemma \ref{lem_2335} we see that:
$$
\diam(Z_{\lambda n}(T_{\lambda})) \leq \diam(Z_{\lambda n}(
K_{\lfloor \lambda n \rfloor}))  + \diam(Z_{\lambda n}(D_{\lceil
(1-\lambda) n \rceil})) ~.
$$
Observe that $\diam(Z_{\lambda n}(K_{\lfloor \lambda n \rfloor})) \leq \diam(K_{\lfloor \lambda n \rfloor}) \leq 20 \sqrt{\lambda n}$. As for the other
summand, a straightforward computation reveals that when $1/n \leq \lambda \leq 1/2$:
$$
Z_{\lambda n}(D_{\lceil (1-\lambda) n \rceil}) \simeq \sqrt{\lambda} \sqrt{n} B_{\lceil (1-\lambda) n \rceil} ~.
$$
The assertion now follows.
\end{proof}

Recall that for any isotropic convex body $K \subset \RR^n$:
\begin{equation}
 q^{\sharp}(K) = q^\sharp(\mu_K) := \sup \set{q \geq 1 ; \diam(Z_q(K)) \leq c^\sharp \sqrt{n}} ~, \label{eq_2351}
\end{equation}
where $c^\sharp > 0$ is an appropriate universal constant (as in
Section \ref{sec4}).

\begin{cor} \label{cor_2355}
For any $n \geq 1$, there exists a centrally-symmetric isotropic convex body $K \subset \RR^n$, such that:
\begin{enumerate}
\item[(a)] $\displaystyle q^{\sharp}(K) \geq c n$; and
\item[(b)] $\displaystyle \log L_K \geq c \log L_n$,
\end{enumerate}
where $c > 0$ is a universal constant.
\end{cor}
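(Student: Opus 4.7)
The plan is to take $K := T_{\lambda_0}$ from the preceding discussion, with $\lambda_0 \in (0,1/2]$ a sufficiently small fixed universal constant, chosen so that $C\sqrt{\lambda_0} \leq c^\sharp$, where $C$ is the constant in Corollary \ref{cor_2344} and $c^\sharp$ is the constant in \eqref{eq_2351}. Since $K_m$ and $D_m$ are centrally-symmetric isotropic convex bodies, so is their product $K = K_{\lfloor \lambda_0 n \rfloor} \times D_{\lceil (1-\lambda_0) n \rceil} \subset \RR^n$.

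For part (a), Corollary \ref{cor_2344} yields $\diam(Z_{\lfloor \lambda_0 n \rfloor}(K)) \leq C\sqrt{\lambda_0 n} \leq c^\sharp \sqrt{n}$; the definition \eqref{eq_2351} then gives $q^\sharp(K) \geq \lfloor \lambda_0 n \rfloor \geq c\, n$ with $c := \lambda_0/3$ (for $n$ not too small; small $n$ is handled directly by taking $K = B_n$). For part (b), \eqref{eq:same-LK} yields $\log L_K \simeq \lambda_0 \log L_{\lfloor \lambda_0 n \rfloor}$, so the bound will follow once one establishes that $\log L_{\lfloor \lambda_0 n \rfloor} \gtrsim \log L_n$ up to universal constants.

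This last step is the \emph{main obstacle} of the argument: it is a form of approximate multiplicative monotonicity for the sequence $(L_n)_n$, namely the existence of a universal $\beta \in (0,1)$ such that $L_{\lfloor \beta n \rfloor} \geq c'\, L_n$ for every $n$, with $c' > 0$ universal. Granting such a bound (which is implicit in standard section/projection inequalities for isotropic convex bodies), a bounded number of iterations---namely $\lceil \log \lambda_0 / \log \beta \rceil$ many---brings one down from dimension $n$ to dimension $\lfloor \lambda_0 n \rfloor$, with the resulting constant depending only on $\lambda_0$ and $\beta$. Consequently $\log L_{\lfloor \lambda_0 n \rfloor} \geq \log L_n - O(1)$, and therefore $\log L_K \geq (\lambda_0/2) \log L_n$ provided $\log L_n$ exceeds a fixed absolute constant. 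In the complementary regime where $L_n$ is bounded by a universal constant, one takes $K := B_n$ normalized to be isotropic, for which a direct computation gives $L_K \simeq 1$ and $\diam(Z_p(B_n)) \simeq \sqrt{p}$ for $p$ up to order $n$, so (a) and (b) both hold trivially; combining the two regimes completes the proof.
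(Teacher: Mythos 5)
Your construction $K = T_{\lambda_0}$ with $\lambda_0$ chosen so that $C\sqrt{\lambda_0} \leq c^\sharp$, and the verification of part (a) via Corollary \ref{cor_2344} and the monotonicity $Z_{\lfloor \lambda_0 n\rfloor}(K) \subseteq Z_{\lambda_0 n}(K)$, match the paper exactly. Likewise your reduction of part (b), via \eqref{eq:same-LK}, to the key inequality $L_{\lfloor \lambda_0 n \rfloor} \gtrsim L_n$ is exactly the paper's route, and you correctly single this out as the crux.

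Where the argument has a real gap is precisely at that crux. You assert that the inequality $L_{\lfloor \beta n \rfloor} \geq c' L_n$ is ``implicit in standard section/projection inequalities for isotropic convex bodies.'' That is not a valid justification: there is in general no elementary relation bounding the isotropic constant of a section or projection of an isotropic body from below by $L_K$, and the statement that the supremum $L_m$ over dimension $m \simeq \lambda n$ dominates $L_n$ up to constants is a genuinely nontrivial fact. The paper attributes it to the specific techniques of \cite[Section 3]{BKM} (which construct, from an extremal body in dimension $n$, a lower-dimensional body with comparably large isotropic constant). Without that reference, or a proof, this step is unsupported. Your iteration scheme ($\lceil \log\lambda_0 / \log\beta \rceil$ applications of the inequality at scale $\beta$) is correct in form but does not help, since the hypothesis at scale $\beta$ close to $1$ is not any more ``standard'' than at scale $\lambda_0$; it still rests on the same unproved ingredient. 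Your fallback to $K = B_n$ for small $n$ and for bounded $L_n$ is a legitimate alternative to the paper's one-line appeal to $L_K \geq c > 0$, but it does not affect the main issue. In short: same approach as the paper, correct structure, but the central inequality $L_{\lfloor \lambda n\rfloor}\gtrsim L_n$ must be attributed to \cite[Section 3]{BKM} rather than waved off as standard.
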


\begin{proof}
Take $\lambda_0 := \min \{ (c^\sharp / C)^2,1/2 \}$, where $C$ is the
constant from Corollary \ref{cor_2344}. Then $K = T_{\lambda_0}$
satisfies the first assertion in view of the choice of $\lambda_0$, and by
(\ref{eq:same-LK}):
$$
L_K \simeq L_{\lfloor \lambda_0 n \rfloor}^{\lambda_0} \gtrsim
 L_n^{\lambda_0} ~,
$$
where the inequality $L_{\lfloor \lambda n \rfloor} \gtrsim L_n$ for
any $0 < c \leq \lambda \leq 1$ follows from the techniques in
\cite[Section 3]{BKM}. Since $L_K \geq c  > 0$, the second assertion
follows.
\end{proof}

\begin{proof}[Proof of Theorem \ref{thm:SlicingEquiv}]
If $L_n \leq A$, then $\vol(K)^{\frac{1}{n}} \geq 1/A$ for any isotropic convex body $K \subset \Real^n$. Consequently, by the Lutwak--Yang--Zhang lower-bound (\ref{eq:LYZ-bound}), we even have:
\[
\vr(Z_p(K)) \geq \frac{c}{A} \sqrt{p} \quad \quad \forall 1 \leq p \leq n ~.
\]

For the other direction, apply our assumption
(\ref{eq:SlicingEquivAssumption}) to the isotropic convex body $K
\subset \Real^n$ from Corollary \ref{cor_2355}, and obtain:
\[
 \frac{\sqrt{p}}{B} \leq \vr(Z_p(K)) \leq \vr(K) \simeq \frac{\sqrt{n}}{L_K} \quad \quad \forall 1 \leq p \leq q^\sharp(K) / B ~.
\]
Corollary \ref{cor_2355} then implies that:
\[
L_n \leq (L_K)^C \leq \brac{C' B^{\frac{3}{2}} \sqrt \frac{n}{q^\sharp(K)}}^C \leq C_1 B^{C_2} ~,
\]
as required.
\end{proof}

%Similarly, one may construct an example where $q^\sharp_H \ll q^\sharp$.

%\setlinespacing{0.8}
%\setlength{\bibspacing}{0pt}

%\bibliographystyle{plain}
%\bibliography{../ConvexBib}

\begin{thebibliography}{99}
\def\cprime{$'$}

\bibitem{BallPhD}
K.~Ball.
\newblock PhD thesis, Cambridge, 1986.

\bibitem{Ball-kdim-sections}
K.~Ball.
\newblock Logarithmically concave functions and sections of convex sets in
  $\mathbb{R}^n$.
\newblock {\em Studia Math.}, 88(1):69--84, 1988.


\bibitem{ber} L. Berwald.
\newblock{Verallgemeinerung eines Mittelwertsatzes von J. Favard,
 f\"ur positive konkave Funktionen}.
 \newblock{\em Acta Math.}, 79:17-–37, 1947.

\bibitem{bobkov} S. G. Bobkov. \newblock{On concentration of distributions of random weighted
sums.} \newblock{\em Ann. Prob.}, 31(1):195--215, 2003.

\bibitem{Borell-logconcave}
Ch. Borell.
\newblock Convex measures on locally convex spaces.
\newblock {\em Ark. Mat.}, 12:239--252, 1974.


\bibitem{bou_amer} J. Bourgain. \newblock{On high-dimensional maximal
functions associated to convex bodies.}  \newblock{\em Amer. J.
Math.}, 108(6):1467--1476, 1986.

\bibitem{bou_cong} J. Bourgain. \newblock{Geometry of Banach spaces and harmonic analysis.}
\newblock{ \em Proceedings of the International Congress of Mathematicians,
(Berkeley, Calif., 1986), Amer. Math. Soc., Providence, RI, }
871--878, 1987.


\bibitem{bou_L} J. Bourgain. \newblock{ On the distribution of polynomials on
high dimensional convex sets.}
\newblock In {\em Geometric Aspects of Functional Analysis (Israel Seminar, 1989--90)}, volume 1469 of
  {\em Lecture Notes in Mathematics}, pages 127--137. Springer, 1991.




\bibitem{bou_psi2} J. Bourgain.  \newblock{ On the isotropy-constant problem
for ``Psi-$2$'' bodies.}
\newblock In {\em Geometric Aspects of Functional Analysis (Israel Seminar, 2001--02)}, volume 1807 of
  {\em Lecture Notes in Mathematics}, pages 114--121. Springer, 2002.



\bibitem{BKM}
J.~Bourgain, B.~Klartag and V.~Milman.
\newblock Symmetrization and isotropic constants of convex bodies.
\newblock In {\em Geometric Aspects of Functional Analysis (Israel Seminar, 2002--03)}, volume 1850 of
  {\em Lecture Notes in Mathematics}, pages 101--116. Springer, 2004.


\bibitem{DP} N.~Dafnis and G.~Paouris. \newblock{Small ball probability estimates, $\psi_2$-behavior and the hyperplane conjecture.}
\newblock{\em J. of Funct. Anal.}, 258:1933--1964, 2010.

\bibitem{fradelizi} M.~Fradelizi. \newblock{Sections of convex bodies through their
centroid.} \newblock{\em Arch. Math.}, 69(6):515--522, 1997.

\bibitem{LYZ} E. Lutwak, D. Yang and G. Zhang. \newblock{$L^p$ affine isoperimetric inequalities.} \newblock {\em J. Diff. Geom.},  56:111–-132, 2000.


\bibitem{K_jfa} B.~Klartag. \newblock{An isomorphic version of the slicing problem.} \newblock{ \em J. Funct. Anal.},
218:372--394, 2005 .


\bibitem{quarter}
B.~Klartag.
\newblock On convex perturbations with a bounded isotropic constant.
\newblock {\em Geom. and Funct. Anal.}, 16(6):1274--1290, 2006.


\bibitem{K_psi2} B. Klartag. \newblock{Uniform almost sub-gaussian estimates for linear functionals
on convex sets.} \newblock{\em Algebra i Analiz (St. Petersburg
Math. Journal)}, 19(1):109--148, 2007.

%\bibitem{K_clt} B. Klartag. \newblock{A central limit theorem for convex sets.}
%\newblock{Invent. Math.}, 168:91--131, 2007.

\bibitem{LW} R. Lata\l a and J. O. Wojtaszczyk. \newblock{
On the infimum convolution inequality. } \newblock{\em Studia Math.}
189(2):147–-187, 2008.

\bibitem{LMS}
A.~E. Litvak, V.~Milman and G.~Schechtman.
\newblock Averages of norms and quasi-norms.
\newblock {\em Mathematische Annalen}, 312:95--124, 1998.

\bibitem{LutwakZhang-IntroduceLqCentroidBodies}
E.~Lutwak and G.~Zhang.
\newblock Blaschke-{S}antal\'o inequalities.
\newblock {\em J. Differential Geom.}, 47(1):1--16, 1997.

\bibitem{Mil71} V. Milman. \newblock{A new proof of A. Dvoretzky's theorem on cross-sections of convex bodies. (Russian)}
\newblock{\em Funkcional. Anal. i Prilo\v zen.},  5(4):28--37, 1971.
 English transl. \newblock{ \em Functional Anal. Appl.} 5:288--295, 1971.

\bibitem{MilmanPajor}
V.~D. Milman and A.~Pajor.
\newblock Isotropic position and interia ellipsoids and zonoids of the unit
  ball of a normed $n$-dimensional space.
\newblock In {\em Geometric Aspects of Functional Analysis (Israel Seminar, 1987--88)}, volume 1376 of
  {\em Lecture Notes in Mathematics}, pages 64--104. Springer, 1991.


\bibitem{Milman-Schechtman-Book}
V.~D. Milman and G.~Schechtman.
\newblock {\em Asymptotic theory of finite-dimensional normed spaces}, volume
  1200 of {\em Lecture Notes in Mathematics}.
\newblock Springer-Verlag, Berlin, 1986.
\newblock With an appendix by M. Gromov.

\bibitem{MilmanSchechtmanSharpDvorDim} V.D. Milman and G.  Schechtman. \newblock{
Global versus local asymptotic theories of finite-dimensional normed spaces.}
\newblock{\em Duke Math. J.} 90(1):73-–93,  1997.

\bibitem{Paouris-Small-Diameter}
G.~Paouris.
\newblock $\psi_2$-estimates for linear functionals on zonoids.
\newblock In {\em Geometric Aspects of Functional Analysis (Israel Seminar, 2001--02)}, volume 1807 of
  {\em Lecture Notes in Mathematics}, pages 211--222. Springer, 2003.


\bibitem{Paouris-IsotropicTail}
G.~Paouris.
\newblock Concentration of mass on convex bodies.
\newblock {\em Geom. Funct. Anal.}, 16(5):1021--1049, 2006.

\bibitem{PaourisSmallBall}
G.~Paouris.
\newblock Small ball probability estimates for log-concave measures.
\newblock To appear in {\em Trans. Amer. Math. Soc.}

\bibitem{Pis} G. Pisier. \newblock{The Volume of Convex Bodies and Banach Space Geometry.} \newblock{ \em Cambridge
Tracts in Mathematics 94}, 1989.

\bibitem{RS} C. A. Rogers and G. C. Shephard. \newblock{ The difference body of a convex body.} \newblock{\em Arch.
Math.} 8:220-–233, 1957.


\bibitem{Schneider-Book}
R.~Schneider.
\newblock {Convex bodies: the {B}runn-{M}inkowski theory}, volume~44 of
  {\em Encyclopedia of Mathematics and its Applications}.
\newblock Cambridge University Press, Cambridge, 1993.


\end{thebibliography}

\end{document}